 \newtheorem{thm}{Theorem}[section]
 \newtheorem{cor}[thm]{Corollary}
 \newtheorem{lem}[thm]{Lemma}
 \theoremstyle{definition}
 \newtheorem{defn}[thm]{Definition}
 \theoremstyle{remark}
 \newtheorem{rema}[thm]{Remark}
 \theoremstyle{example}
 \newtheorem{exmp}[thm]{Example}
 \numberwithin{equation}{section}
\def\rem#1#2{ #1\;\textnormal{rem}\;#2}
\def\res#1#2{ eval\left(#1;\  #2\right)}
\begin{document}

%-------------------------------------------------------------------------
% editorial commands: to be inserted by the editorial office
%
%\firstpage{1} \volume{228} \Copyrightyear{2004} \DOI{003-0001}
%
%
%\seriesextra{Just an add-on}
%\seriesextraline{This is the Concrete Title of this Book\br H.E. R and S.T.C. W, Eds.}
%
% for journals:
%
%\firstpage{1}
%\issuenumber{1}
%\Volumeandyear{1 (2004)}
%\Copyrightyear{2004}
%\DOI{003-xxxx-y}
%\Signet
%\co-mmby{inhouse}
%\submitted{March 14, 2003}
%\received{March 16, 2000}
%\revised{June 1, 2000}
%\accepted{July 22, 2000}
%
%
%
%---------------------------------------------------------------------------
%Insert here the title, affiliations and abstract:
%

\title[]
{\centering{Some Partial Fraction Identities associated \\ with the Cyclotomic Polynomials}}

%----------Author 1
\author[]{\centering{N. Uday Kiran}}

\address{
Department of Mathematics and Computer Science,\\
Sri Sathya Sai Institute of Higher Learning,\\
Prashanthi Nilayam, Puttaparthi,\\
Andhra Pradesh 515134, India.
}
\email{nudaykiran@sssihl.edu.in}
%----------classification, keywords, date
\subjclass{Primary 26C15 Secondary 11P82, 11F20}
\keywords{ Cyclotomic Polynomial, $q$-Partial Fractions, Sylvester Denumerants, Frobenius Number, Ehrhart Polynomials, Fourier-Dedekind Sum, Rademacher Reciprocity Theorem}
%\date{May 31, 2012}
%----------additions
%\dedicatory{Dedicated to Bhagawan Sri Satya Sai Baba}
%%% ----------------------------------------------------------------------

\begin{abstract}
We establish some partial fraction identities for rational functions whose denominators are implicit products of the cyclotomic polynomials. To achieve this, we first develop a general algebraic approach for partial fraction decomposition inspired by the Heaviside's cover-up method. We thus call our method the Extended Cover-Up Method. Using our method we obtain direct formulas for $q$-partial fractions for certain generating functions. As a direct consequence of our formulas one can compute the Sylvester denumerants, the Frobenius number and the Ehrhart polynomials in pseudo-polynomial time. Further, we provide a framework for a generalization of the Fourier-Dedekind sum and their associated Rademacher reciprocity theorem extending the results of Carlitz, Zagier and Gessel. By performing a Fourier analysis we demonstrate that our extended cover-up method explains in simple terms the mechanism behind the reciprocity law.
\end{abstract}

%%% ----------------------------------------------------------------------
\maketitle
%%% ----------------------------------------------------------------------
%\tableofcontents

\section{Introduction}
More than a century ago, Oliver Heaviside suggested an intriguing approach to perform partial fractions of rational functions with denominators having only linear factors. Given distinct numbers $a_{1},\cdots, a_{k}$ and $h(x)$ a polynomial of degree less than $k$, suppose the expression for the partial fraction is 
\begin{equation}\label{heaviside}
h(x)\prod_{j=1}^{k}\frac{1}{x-a_{j}}=\sum_{j=1}^{k}\frac{A_{j}}{x-a_{j}},
\end{equation}
then by the Heaviside's cover-up method we have  
$$
A_{i}=h(a_{i})\prod^{k}_{\substack{j=1 \\ j\neq i}}\frac{1}{a_{i}-a_{j}},
$$
that  is, to obtain $A_{i}$ we evaluate  the left hand side of  (\ref{heaviside}) at $x=a_{i}$ `covering' the $(x-a_{i})$ factor. In this work, we develop an algebraic approach to rigorously extend this method to higher degree polynomials. In order to achieve this, we define a `polynomial valued' $eval$ function (Definition \ref{eval}) that mimics the Heaviside's approach. Indeed, given $p_{1}(x), \cdots, p_{k}(x)$ pairwise relatively prime polynomials we obtain the partial fraction decomposition  
\begin{equation}\label{ECM}
\prod_{j=1}^{k}\frac{1}{p_{j}(x)}=\sum_{j=1}^{k}\frac{1}{p_{j}(x)}\res{\frac{1}{p_{1}(x)\cdots\widehat{p_{j}(x)}\cdots p_{k}(x)}}{p_{j}(x)},
\end{equation}
where $\widehat{   }$ stands for dropping the corresponding term.

Employing the extended cover-up method and exploiting the properties of the cyclotomic polynomials we develop some new and elegant partial fraction identities. Furthermore, equipped with our method, we develop direct formulas for certain partition of numbers and generalize their corresponding Rademacher reciprocity theorems. 

The problem of \textit{partition of a number into finite parts} is concerned with determining the number of ways a given positive number $t$ can be expressed as a linear combination of a given set of positive numbers $\{n_{1},\cdots,n_{k}\}$, i.e., the number of solutions of the equation 
\begin{equation}\label{linear_equation}
\alpha_{1}n_{1}+\cdots+\alpha_{k}n_{k}=t
\end{equation}
for any non-negative integers $\alpha_{1},\cdots,\alpha_{k}$. The number of such solutions is called the Sylvester denumerant and denoted $d(n_1,\cdots,n_k;t)$. It is well known in the literature that the formal power series expansion of the following rational function gives us the denumerants:
\begin{equation}\label{n_k}
\frac{1}{(1-x^{n_1})\dots (1-x^{n_k})}=\sum_{t=0}^{\infty}d(n_{1},\cdots,n_{k};t)x^{t}. 
\end{equation}
Once a partial fraction of (\ref{n_k}) is obtained one can compute the power series efficiently; thus obtain a direct formula for the denumerants. In this work, we assume $n_{1},\cdots,n_{k}$ are pairwise relatively prime. 

Munagi \cite{munagi2} has formulated a non-standard approach to partial fractions, called $q$-partial fractions, the denominators of which are always of the form $(1-x^{r})^{s}$ for some integers $r$ and $s$. Employing the techniques developed in this work, we perform a $q$-partial fraction of 
\begin{equation}\label{qpf_n_k}
F(x)=\frac{p(x)}{(1-x)^{m}}\prod_{j=1}^{k}\frac{1}{1-x^{n_j}}=\frac{g_{0}(x)}{(1-x)^{k+m}}+\sum_{j=1}^{k}\frac{g_{j}(x)}{1-x^{n_j}},
\end{equation}
where $0\leq m, \textnormal{ deg }p(x)<(n_{1}+\cdots+n_{k}+m),\textnormal{ deg }g_{0}(x)<k+m$ and $\textnormal{deg }g_{j}(x)<n_{j}$ for $1\leq j$.  An advantage of this representation is in expressing the formal power series easily with just binomial expansions which will in turn make it convenient to compute the denumerants. 

Many works have addressed the problem of computing the denumerants and its allied computationally intractable problems (see \cite{Alfonsin}). It is well known that the problem of deciding $d(n_{1},\cdots, n_{k}; t)>0$ is NP-Complete and determining the largest $t$ such that $d(n_{1},\cdots, n_{k}; t)=0$ (the Frobenius Problem) is NP-Hard. For the Frobenius Problem a geometric based polynomial time algorithm is provided by Kannan \cite{Kannan} when $k$ is fixed. However, this algorithm takes super-exponential time in $k$. In \cite{Komatsu}, Komatsu provided a formula through trigonometric identities. The formula we derive in this work provides a pseudo-polynomial time algorithm for arbitrary $k$. 

A special feature of our decomposition (\ref{qpf_n_k}) is that the `polynomial part' $g_{0}(x)$ can be expressed  in terms of the Bernoulli numbers and the Stirling numbers of first kind, and the `periodic part' $g_{j}(x)$, for $j\geq 1$, is the inverse Fourier transform of generalizations of the Fourier-Dedekind sums. 

The Fourier-Dedekind sums are considered as the building blocks of number theory and they unify many variations of the Dedekind sums. These sums appear in various contexts besides analytic number theory and discrete geometry (see for instance \cite{Beck,Beck1,Zagier,Tuskerman} and references therein). The Fourier-Dedekind sum is (here we skip a $1/b$ factor for expository purpose)
$$
s_{t}(a_{1},\cdots,a_{k};b )=\sum^{b-1}_{i=1}\frac{\xi^{it}}{(1-\xi^{ia_{1}})\cdots(1-\xi^{ia_{k}})}
$$ 
for $\xi=e^{2\pi i/b}$. They satisfy a nice reciprocity relation 
\begin{equation}\label{FDS_intro}
\textnormal{poly}(-t)=\sum^{k}_{j=1}\frac{1}{n_{j}}s_{t}(n_{1},\cdots,\widehat{n_{j}},\cdots,n_{k};\ n_{j}),
\end{equation}
where again $\widehat{}$ refers to dropping the corresponding term. 

It is well known that in deriving the reciprocity theorems of the Fourier-Dedekind sums the partial fraction decomposition plays a powerful role \cite{Beck, Gessel}. Moreover, one can see a striking resemblance between the right hand sides of (\ref{ECM}) and that of (\ref{FDS_intro}) (compare the occurrences of $p_{j}(x)$ and $n_{j}$). We believe that, through the lens of the extended cover-up, every partial fraction associated with the cyclotomic polynomials is a `potential' reciprocity theorem. A schematic diagram of our framework is given in the figure.
\begin{figure}[h]
\centering 
\includegraphics[scale=0.5]{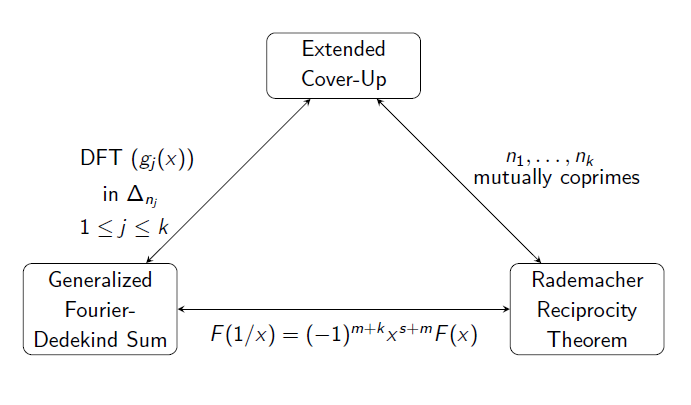}
%\caption{Scheme for Reciprocity Theorem}
\end{figure}
 
In view of the Fourier analysis of the $eval$ function, we generalize the Fourier-Dedekind sums by mainly introducing a polynomial term in the numerator. We prove the associated reciprocity theorems using the extended cover-up method. These reciprocity theorems for the generalized Fourier-Dedekind sums extend the results of Carlitz \cite{Carlitz}, Zagier \cite{Zagier} and Gessel \cite{Gessel}. Further, as an application of the reciprocity theorem of the $0$-dimensional generalized Fourier-Dedekind sum we derive formula for a trigonometric sum that plays a crucial role in the computation of our $q$-partial fractions. 

The results of this paper and further generalizations of the Fourier-Dedekind sum also hold for the Finite Fields (or the Galois Fields) \cite{UK_LS}.  

The paper is organized as follows. After briefly discussing the notation and terminology in Section \ref{sec_cyclotomic} and Section \ref{BT} we define our $eval$ symbolic evaluation function and its Fourier Transform.  In Subsection \ref{subsec_ECU}, we first prove a Fundamental Lemma and as a direct consequence we derive the extended cover-up method in Theorem \ref{cover_up}. In Section \ref{CP}, making use of the Cauchy-Euler derivative operator we recast a factorization result of cyclotomic polynomials into Bezout's identity thereby proving our main result Theorem \ref{main1} and provide some special cases. In Subsections \ref{di_lemma} we discuss some preparation lemmas and in Subsection \ref{subsec_qpf}, we prove the main $q$-partial fraction result. In Section \ref{qPFD}, we provide direct formula for denumerants and Ehrhart polynomials. In Section \ref{sec_recip}, we give a generalization of the Fourier-Dedekind sum and prove the associated reciprocity theorems. In Subsection \ref{0_dim}, we discuss the special case of $0$-dimensional generalized Fourier-Dedekind sum. In Subsection \ref{computation} we derive a formula for $f_{k}^{(m)}(1)$.

\section{Cyclotomic Polynomials}\label{sec_cyclotomic}
We recall the $n^{th}$ cyclotomic polynomial in $\mathbb{Q}[x]$, denoted $\Phi_{n}(x)$, is a polynomial of degree $\phi(n)$ whose roots are the $n^{th}$-primitive root of unity. By the set of $n^{th}$ primitive roots we mean 
$$
\Delta_{n}=\{e^{2\pi i r/n}:\  \textnormal{gcd}(r,n)=1,\ 1\leq r\leq n\}.
$$ 
Cyclotomic polynomials are irreducible polynomials with integer coefficients satisfying 
$$
1-x^{m}=\prod_{d|m} \Phi_{d}(x)
$$ 
and $\Phi_{1}(x)=1-x$. Note that some authors consider $(x-1)$ for $\Phi_{1}(x)$. 

We denote a special product of cyclotomic polynomials by
$$
\Psi_{n}(x)=\prod_{\substack{d|n\\ d\neq 1}}\Phi_{d}(x)\quad \textnormal{so that } 1-x^{n}=\Phi_{1}(x)\Psi_{n}(x).
$$ 
for $n\geq 2$. Therefore, we have 
$$
\Psi_{n}(x)=\frac{1-x^{n}}{1-x}=x^{n-1}+x^{n-2}\cdots+x+1.
$$
If $n=p$ is a prime number we have $\Psi_{p}(x)=\Phi_{p}(x)$. The notation $\Psi_{n}(x)$ was used for the purpose of defining inverse cyclotomic polynomials in \cite{Moree}. In this work, we denote the inverse cyclotomic polynomial defined in \cite{Moree} by
$$
\Theta_{n}(x)=\prod_{\substack{d|n\\ d\neq n}}\Phi_{d}(x).
$$

In order to derive the partial fractions, we exploit two key features of these polynomials. Firstly, the `cancellation of coefficients', when certain cyclotomic polynomials are multiplied they produce sparse polynomials. Secondly, efficient computation of the remainders with $\Phi_{d}(x)$ and $\Psi_{d}(x)$ (Lemma \ref{cor_phi}) and the computation of the Fourier Transform of the $eval$ function in the cyclotomic fields. 

By the formula for partial fraction decomposition given in Theorem \ref{main1} we can develop a formula for a general decomposition such as 
\begin{equation}\label{factorization}
\prod_{j=1}^{k}\frac{1}{1-x^{n_{j}}} = \frac{1}{(1-x)^{k}}\prod_{j=1}^{k}\prod_{\substack{d_{j}|n_{j}\\ d_{j}\neq 1}}\frac{1}{\Phi_{d_{j}}(x)}
\end{equation}
when $n_{1},\cdots, n_{k}$ are pairwise relatively prime. Nevertheless, after developing the general case we shift our focus to the $q$-partial fractions as given in (\ref{qpf_n_k}) owing to their direct relevance to number theoretic problems.

\section{Our Algebraic Approach to Partial Fractions}\label{BT}
In this paper, we restrict ourselves to those polynomials and rational functions with coefficients from $\mathbb{Q}$, the field of rationals. In particular, we concern ourselves with elements from the ring of polynomials $\mathbb{Q}[x]$ and the field of fractions $\mathbb{Q}(x)$. Our results can also be easily extended to the Finite Field case. We follow the notation and terminology as in \cite{Loren}.  

An element $p(x)\in \mathbb{Q}[x]$ is a polynomial of the form 
$$
p(x)=\alpha_{m}x^{m}+\cdots+\alpha_{0},\ \  \alpha_{i}\in \mathbb{Q}, \textnormal{ for } i=1,\cdots, m.
$$ 
If $\alpha_{m}\neq 0$ we say that the polynomial $p(x)$ is of degree $m$ and denoted $\textnormal{deg }p(x)=m$. The polynomial $p(x)$ is said to be irreducible if its only divisors are the polynomial itself and a constant polynomial in $\mathbb{Q}[x]$. We refer to $Dp(x)$ or $p'(x)$ as the (algebraic) derivative of $p(x)$.

It is well known that $\mathbb{Q}[x]$ is a principal ideal domain. We denote the ideal generated by a polynomial with the same letter in italic font, for instance the prime ideal generated by $p(x)$ is $\mathfrak{p}$, and denote the quotient of $\mathbb{Q}[x]$ with $\mathfrak{p}$ by $\mathbb{Q}[x]/\mathfrak{p}$. Given a non-empty multiplicative set $S$ (i.e., if $a,b\in S$ then $ab\in S$ and $0\notin S$) of $\mathbb{Q}[x]$, the ring of fractions of $\mathbb{Q}[x]$ with respect to $S$ will be denoted by $S^{-1}\mathbb{Q}[x]$ and is defined by 
$$
S^{-1}\mathbb{Q}[x]=\left\{\frac{f(x)}{g(x)}\in \mathbb{Q}(x):f(x)\in \mathbb{Q}[x] \textnormal{ and } g(x)\in S\right\}.
$$  
In this paper, we only work with \textit{proper fractions} of the form
$$
\frac{f(x)}{g(x)}\in \mathbb{Q}(x)\quad \textnormal{ with }\quad \textnormal{deg } f(x)< \textnormal{deg } g(x),
$$ 
and $g(x)$ having zeros on the unit circle, or in other words, $g(x)$ is a product of cyclotomic polynomials. 

\subsection{Bezout's Identity and Partial Fraction Decomposition}\label{PFD}

Two polynomials $g_{1}(x),g_{2}(x)\in \mathbb{Q}[x]$ are said to be relatively prime (or coprime) if their greatest common divisor (gcd) is 1. By Bezout's property there exist $a_{1}(x), a_{2}(x)\in \mathbb{Q}[x]$ such that 
$$
a_{1}(x)g_{1}(x)+a_{2}(x)g_{2}(x)=1.
$$
Similarly, if $g_{1}(x),\cdots,g_{n}(x)\in \mathbb{Q}[x]$ are relatively prime (or mutually coprime) then there exist $a_{1}(x),\cdots,a_{n}(x)\in \mathbb{Q}[x]$ such that 
$$
a_{1}(x)g_{1}(x)+\cdots+a_{n}(x)g_{n}(x)=1.
$$  
In this section, we prove two key results that provide an algebraic approach to partial fraction decomposition through the Bezout's identity. Towards this end, we define a natural operation we call the \textit{evaluation function modulo a polynomial} $\left(\textnormal{denoted }\res{\frac{r(x)}{s(x)}}{p(x)}\right)$, which evaluates symbolically the fraction $\frac{r(x)}{s(x)}$ in a localized ring modulo $p(x)$. The definition of our $eval$ function is algebraic and relies on the principles of substitution and localization.

\subsection{The $\textnormal{rem}$ Operator}

We define the remainder operator in the standard manner.

\begin{defn} 
Given $f(x),p(x)\in \mathbb{Q}[x]$, the remainder (denoted $\rem{f(x)}{p(x)}$) of $f(x)$ divided by $p(x)$ is a unique polynomial $r(x)$ given by the Euclid's algorithm  
$$
f(x)=p(x)q(x)+r(x),
$$
where $r(x)\in \mathbb{Q}[x]$ and $\textnormal{deg } r(x)<\textnormal{deg } p(x)$.
\end{defn}

For the purpose of defining the $eval$ operator we will express rem operator as a composition of two functions: the function $\pi_{\mathfrak{p}}:\mathbb{Q}\rightarrow \mathbb{Q}/\mathfrak{p}$ is defined 
$$
\pi_{\mathfrak{p}}(f(x))=f(x)+\mathfrak{p}
$$
and $\eta:\mathbb{Q}[x]/\mathfrak{p}\rightarrow \mathbb{Q}[x]$ defined
$$
\eta(A)=
\begin{cases} 
0 & \textnormal{ if } 0\in A\\
\textnormal{smallest degree polynomial in } A & \textnormal{ if } 0\notin A. 
\end{cases}
$$
We can easily prove the equality: $(\rem{f(x)}{p(x)})=\eta\circ \pi_{\mathfrak{p}}(f(x)).$  

A key strategy in determine the remainder of $f(x)$ with a polynomial $p(x)$, suggest by the above composition, is to replace all the occurrences of $p(x)$ in $f(x)$ by the zero polynomial until one obtains the smallest degree polynomial, which essentially is the polynomial of degree less than that of $p(x)$. That is, one can obtain the remainder by substitutions and not by performing long division (see for instance \cite{Laudano}). 

%\begin{exmp}
%The remainder of $f(x)=x^{4}+2x^{3}+x^{2}+5$ when divided by $p(x)=x^{2}+1$ can be easily obtained by substituting $-1$ wherever $x^{2}$ occurs in $f(x)$. Thus, we have 
%\begin{eqnarray}
%\nonumber \rem{(x^{4}+2x^{3}+x^{2}+5)}{(x^{2}+1)}&=& (-1)(-1)+2(-1)x+(-1)+5\\
%\nonumber &=& -2x+5 
%\end{eqnarray}
%\end{exmp}

\begin{lem}\label{cor_xk}
Suppose $f(x)=a_{n}x^{n}+a_{n-1}x^{n-1}+\cdots+a_{0}$ and if $n>k$ then
$$
\rem{f(x)}{x^{k}}=a_{k-1}x^{k-1}+a_{k-2}x^{k-2}+a_{0}.
$$
\end{lem}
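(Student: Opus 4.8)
The plan is to compute the remainder directly from the definition of the $\textnormal{rem}$ operator, exploiting the fact that division by the monomial $x^{k}$ is especially transparent: it simply splits a polynomial into its ``low-degree'' and ``high-degree'' halves. First I would write $f(x)$ as a sum over its monomials, grouping those of degree $<k$ separately from those of degree $\geq k$, namely
\begin{equation*}
f(x)=\sum_{i=0}^{k-1}a_{i}x^{i}+\sum_{i=k}^{n}a_{i}x^{i}=\sum_{i=0}^{k-1}a_{i}x^{i}+x^{k}\sum_{i=k}^{n}a_{i}x^{i-k}.
\end{equation*}
Setting $q(x)=\sum_{i=k}^{n}a_{i}x^{i-k}$ and $r(x)=\sum_{i=0}^{k-1}a_{i}x^{i}$, this is exactly an instance of the Euclidean identity $f(x)=x^{k}q(x)+r(x)$ with $\textnormal{deg }r(x)\leq k-1<k=\textnormal{deg }x^{k}$.

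Next I would invoke the uniqueness clause in the definition of the remainder: since $q(x),r(x)\in\mathbb{Q}[x]$ and $\textnormal{deg }r(x)<\textnormal{deg }x^{k}$, the polynomial $r(x)$ is \emph{the} remainder, so $\rem{f(x)}{x^{k}}=r(x)=a_{k-1}x^{k-1}+a_{k-2}x^{k-2}+\cdots+a_{0}$. This matches the claimed expression (the statement's right-hand side is evidently an abbreviation of this sum). The hypothesis $n>k$ is what guarantees that the ``high-degree'' sum defining $q(x)$ is nonempty, so that the split is genuine; if $n\leq k$ one would have $f(x)$ itself as the remainder, which is the degenerate case not asserted here.

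Alternatively, and perhaps more in the spirit of the paragraph preceding the lemma, one could argue via the substitution principle: working in $\mathbb{Q}[x]/(x^{k})$, each occurrence of $x^{k}$ (and hence $x^{i}$ for every $i\geq k$) maps to $0$ under $\pi_{\mathfrak{p}}$, so $f(x)$ and $\sum_{i=0}^{k-1}a_{i}x^{i}$ lie in the same coset; applying $\eta$ returns the smallest-degree representative, which is $\sum_{i=0}^{k-1}a_{i}x^{i}$ since its degree is already below $k$. Either route is entirely routine; there is no real obstacle here, as the lemma is essentially a restatement of what division by $x^{k}$ does. The only point requiring a word of care is the appeal to uniqueness of the quotient--remainder pair, which is exactly the content of the preceding definition, so no further justification is needed.
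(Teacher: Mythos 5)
Your proof is correct; the paper actually states this lemma without any proof at all, treating it as immediate from the definition of the remainder, and your direct split $f(x)=x^{k}q(x)+r(x)$ together with the uniqueness of the remainder (or the substitution argument in $\mathbb{Q}[x]/(x^{k})$) is exactly the routine justification the paper's preceding paragraph suggests. Both routes you give are fine, and your remark that the displayed right-hand side abbreviates $\sum_{i=0}^{k-1}a_{i}x^{i}$ is the correct reading of the statement.
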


Computationally, substitution based remainder algorithm is faster than long division. Incidentally, the cyclotomic polynomials have a good behavior with respect to taking remainders. The following corollary provides us with an efficient way of computing the remainder through the cyclotomic polynomial $\Phi_{d}(x)$ and $\Psi_{m}(x)$

\begin{lem}\label{cor_phi}
Suppose $f(x)=a_{n}x^{n}+a_{n-1}x^{n-1}+\cdots+a_{0}$ then
$$
\rem{f(x)}{\Phi_{d}(x)}=\rem{(a_{n}x^{n\%d}+a_{n-1}x^{(n-1)\%d}+a_{0})}{\Phi_{d}(x)},
$$
and 
$$
\rem{f(x)}{\Psi_{m}(x)}=\rem{(a_{n}x^{n\%m}+a_{n-1}x^{(n-1)\%m}+a_{0})}{\Psi_{m}(x)},
$$
where $\%$ operator is the remainder in the ring of integers. 
\end{lem}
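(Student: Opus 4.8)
The plan is to reduce the statement to a single basic fact about cyclotomic polynomials, namely that $\Phi_d(x)$ divides $1-x^d$, so that $x^d \equiv 1 \pmod{\Phi_d(x)}$, and similarly that $\Psi_m(x)$ divides $1-x^m$ (indeed $1-x^m = \Phi_1(x)\Psi_m(x)$ by the definitions in Section \ref{sec_cyclotomic}), so that $x^m \equiv 1 \pmod{\Psi_m(x)}$. Granting this, for any exponent $n$ write $n = qd + (n\%d)$ with $0 \le n\%d < d$; then $x^n = (x^d)^q \, x^{n\%d} \equiv x^{n\%d} \pmod{\Phi_d(x)}$. Summing this congruence over the monomials of $f(x)$ with their coefficients gives $f(x) \equiv a_n x^{n\%d} + a_{n-1} x^{(n-1)\%d} + \cdots + a_0 \pmod{\Phi_d(x)}$. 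Since the $\operatorname{rem}$ operator is exactly the representative-of-smallest-degree map $\eta \circ \pi_{\mathfrak p}$ described just before Lemma \ref{cor_xk}, two polynomials congruent mod $\Phi_d(x)$ have the same remainder; hence $\rem{f(x)}{\Phi_d(x)} = \rem{(a_n x^{n\%d} + a_{n-1} x^{(n-1)\%d} + \cdots + a_0)}{\Phi_d(x)}$, which is the first claim.

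For the second claim I would repeat the argument verbatim with $\Phi_d(x)$ replaced by $\Psi_m(x)$: from $1-x^m = \Phi_1(x)\Psi_m(x)$ we get $x^m \equiv 1 \pmod{\Psi_m(x)}$, so $x^n \equiv x^{n\%m} \pmod{\Psi_m(x)}$ for every $n$, and summing over the terms of $f(x)$ yields the congruence $f(x) \equiv a_n x^{n\%m} + \cdots + a_0 \pmod{\Psi_m(x)}$; again, congruent polynomials have equal remainders. One small bookkeeping point worth making explicit is that the right-hand sides of both displays are written as sums of powers $x^{j\%d}$ (resp. $x^{j\%m}$), and several distinct $j$ may collapse to the same residue, so the expression $a_n x^{n\%d} + a_{n-1}x^{(n-1)\%d} + \cdots + a_0$ is to be read as an unsimplified formal sum whose like terms are then combined before (or during) the final reduction mod $\Phi_d(x)$ — this does not affect the argument since addition is well defined in the quotient ring $\mathbb{Q}[x]/\mathfrak p$.

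There is no serious obstacle here; the only thing to be careful about is the appeal to the ``substitution'' description of $\operatorname{rem}$ rather than to long division. Concretely, the justification is that $\pi_{\mathfrak p}$ is a ring homomorphism, so it sends the polynomial identity $x^n = (x^d)^q x^{n\%d}$ and the relation $\pi_{\mathfrak p}(x^d) = \pi_{\mathfrak p}(1)$ (which is what $\Phi_d(x)\mid 1-x^d$ gives) to the corresponding identities in $\mathbb{Q}[x]/\mathfrak p$; applying $\eta$ to recover the canonical representative then produces the stated remainder. Thus the lemma is really just the observation, already flagged in the text after Lemma \ref{cor_xk}, that one may compute remainders by replacing occurrences of $x^d$ (resp. $x^m$) by $1$ rather than by performing division, together with the elementary divisibility $\Phi_d(x)\mid 1-x^d$ and $\Psi_m(x)\mid 1-x^m$.
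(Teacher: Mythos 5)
Your proposal is correct and takes essentially the same route as the paper: both arguments rest on the divisibility $\Phi_d(x)\mid 1-x^d$ (resp.\ $\Psi_m(x)\mid 1-x^m$), which lets one reduce exponents modulo $d$ (resp.\ $m$) before taking the final remainder. The paper packages this as an explicit two-step division (first by $1-x^d$, then by $\Phi_d(x)$) whereas you phrase it as a congruence $x^n\equiv x^{n\%d}\pmod{\Phi_d(x)}$, but the mathematical content is identical.
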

\begin{proof}
As $\Phi_{d}(x)$ divides $1-x^{d}$, we first divide the polynomial $f(x)$ by $(1-x^{d})$ to obtain 
\begin{equation}\label{by_xd}
f(x)=(1-x^{d})q_{d}(x)+r_{d}(x),
\end{equation}
where the remainder $r_{d}(x)$, in turn when divided by $\Phi_{d}(x)$ gives us
$$
r_{d}(x)=\Phi_{d}(x)q(x)+r(x) \textnormal{ where deg }r(x)<\phi(d).
$$
On substituting for $r_d(x)$ into (\ref{by_xd}) and $(1-x^{d})=\Phi_{d}\prod_{d'|d, d'\neq d}\Phi_{d'}(x)$ we get 
$$
f(x)=\left(\prod_{\substack{d'|d\\ d'\neq d}}\Phi_{d'}(x)+q(x)\right)\Phi_{d}(x)+r(x), \textnormal{ where deg }r(x)<\textnormal{deg }\Phi_{d}(x).
$$
Moreover, note that the term $r_{d}(x)$ can be obtained by replacing $x^{d}$ by $1$ in $f(x)$. The result for $\Psi_{m}(x)$ follows in similar lines. 
\end{proof}

\begin{exmp}
The remainder of $x^{101}+5x^{31}$ by $\Phi_{3}(x)=x^{2}+x+1$ can be obtained by first observing that the polynomial $x^{2}+x+1$ divides $x^{3}-1$ so before substituting $x^{2}$ with $-x-1$ we can first substitute $x^{3}$ with $1$ leading us to 
\begin{eqnarray}
\nonumber \rem{(x^{101}+5x^{31})}{(x^{2}+x+1)}&=& \rem{(x^{101\%3}+5x^{31\%3})}{(x^{2}+x+1)}\\
\nonumber &=& \rem{(x^2+5x)}{(x^{2}+x+1)}\\
\nonumber &=& (-x-1)+5x=4x-1
\end{eqnarray}
\end{exmp}

In order to obtain the remainder by $\Psi_{m}(x)$ one requires regrouping the terms and performing a substitution based on the following rule:
\begin{equation}\label{rem_psi}
x^{i} \textnormal{ rem } \Psi_{m}(x)= \left\{\begin{array}{@{}l@{\thinspace}l}
       x^{i\%m} & \text{   if   } i\%m \neq m-1 \\
       -(x^{m-2}+\cdots+1)  & \text{   if   } i\%m=m-1. \\
     \end{array}\right.
\end{equation}

\subsection{A Symbolic Evaluation Function: $eval$}
Let $r(x),s(x)$ and $a(x)$ in the ring of polynomials $\mathbb{Q}[x]$ such that $s(x)$ and $a(x)$ are relatively prime. By Bezout's identity there exist two polynomials $\alpha(x),\beta(x)\in \mathbb{Q}[x]$ such that 
\begin{equation}\label{a_alpha}
\alpha(x)s(x)+\beta(x)a(x)=1.
\end{equation}
Let $a(x)$ factorize as 
\begin{equation}\label{ax}
a(x)=p_{1}(x)^{e_1}\cdots p_{k}(x)^{e_k},
\end{equation}
where the polynomials $p_{1}(x),\cdots, p_{k}(x)$ are distinct irreducible polynomials. We localize the ring $\mathbb{Q}[x]$ about the multiplicative set 
$$
S_{\mathfrak{a}}=\mathbb{Q}[x]-(\mathfrak{p}_{1}\cup\cdots \cup\mathfrak{p}_{k}).
$$ 
The localization $S^{-1}_{\mathfrak{a}}\mathbb{Q}[x]$ of $\mathbb{Q}[x]$ is \textit{the set of all rational functions with denominators not sharing any non-trivial common factor with $a(x)$}. It can be easily shown that $S^{-1}_{\mathfrak{a}}\mathbb{Q}[x]=\cap_{i=1}^{k}S^{-1}_{\mathfrak{p}_{i}}\mathbb{Q}[x]$. The process of localization can be formalized as a mapping $j_{S_{\mathfrak{a}}}$ from $\mathbb{Q}[x]$ to $S^{-1}_{\mathfrak{a}}\mathbb{Q}[x]$.

\begin{defn}\label{eval}
Given $r(x), s(x), a(x)\in \mathbb{Q}[x]$ and $s(x),a(x)$ satisfying (\ref{a_alpha}) and (\ref{ax}), we define the evaluation of the fraction $\frac{r(x)}{s(x)}$ modulo $a(x)$ from $S^{-1}_{\mathfrak{a}}\mathbb{Q}[x]$ to $\mathbb{Q}[x]$ as
$$
\res{\frac{r(x)}{s(x)}}{a(x)} = \rem{(\alpha(x)r(x))}{a(x)},
$$ 
where $\alpha(x) s(x)\equiv 1 \textnormal{ mod }a(x)$. 
\end{defn}
The above function, in a way, is an evaluation process that results in a polynomial when another polynomial is (symbolically) set to zero. The key to the definition of the $eval$ function is the following ring isomorphism. 

\begin{lem}\label{isomorphism}
The rings $S_{\mathfrak{a}}^{-1}\mathbb{Q}[x]/S_{\mathfrak{a}}^{-1}\mathfrak{a}$ and $\mathbb{Q}[x]/\mathfrak{a}$ are isomorphic. 
\end{lem}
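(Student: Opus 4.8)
The plan is to exhibit an explicit ring homomorphism from $S_{\mathfrak a}^{-1}\mathbb Q[x]$ onto $\mathbb Q[x]/\mathfrak a$ whose kernel is exactly $S_{\mathfrak a}^{-1}\mathfrak a$, and then appeal to the first isomorphism theorem. The natural candidate is the composite of the quotient map $\pi_{\mathfrak a}\colon\mathbb Q[x]\to\mathbb Q[x]/\mathfrak a$ with the universal property of localization: since every element $s(x)\in S_{\mathfrak a}$ has, by construction, greatest common divisor $1$ with $a(x)$ (as $s(x)$ lies in no $\mathfrak p_i$ containing $a(x)$), its image $\pi_{\mathfrak a}(s(x))$ is a unit in $\mathbb Q[x]/\mathfrak a$ — this is precisely the Bezout relation \eqref{a_alpha} read modulo $a(x)$. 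Hence $\pi_{\mathfrak a}$ extends uniquely to a ring homomorphism $\widetilde\pi\colon S_{\mathfrak a}^{-1}\mathbb Q[x]\to\mathbb Q[x]/\mathfrak a$ sending $\frac{r(x)}{s(x)}\mapsto \pi_{\mathfrak a}(r(x))\,\pi_{\mathfrak a}(s(x))^{-1}$.

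The steps, in order, are: (i) verify that $S_{\mathfrak a}=\mathbb Q[x]-(\mathfrak p_1\cup\cdots\cup\mathfrak p_k)$ is genuinely a multiplicative set and that the $\mathfrak p_i$ are exactly the primes lying over $\mathfrak a$, so that $s(x)\in S_{\mathfrak a}$ iff $\gcd(s(x),a(x))=1$; (ii) use Bezout to conclude $\pi_{\mathfrak a}(s(x))$ is invertible in $\mathbb Q[x]/\mathfrak a$, and invoke the universal property of localization to obtain $\widetilde\pi$; (iii) check surjectivity, which is immediate since $\widetilde\pi$ already restricts to the surjection $\pi_{\mathfrak a}$ on the image of $\mathbb Q[x]$; (iv) identify the kernel. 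For the kernel, $\frac{r(x)}{s(x)}\in\ker\widetilde\pi$ means $r(x)\in\mathfrak a$, i.e. $r(x)=a(x)h(x)$ for some $h(x)\in\mathbb Q[x]$, which says exactly that $\frac{r(x)}{s(x)}=a(x)\cdot\frac{h(x)}{s(x)}\in S_{\mathfrak a}^{-1}\mathfrak a$; conversely any element of $S_{\mathfrak a}^{-1}\mathfrak a$ has numerator a multiple of $a(x)$ and so maps to $0$. Thus $\ker\widetilde\pi=S_{\mathfrak a}^{-1}\mathfrak a$, and the first isomorphism theorem gives $S_{\mathfrak a}^{-1}\mathbb Q[x]/S_{\mathfrak a}^{-1}\mathfrak a\cong\mathbb Q[x]/\mathfrak a$.

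Alternatively, and perhaps more cleanly, one can quote the standard commutative-algebra fact that localization commutes with quotients: for a multiplicative set $S$ and an ideal $I$ of a ring $R$, there is a canonical isomorphism $S^{-1}R/S^{-1}I\cong \bar S^{-1}(R/I)$ where $\bar S$ is the image of $S$ in $R/I$. Applying this with $R=\mathbb Q[x]$, $I=\mathfrak a$, $S=S_{\mathfrak a}$, it remains only to observe that every element of $\bar S_{\mathfrak a}$ is already a unit in $\mathbb Q[x]/\mathfrak a$ (again by the Bezout relation), so the further localization $\bar S_{\mathfrak a}^{-1}(\mathbb Q[x]/\mathfrak a)$ equals $\mathbb Q[x]/\mathfrak a$ itself. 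I would present the first, self-contained argument as the main proof and perhaps remark on the second.

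The main obstacle — really the only point needing care — is step (i) together with the kernel computation: one must be sure that the localizing set $S_{\mathfrak a}$ is precisely the set of polynomials coprime to $a(x)$ (so that the Bezout hypothesis applies to every denominator, not just to $a(x)$ itself), and one must handle the identification $S_{\mathfrak a}^{-1}\mathfrak a=\{\,r/s : a(x)\mid r(x)\,\}$ carefully, since a fraction can have many representatives. Everything else is formal manipulation with the universal property of localization, and the Bezout identity \eqref{a_alpha} is exactly what makes the inverse of $\pi_{\mathfrak a}(s(x))$ exist — this is the link to the rem operator that the subsequent definition of $eval$ will exploit.
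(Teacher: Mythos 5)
Your proof is correct. The paper actually states Lemma \ref{isomorphism} without proof, so there is no argument to compare against; your route --- extend $\pi_{\mathfrak{a}}$ to $S_{\mathfrak{a}}^{-1}\mathbb{Q}[x]$ via the universal property of localization (each denominator becomes a unit mod $a(x)$ by the Bezout relation \eqref{a_alpha}), then apply the first isomorphism theorem after identifying the kernel with $S_{\mathfrak{a}}^{-1}\mathfrak{a}$ --- is the standard one and is exactly the mechanism the paper implicitly relies on in the computation of $\res{r(x)/s(x)}{a(x)}$ that immediately follows the lemma. Your two points of care are the right ones: $S_{\mathfrak{a}}$ consists precisely of the polynomials coprime to $a(x)$ because $\mathbb{Q}[x]$ is a UFD and the $\mathfrak{p}_i$ are the primes containing $\mathfrak{a}$, and the kernel identification is representative-independent since $r(x)s'(x)=r'(x)s(x)$ with $a(x)\mid r(x)$ and $\gcd(s(x),a(x))=1$ forces $a(x)\mid r'(x)$.
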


Suppose $h$ is the isomorphism, and $\eta$ and $\pi_{\mathfrak{a}}$ are as defined in the previous subsection we can now state the $eval$ function as a sequence of compositions
$$
 S_{\mathfrak{a}}^{-1}\mathbb{Q}[x] \xrightarrow[]{\pi_{S_{\mathfrak{a}}^{-1}\mathfrak{a}}} S_{\mathfrak{a}}^{-1}\mathbb{Q}[x]/S_{\mathfrak{a}}^{-1}\mathfrak{a}\xrightarrow[]{h^{-1}}\mathbb{Q}[x]/\mathfrak{a}\xrightarrow[]{\eta}\mathbb{Q}[x].
$$
By Bezout's identity, there exist $\alpha(x),\beta(x)$ such that $\alpha(x)s(x)=1-\beta(x)a(x)$ and observing that $\frac{-\alpha(x)r(x)\beta(x)a(x)}{1-\beta(x)a(x)}\in S_{\mathfrak{a}}^{-1}\mathfrak{a}$ we have 
\begin{eqnarray}
\nonumber \res{\frac{r(x)}{s(x)}}{a(x)} &=& \eta\circ h^{-1}\circ \pi_{S_{\mathfrak{a}}^{-1}\mathfrak{a}} \left(\frac{r(x)}{s(x)}\right) =\eta\circ h^{-1}\left(\frac{r(x)}{s(x)}+S_{\mathfrak{a}}^{-1}\mathfrak{a}\right)\\
\nonumber &=& \eta\circ h^{-1}\left(\frac{\alpha(x)r(x)}{1-\beta(x)a(x)}+\frac{-\alpha(x)r(x)\beta(x)a(x)}{1-\beta(x)a(x)}+S_{\mathfrak{a}}^{-1}\mathfrak{a}\right) 
\end{eqnarray}
Upon simplification and using the definitions of $h$ and $\eta$ we get 
\begin{eqnarray}
\nonumber \res{\frac{r(x)}{s(x)}}{a(x)} &=& \eta\circ h^{-1}\left(\alpha(x)r(x)+S_{\mathfrak{a}}^{-1}\mathfrak{a}\right) \\
\nonumber &=& \eta\left(\alpha(x)r(x)+\mathfrak{a}\right) = \rem{\alpha(x)r(x)}{a(x)}
\end{eqnarray}
In effect, the $eval$ function acting on $r(x)/s(x)$ with respect to $a(x)$ first takes the reciprocal of $s(x)$ mod $a(x)$, which is $\alpha(x)$ and takes its remainder by $a(x)$ after multiplying with $r(x)$. The following properties can be proved directly from the definitions. 
\begin{lem} \label{lem_res}
Given $a(x), r_{i}(x), s_{i}(x)\in \mathbb{Q}[x]$ and $\textnormal{gcd}(r_{i}(x),s(x))=1$ for $i=0,1$. The following are some properties of $eval$ function:
\begin{enumerate}
\item $\res{r_{0}(x)}{a(x)}=\rem{r_{0}(x)}{a(x)}$
\item $\res{\frac{r_{0}(x)r_{1}(x)}{s_{0}(x)s_{1}(x)}}{a(x)}=\rem{\left\{\res{\frac{r_{0}(x)}{s_{0}(x)}}{a(x)}\res{\\\frac{r_{1}(x)}{s_{1}(x)}}{a(x)}\right\}}{a(x)}$
\item $\res{\frac{r_{0}(x) - p_{0}(x)a(x)}{s_{0}(x)- q_{0}(x) a(x)}}{a(x)}=\res{\frac{r_{0}(x)}{s_{0}(x)}}{a(x)}$ \textnormal{for} $p_{0}(x),q_{0}(x)\in \mathbb{Q}[x].$
\end{enumerate}
\end{lem}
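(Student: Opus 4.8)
The plan for Lemma~\ref{lem_res} is to derive all three identities directly from the closed form for $eval$ recorded in Definition~\ref{eval}, namely $\res{\frac{r(x)}{s(x)}}{a(x)}=\rem{(\alpha(x)r(x))}{a(x)}$ for any Bezout coefficient $\alpha(x)$ with $\alpha(x)s(x)\equiv 1\ \textnormal{mod}\ a(x)$, together with two elementary facts about the canonical surjection $\pi_{\mathfrak{a}}\colon\mathbb{Q}[x]\to\mathbb{Q}[x]/\mathfrak{a}$ whose composite $\eta\circ\pi_{\mathfrak{a}}$ computes $\rem{f(x)}{a(x)}$: it is a ring homomorphism, so that $\rem{(f(x)g(x))}{a(x)}$ depends only on $\rem{f(x)}{a(x)}$ and $\rem{g(x)}{a(x)}$ while $\rem{(f(x)+h(x)a(x))}{a(x)}=\rem{f(x)}{a(x)}$ for every $h(x)$; and a unit of $\mathbb{Q}[x]/\mathfrak{a}$ has a unique inverse there, so the coefficient $\alpha(x)$ in the closed form enters only through its class modulo $a(x)$. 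Throughout, $s_{0}(x)$, $s_{1}(x)$ and $s_{0}(x)s_{1}(x)$ are assumed coprime to $a(x)$, which is all that is needed for the relevant inverses to exist.

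Properties (1) and (3) are then immediate. For (1), take $s_{0}(x)=1$: here $\alpha(x)=1$ is a legitimate Bezout coefficient and the closed form collapses to $\res{r_{0}(x)}{a(x)}=\rem{(1\cdot r_{0}(x))}{a(x)}=\rem{r_{0}(x)}{a(x)}$. For (3), since $r_{0}(x)-p_{0}(x)a(x)\equiv r_{0}(x)$ and $s_{0}(x)-q_{0}(x)a(x)\equiv s_{0}(x)$ modulo $a(x)$, the polynomial $s_{0}(x)-q_{0}(x)a(x)$ is invertible modulo $a(x)$ with the same inverse $\alpha_{0}(x)$ as $s_{0}(x)$, and $\alpha_{0}(x)\bigl(r_{0}(x)-p_{0}(x)a(x)\bigr)\equiv\alpha_{0}(x)r_{0}(x)$ modulo $a(x)$; hence both sides of (3) equal $\rem{(\alpha_{0}(x)r_{0}(x))}{a(x)}$ by the closed form together with the invariance of remainders under adding multiples of $a(x)$.

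The one step with a little content is (2). I would choose $\alpha_{i}(x)$ with $\alpha_{i}(x)s_{i}(x)\equiv 1\ \textnormal{mod}\ a(x)$ for $i=0,1$; multiplying these two congruences gives $\alpha_{0}(x)\alpha_{1}(x)\,s_{0}(x)s_{1}(x)\equiv 1\ \textnormal{mod}\ a(x)$, so $\alpha_{0}(x)\alpha_{1}(x)$ is a reciprocal of $s_{0}(x)s_{1}(x)$ modulo $a(x)$ and the left-hand side of (2) equals $\rem{(\alpha_{0}(x)\alpha_{1}(x)r_{0}(x)r_{1}(x))}{a(x)}$. For the right-hand side, each factor is $\res{\frac{r_{i}(x)}{s_{i}(x)}}{a(x)}=\rem{(\alpha_{i}(x)r_{i}(x))}{a(x)}$, and feeding the product of these two polynomials through the ring homomorphism $\pi_{\mathfrak{a}}$ shows the right-hand side is also $\rem{(\alpha_{0}(x)r_{0}(x)\,\alpha_{1}(x)r_{1}(x))}{a(x)}$; the two sides coincide.

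The only point deserving real care — and the one I would emphasize — is well-definedness: each identity must be checked to be independent of the chosen Bezout coefficients, which holds because every expression above involves $\alpha_{i}(x)$ solely through its residue class, and that class is the unique multiplicative inverse of $\pi_{\mathfrak{a}}(s_{i}(x))$ in $\mathbb{Q}[x]/\mathfrak{a}$. As an alternative route, I would remark that all three properties can instead be read straight off the composition $\eta\circ h^{-1}\circ\pi_{S_{\mathfrak{a}}^{-1}\mathfrak{a}}$ used earlier to obtain the closed form, since $\pi_{S_{\mathfrak{a}}^{-1}\mathfrak{a}}$ is a ring homomorphism and $h^{-1}$ a ring isomorphism: then (1) says this composite restricts to the remainder map $f(x)\mapsto\rem{f(x)}{a(x)}$ on $\mathbb{Q}[x]$, (2) is multiplicativity of a ring map, and (3) reflects that subtracting multiples of $a(x)$ from the numerator and denominator alters the fraction only by an element of the ideal $S_{\mathfrak{a}}^{-1}\mathfrak{a}$.
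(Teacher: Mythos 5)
Your proposal is correct and matches the paper's intent exactly: the paper offers no written proof, stating only that these properties "can be proved directly from the definitions," and your argument is precisely that direct verification from the closed form $\res{\frac{r(x)}{s(x)}}{a(x)}=\rem{(\alpha(x)r(x))}{a(x)}$, including the appropriate attention to well-definedness of the Bezout coefficient modulo $a(x)$.
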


%\begin{exmp}
%To compute $\res{\frac{1}{x+1}}{x^{2}+1}$. We can follow the above lemma to obtain 
%\begin{eqnarray}
%\nonumber \res{\frac{1}{x+1}}{(x^{2}+1)} &=& \res{\frac{1}{x+1}\times \frac{x-1}{x-1}}{(x^{2}+1)} \\
%\nonumber &=& \res{\frac{x-1}{x^{2}-1}}{(x^{2}+1)}. 
%\end{eqnarray}  
%By Lemma \ref{lem_res}(3) we substitute $-1$ for $x^2$ to get
% $$
%\res{\frac{1}{x+1}}{x^{2}+1}=\res{\frac{x-1}{-1-1}}{(x^{2}+1)}
%$$
%and using Lemma \ref{lem_res}(1) we obtain
% $$
%\res{\frac{1}{x+1}}{x^{2}+1}=\frac{1}{2}(1-x).
%$$
%\end{exmp}	

\subsection{Fourier Analysis of the $eval$ Function}\label{subsec_FA}
The $eval$ function behaves in a satisfying  manner under the action of Fourier Transform. Our goal in this section is to study the finite Fourier series of the term $h(x)/(1-x^{b})$ appearing in the $q$-partial fraction, where $h(x)=\res{f(x)/g(x)}{\Psi_{b}(x)}$ and $f(x),g(x)\in \mathbb{Q}[x]$ and $g(x)$ relatively prime to $\Psi_{b}(x)$. For an excellent discussion on finite Fourier series see \cite{Beck}.

\begin{lem}\label{eval_eval}
Let $f(x),g(x)\in \mathbb{Q}[x]$ and $\textnormal{gcd}(g(x),\Psi_{b}(x))=1$. Suppose
$$
h(x)=\res{\frac{f(x)}{g(x)}}{\Psi_{b}(x)}.
$$
Then, we have $h(\xi)=f(\xi)/g(\xi)$ for $\xi= e^{2\pi i/b}$.
\end{lem}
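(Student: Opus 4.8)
The plan is to unwind Definition \ref{eval} and then substitute $x=\xi$ into the resulting polynomial identity. Since $\textnormal{gcd}(g(x),\Psi_{b}(x))=1$, Bezout's identity supplies $\alpha(x)\in\mathbb{Q}[x]$ with $\alpha(x)g(x)\equiv 1 \textnormal{ mod }\Psi_{b}(x)$, and by definition $h(x)=\rem{(\alpha(x)f(x))}{\Psi_{b}(x)}$. Hence there is a $q(x)\in\mathbb{Q}[x]$ with
$$
h(x)=\alpha(x)f(x)-\Psi_{b}(x)q(x).
$$
The first step I would take is to record the elementary fact that $\Psi_{b}(\xi)=0$: from $\Psi_{b}(x)=\frac{1-x^{b}}{1-x}$ together with $\xi^{b}=1$ and $\xi\neq 1$ (here $b\geq 2$), we get $\Psi_{b}(\xi)=0$. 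Setting $x=\xi$ in the displayed identity then kills the last term, leaving $h(\xi)=\alpha(\xi)f(\xi)$.

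Next I would evaluate the congruence defining $\alpha(x)$. Write $\alpha(x)g(x)=1+\Psi_{b}(x)r(x)$ for some $r(x)\in\mathbb{Q}[x]$; putting $x=\xi$ and using $\Psi_{b}(\xi)=0$ yields $\alpha(\xi)g(\xi)=1$. To conclude $\alpha(\xi)=1/g(\xi)$ I must know $g(\xi)\neq 0$, and this is precisely where the coprimality hypothesis is used: $\xi=e^{2\pi i/b}$ is a primitive $b$-th root of unity, hence a root of $\Phi_{b}(x)$, and $\Phi_{b}(x)\mid\Psi_{b}(x)$ because $\Psi_{b}=\prod_{d\mid b,\,d\neq 1}\Phi_{d}$. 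If $g(\xi)=0$, then since $\Phi_{b}(x)$ is the minimal polynomial of $\xi$ over $\mathbb{Q}$ we would have $\Phi_{b}(x)\mid g(x)$, so $\Phi_{b}(x)$ divides both $g(x)$ and $\Psi_{b}(x)$, contradicting $\textnormal{gcd}(g(x),\Psi_{b}(x))=1$. Therefore $g(\xi)\neq 0$, $\alpha(\xi)=1/g(\xi)$, and combining with $h(\xi)=\alpha(\xi)f(\xi)$ gives $h(\xi)=f(\xi)/g(\xi)$.

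I do not expect a serious obstacle: the argument is essentially a single substitution $x=\xi$ into two polynomial identities. The only point needing a little care is the non-vanishing $g(\xi)\neq 0$, which rests on the irreducibility of $\Phi_{b}(x)$ and the divisibility $\Phi_{b}(x)\mid\Psi_{b}(x)$, and implicitly on $b\geq 2$ so that $\Psi_{b}$ (and the relation $\xi\neq 1$) make sense. If one prefers, the same conclusion can be extracted from the multiplicativity property (2) of Lemma \ref{lem_res} together with Lemma \ref{cor_phi}, but the direct substitution above seems the cleanest route.
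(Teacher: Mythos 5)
Your proposal is correct and follows essentially the same route as the paper: apply Bezout's identity to get $\alpha(x)g(x)+\beta(x)\Psi_{b}(x)=1$, evaluate at $x=\xi$ to obtain $\alpha(\xi)g(\xi)=1$, and substitute $\xi$ into $h(x)=\alpha(x)f(x)-q(x)\Psi_{b}(x)$. Your extra remark on $g(\xi)\neq 0$ is a nice touch, though it already follows directly from $\alpha(\xi)g(\xi)=1$.
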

\begin{proof}
As $\textnormal{gcd}(g(x),\Psi_{b}(x))=1$, by Bezout's identity, there exist $\alpha(x),\beta(x)\in \mathbb{Q}[x]$ such that 
\begin{equation}\label{g_psi}
\alpha(x)g(x)+\beta(x)\Psi_{b}(x)=1.
\end{equation}
Substituting $\xi=e^{2\pi i/b}$ both sides of (\ref{g_psi}), we get $\alpha(\xi)g(\xi)=1.$\\
Also, 
\begin{eqnarray}
\nonumber h(x)=\res{\frac{f(x)}{g(x)}}{\Psi_{b}(x)}&=& f(x)\alpha(x) \textnormal{ rem } \Psi_{b}(x)\\
\nonumber &=& f(x)\alpha(x)-q(x)\Psi_{b}(x),
\end{eqnarray}
for some $q(x)\in \mathbb{Q}[x]$. Hence, we have $h(\xi)=f(\xi)\alpha(\xi)=f(\xi)/g(\xi).$
\end{proof}

\begin{exmp}
Evaluating $g(x)=(-\frac{1}{b}x\Psi'_{b}(x))^{k} \textnormal{ rem }\Psi_{b}(x)$ at $\xi = e^{2\pi i/b}$ gives us 
$$
g(\xi)=(-\frac{1}{b}\xi\Psi'_{b}(\xi))^{k}=\frac{1}{(1-\xi)^{k}},
$$
since we have $g(x)=eval\left(\frac{1}{(1-x)^{k}};\ \Psi_{b}(x)\right)$ by  Lemma \ref{pow_k}.
\end{exmp}

Let $a(n)$ be a periodic function on $\mathbb{Z}$ with a period $b$. We denote it by a series $F(x)=\sum^{\infty}_{n=0}a(n)x^{n}$. Then, one can easily show that $F(x)$ has a generating function   
$$
F(x)=\frac{h(x)}{1-x^{b}}\quad \textnormal{  for }\quad h(x)=\sum^{b-1}_{j=0}a(k)x^{j}.
$$
The finite Fourier series expansion is given by 
$$
a(n)=\frac{1}{b}\sum_{j=0}^{b-1}h(\xi^{j})\xi^{-nj},
$$
with $\xi=e^{2\pi i/b}$. We also denote the zeroth Fourier coefficient by
$$
\textnormal{avg}(h)=\frac{1}{b}\sum^{b-1}_{j=0}a(j).
$$
Let $h(x)=\res{f(x)/g(x)}{\Psi_{b}(x)}$. By Lemma \ref{eval_eval} and the finite Fourier series expansion we deduce 
\begin{equation}
F(x)=\frac{h(x)}{1-x^{b}} = \sum_{n=0}^{\infty}\left(\textnormal{avg}(h)+\frac{1}{b}\sum^{b-1}_{j=1}\frac{f(\xi^{j})}{g(\xi^{j})}\xi^{-jn}\right)x^{n}.
\end{equation}
Thus by the formal power series $F(x)=\sum^{\infty}_{n=0}a(n)x^{n}$ 
\begin{equation}\label{FFS}
a(n)=\textnormal{avg}(h)+\frac{1}{b}\sum^{b-1}_{j=1}\frac{f(\xi^{j})}{g(\xi^{j})}\xi^{-jn}.
\end{equation}

The following example is a connecting link between the $eval$ function and the Fourier-Dedekind sum given in \cite{Beck}.
\begin{exmp}\label{eval_FDS}
Let $n_{1},\cdots, n_{k}$ and $b$ be pairwise relatively prime positive integers and suppose $h(x)=\res{\frac{1}{(1-x^{n_{1}})\cdots (1-x^{n_{k}})}}{\Psi_{b}(x)}$. Then the $n^{th}$ term in the power series of $h(x)/(1-x^{b})$ is 
$$
a(n)=\textnormal{avg}(h)+\frac{1}{b}\sum^{b-1}_{j=1}\frac{\xi^{-jn}}{(1-\xi^{jn_{1}})\cdots (1-\xi^{jn_{k}})}.
$$ 
\end{exmp}

\subsection{Extended Cover-Up Method}\label{subsec_ECU}
Given $p_{1}(x),\cdots,p_{n}(x)\in \mathbb{Q}[x]$ non-trivial polynomials that are pairwise relatively prime i.e., $\textnormal{gcd}(p_{i}(x),p_{j}(x))=1$ if $i\neq j$. Algebraically speaking, a partial fraction decomposition of the rational function of the form 
$$
\frac{1}{p_{1}(x)\cdots p_{n}(x)}
$$
can be viewed as a Bezout's identity expressed in terms of the functions
\begin{equation}\label{gj}
g_{j}(x)=\prod_{\substack{i=1\\ i\neq j}}^{n}p_{1}(x),\quad\quad 1\leq j\leq n,
\end{equation}
which are coprime, $\textnormal{gcd}(g_{1}(x),\cdots, g_{n}(x))=1$. So, by Bezout's identity there exist $a_{1}(x),\cdots,a_{n}(x)$ polynomials in $\mathbb{Q}[x]$ such that 
$$
a_{1}(x)g_{1}(x)+\cdots+a_{n}(x)g_{n}(x)=1
$$
which translates by the definition of $g_{j}(x)$ given in (\ref{gj}) to partial fraction form as 
\begin{equation}\label{pf_basic}
\frac{1}{p_{1}(x)\cdots p_{n}(x)}=\frac{a_{1}(x)}{p_{1}(x)}+\cdots+\frac{a_{n}(x)}{p_{n}(x)}
\end{equation}

Now, suppose $f(x)\in \mathbb{Q}[x]$ with $\textnormal{deg }f(x)<\textnormal{deg }(p_{1}(x)\cdots p_{n}(x))$ and the $\textnormal{gcd}(f(x),p_{i}(x))=1$ for all $i=1,2,\cdots,n$. We have by Euclid's Division Theorem
$$
a_{i}(x)f(x)=p_{i}(x)q_{i}(x)+r_{i}(x), \textnormal{  deg }r_{i}(x)<\textnormal{ deg }p_{i}(x)
$$
where $q_{i}(x)$ and $r_{i}(x)$ are the quotient and remainder respectively of $a_{i}(x)$  when divided by $p_{i}(x)$. 

\begin{lem} \label{lem_rem}
Suppose the equation 
\begin{equation}\label{lem_bez1}
b_{1}(x)g_{1}(x)+\cdots+b_{n}(x)g_{n}(x)=f(x)
\end{equation}
holds for $\textnormal{deg }f(x)<\textnormal{deg }p_{1}(x)\cdots p_{n}(x)$ . Then, the equation 
\begin{equation}\label{bez_0}
(\rem{b_{1}(x)}{p_{1}(x)})g_{1}(x)+\cdots+(\rem{b_{n}(x)}{p_{n}(x)})g_{n}(x)=f(x)
\end{equation}
also holds. 
\end{lem}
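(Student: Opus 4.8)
The idea is to perform the division of each $b_i(x)$ by $p_i(x)$ explicitly, substitute into \eqref{lem_bez1}, and isolate a multiple of the full product $P(x):=p_1(x)\cdots p_n(x)$; a degree count then forces that multiple to vanish.

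\textbf{First step: the division identities.} For each $i$ write $b_i(x)=p_i(x)q_i(x)+r_i(x)$ with $r_i(x)=\rem{b_i(x)}{p_i(x)}$ and $\deg r_i(x)<\deg p_i(x)$. The crucial observation is that $p_i(x)g_i(x)=p_i(x)\prod_{j\neq i}p_j(x)=P(x)$ for every $i$, so that
$$
b_i(x)g_i(x)=q_i(x)\,p_i(x)g_i(x)+r_i(x)g_i(x)=q_i(x)P(x)+r_i(x)g_i(x).
$$
Summing over $i=1,\dots,n$ and using \eqref{lem_bez1} gives
$$
f(x)=P(x)\Big(\sum_{i=1}^{n}q_i(x)\Big)+\sum_{i=1}^{n}r_i(x)g_i(x).
$$

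\textbf{Second step: the degree argument.} Set $Q(x)=\sum_{i=1}^n q_i(x)$, so $f(x)-\sum_i r_i(x)g_i(x)=P(x)Q(x)$. By hypothesis $\deg f(x)<\deg P(x)$. Moreover $\deg\big(r_i(x)g_i(x)\big)<\deg p_i(x)+\deg g_i(x)=\deg P(x)$ for each $i$, hence $\deg\big(\sum_i r_i(x)g_i(x)\big)<\deg P(x)$. Therefore the left-hand side $f(x)-\sum_i r_i(x)g_i(x)$ has degree strictly less than $\deg P(x)$, while $P(x)Q(x)$ has degree $\deg P(x)+\deg Q(x)$ whenever $Q(x)\neq 0$. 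This forces $Q(x)=0$, and we conclude
$$
f(x)=\sum_{i=1}^{n}r_i(x)g_i(x)=\sum_{i=1}^{n}\big(\rem{b_i(x)}{p_i(x)}\big)g_i(x),
$$
which is \eqref{bez_0}.

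\textbf{Where the care is needed.} There is no deep obstacle here; the whole content is the identity $p_i(x)g_i(x)=P(x)$ together with the degree bookkeeping. The one point to state carefully is that $\deg r_i(x)<\deg p_i(x)$ (not merely $\deg r_i(x)\le\deg b_i(x)$), since it is precisely this bound that makes each term $r_i(x)g_i(x)$ a proper fraction's numerator relative to $P(x)$ and hence makes the sum of lower degree than $P(x)$. One should also note the convention that $\deg 0=-\infty$ (or simply treat the $Q(x)=0$ case separately) so that the dichotomy "$Q(x)=0$ or $\deg(P Q)\ge \deg P$" is valid.
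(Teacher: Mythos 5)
Your proof is correct and follows essentially the same route as the paper: divide each $b_i(x)$ by $p_i(x)$, use $p_i(x)g_i(x)=p_1(x)\cdots p_n(x)$ to collect the quotients into a single multiple of the full product, and conclude that $\sum_i q_i(x)=0$. The one substantive difference is that you justify this vanishing by the explicit degree comparison $\deg\bigl(f(x)-\sum_i r_i(x)g_i(x)\bigr)<\deg\bigl(p_1(x)\cdots p_n(x)\bigr)$, which is precisely the step the paper's proof leaves implicit (it merely asserts that the quotients sum to zero), so your version is the more complete argument.
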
 
\begin{proof} To prove (\ref{bez_0}) it suffices to show that the quotients of $b_{i}(x)$ when divided by $p_{i}(x)$ add up to zero.  That is, suppose we have
$$
\rem{b_{i}(x)}{p_{i}(x)}=b_{i}(x)-p_{i}(x)q_{i}(x)
$$
where $q_{i}(x)$ is the quotient. Substituting $q_{i}(x)$ into (\ref{bez_0}) gives us
$$
(b_{1}(x)-p_{1}(x)q_{1}(x))g_{1}(x)+\cdots+(b_{n}(x)-p_{n}(x)q_{n}(x))g_{n}(x)=1
$$
by (\ref{gj}) we have
$$
(q_{1}(x)+\cdots+q_{n}(x))\prod_{i=1}^{n}p_{i}(x)=0.
$$ 
Since $p_{i}(x)$ is non-zero polynomial, for $i=1,\cdots,n$, the equation (\ref{bez_0}) holds. 
\end{proof}

The advantage of having the above lemma is in simplifying the partial fraction expansion for complicated numerators. Now we prove our fundamental lemma that connects Bezout's identity and the $eval$ function. 

\begin{lem}\label{lem_bez2}[Fundamental Lemma]
Suppose $g_{j}(x)$ for $j=1,\cdots,n$ satisfy (\ref{gj}) and 
\begin{equation}\label{linear_eqn}
a_{1}(x)g_{1}(x)+\cdots+a_{n}(x)g_{n}(x)=f(x)
\end{equation}
holds for $\textnormal{deg }f(x)<\textnormal{deg }\prod_{j=1}^{n}p_{j}(x)$ and $\textnormal{gcd}(f(x),p_{i}(x))=1$ for all $i=1,2,\cdots,n$. Then, the following equation also holds:
\begin{equation}\label{bez_1}
\res{\frac{a_{1}(x)}{f(x)}}{p_{1}(x)}g_{1}(x)+\cdots+\res{\frac{a_{n}(x)}{f(x)}}{p_{n}(x)}g_{n}(x)=1.
\end{equation}
\end{lem}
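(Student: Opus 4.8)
The plan is to reduce equation~(\ref{bez_1}) to the already-established Lemma~\ref{lem_rem} by recognizing the coefficients $\res{a_j(x)/f(x)}{p_j(x)}$ as nothing more than the reduced coefficients one obtains after dividing the original $a_j(x)$ by $p_j(x)$ — provided we first clear the factor $f(x)$. First I would divide~(\ref{linear_eqn}) through by $f(x)$ in the function field $\mathbb{Q}(x)$, which is legitimate since $f(x)$ is a non-zero polynomial; this yields
$$
\frac{a_{1}(x)}{f(x)}g_{1}(x)+\cdots+\frac{a_{n}(x)}{f(x)}g_{n}(x)=1.
$$
Now I want to replace each rational coefficient $a_j(x)/f(x)$ by its $eval$ modulo $p_j(x)$, which is a genuine polynomial by Definition~\ref{eval} (here I use $\textnormal{gcd}(f(x),p_i(x))=1$, exactly the hypothesis supplied, so that $f(x)$ is invertible modulo each $p_i(x)$ and the $eval$ is well defined). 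The heart of the argument is to show this replacement preserves the identity.

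The key step is to run the same quotient-cancellation trick used in Lemma~\ref{lem_rem}, but applied to the polynomials $b_j(x) := \rem{(\alpha_j(x)a_j(x))}{p_j(x)}$, where $\alpha_j(x)s$ is the Bezout inverse of $f(x)$ modulo $p_j(x)$ so that $\res{a_j(x)/f(x)}{p_j(x)} = \rem{(\alpha_j(x)a_j(x))}{p_j(x)}$ by the explicit formula in Definition~\ref{eval}. Concretely: since $\alpha_j(x)f(x)\equiv 1 \pmod{p_j(x)}$, write $\alpha_j(x)f(x) = 1 + p_j(x)t_j(x)$; then
$$
\res{\frac{a_j(x)}{f(x)}}{p_j(x)} = \rem{(\alpha_j(x)a_j(x))}{p_j(x)} = \alpha_j(x)a_j(x) - p_j(x)c_j(x)
$$
for the appropriate quotient $c_j(x)$. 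Multiplying the $eval$-coefficient identity I am trying to prove by $f(x)$ and substituting these expressions, the $\alpha_j(x)f(x)$ terms become $1 + p_j(x)t_j(x)$, the $p_j(x)c_j(x)f(x)$ and $p_j(x)t_j(x)a_j(x)$ terms each carry a factor $p_j(x)$, and since $p_j(x)\,g_j(x) = \prod_{i=1}^n p_i(x) =: P(x)$, all these correction terms collapse into a single multiple of $P(x)$. What remains is exactly $\sum_j a_j(x)g_j(x) = f(x)$ from~(\ref{linear_eqn}), plus a term of the form $P(x)\cdot(\text{polynomial})$. Comparing degrees — the left side, after multiplication by $f(x)$, has degree $< \deg f(x) + \deg P(x) - \min_j\deg p_j(x) \le \deg f(x) + \deg P(x)$, while $f(x)$ itself has degree $< \deg P(x)$ — forces the $P(x)$-multiple to vanish, leaving the desired identity after dividing back out by $f(x)$.

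The main obstacle, and the place requiring the most care, is the bookkeeping of degrees that kills the spurious $P(x)$ term: one must confirm that each $\res{a_j(x)/f(x)}{p_j(x)}$ has degree strictly less than $\deg p_j(x)$ (immediate from the $\textnormal{rem}$ in Definition~\ref{eval}), so that $\res{a_j(x)/f(x)}{p_j(x)}\,g_j(x)$ has degree $< \deg P(x)$, and hence the whole left side of~(\ref{bez_1}) has degree $< \deg P(x)$; since the right side is the constant $1$, consistency is forced and no stray multiple of $P(x)$ can survive. An alternative, perhaps cleaner, route is to avoid the degree count entirely: apply Lemma~\ref{lem_rem} directly with $b_j(x) := \alpha_j(x)a_j(x)$, after first checking that $\sum_j \alpha_j(x)a_j(x)g_j(x) = f(x)\cdot(\text{something}) + \dots$ — but since the $\alpha_j$ differ across $j$, this does not immediately fit the template of Lemma~\ref{lem_rem}, so the self-contained quotient-cancellation computation above is the more robust plan. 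Either way, the essential mechanism is that reducing a Bezout coefficient modulo $p_j(x)$ changes the left-hand side only by a multiple of $\prod_i p_i(x)$, and such a multiple is invisible at the degrees in play.
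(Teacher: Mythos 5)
Your proof is essentially the paper's: both express $\res{\frac{a_j(x)}{f(x)}}{p_j(x)}$ through a Bezout inverse of $f(x)$ modulo $p_j(x)$, observe that passing from $a_j(x)$ to these reduced coefficients changes $\sum_j a_j(x)g_j(x)$ only by a multiple of $P(x)=\prod_i p_i(x)$, and then argue that this multiple must vanish; the paper merely packages the quotient-cancellation step as Lemma~\ref{lem_rem} and phrases the vanishing via localization at $f(x)$. One point to tighten: the degree comparison you state first (degree $<\deg f+\deg P$ for $f(x)$ times the left side, against $\deg f<\deg P$) does not by itself kill the spurious term --- from $f(x)\bigl(\textnormal{LHS}-1\bigr)=P(x)Q(x)$ it only yields $\deg Q<\deg f$, not $Q=0$. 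The correct finish is the one you sketch afterwards: $E(x):=\textnormal{LHS}-1$ is a polynomial of degree $<\deg P$ because each $\res{\frac{a_j(x)}{f(x)}}{p_j(x)}\,g_j(x)$ has degree $<\deg P$, and since $\gcd(f(x),P(x))=1$ the identity $f(x)E(x)=P(x)Q(x)$ forces $P(x)\mid E(x)$, hence $E(x)=0$.
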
 
\begin{proof}
As $\textnormal{gcd}(f(x),p_{i}(x))=1$ for $1\leq i \leq n$ there exist $\alpha_{i}(x),\beta_{i}(x)\in \mathbb{Q}[x]$ such that
$$
\alpha_{i}(x)p_{i}(x)+\beta_{i}(x)f(x)=1.
$$ 
By the definition of $eval$ function it suffices to prove the equation (\ref{bez_1}) with the factors $\res{\frac{a_{i}(x)}{f(x)}}{p_{i}(x)}$ replaced with $\rem{a_{i}(x)\beta_{i}(x)}{p_{i}(x)}$. Dividing both sides of the equation (\ref{linear_eqn}) by $f(x)$ and $\prod_{i=1}^{n}p_{i}(x)$ we have 
$$
\frac{1}{p_{1}(x)\cdots p_{n}(x)}=\frac{a_{1}(x)}{f(x)p_{1}(x)}+\cdots+\frac{a_{n}(x)}{f(x)p_{n}(x)}.
$$  
By multiplying in the numerator with the factor $(\alpha_{i}(x)p_{i}(x)+\beta_{i}(x)f(x))$, which essentially equals $1$, in the $i^{th}$ term we have
$$
\frac{1}{p_{1}(x)\cdots p_{n}(x)}=\sum^{n}_{i=1}\frac{a_{i}(x)(\alpha_{i}(x)p_{i}(x)+\beta_{i}(x)f(x))}{f(x)p_{i}(x)}.
$$ 
Upon simplifying and rearranging terms we have
$$
\frac{a_{1}(x)\alpha_{1}(x)+\cdots+a_{n}(x)\alpha_{n}(x)}{f(x)}=\frac{1}{p_{1}(x)\cdots p_{n}(x)}-\sum_{i=1}^{n}\frac{a_{i}(x)\beta_{i}(x)}{p_{i}(x)}
$$
i.e., left hand side is in $S_{\mathfrak{f}}^{-1}R$, the multiplicative set formed by $f(x)$ and yet has the denominator as $f(x)$. This is possible only if $a_{1}(x)\alpha_{1}(x)+\cdots+a_{n}(x)\alpha_{n}(x)=0$. Therefore, 
$$
\frac{1}{p_{1}(x)\cdots p_{n}(x)}=\frac{a_{1}(x)\beta_{1}(x)}{p_{1}(x)}+\cdots+\frac{a_{n}(x)\beta_{n}(x)}{p_{n}(x)}
$$
Using Lemma \ref{lem_rem} we have
$$
\frac{1}{p_{1}(x)\cdots p_{n}(x)}=\sum_{i=1}^{n}\frac{\rem{(a_{i}(x)\beta_{i}(x))}{p_{i}(x)}}{p_{i}(x)}=\sum_{i=1}^{n}\frac{\res{a_{i}(x)/f(x)}{p_{i}(x)}}{p_{i}(x)}
$$
Hence the lemma is proved. 
\end{proof}

Now, we are ready to prove the extended cover-up method alluded in the Introduction section.  

\begin{thm}[Extended Cover-Up Method]\label{cover_up}
Let $p_{1}(x),\cdots, p_{n}(x)$ be mutually coprime polynomials. Then, we have the partial fraction expansion 
$$
\frac{1}{p_{1}(x)\cdots p_{n}(x)}=\frac{k_{1}(x)}{p_{1}(x)}+\cdots+\frac{k_{n}(x)}{p_{n}(x)},
$$
where $k_{i}(x)=\res{\prod_{j=1,j\neq i}^{n}\frac{1}{p_{j}(x)}}{p_{i}(x)}$.
\end{thm}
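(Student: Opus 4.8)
The plan is to obtain Theorem~\ref{cover_up} as the special case $f(x)=1$ of the Fundamental Lemma (Lemma~\ref{lem_bez2}), followed by a short reduction-modulo-$p_i(x)$ computation that rewrites the resulting coefficients in the $eval$ form of the statement. I would start by setting $g_j(x)=\prod_{i\neq j}p_i(x)$ as in (\ref{gj}). Since $\mathbb{Q}[x]$ is a unique factorization domain and the $p_j(x)$ are pairwise coprime, no irreducible polynomial divides every $g_j(x)$ (an irreducible factor of $p_i(x)$ is absent from $g_i(x)$ but present in all other $g_j(x)$), so $\textnormal{gcd}(g_1(x),\dots,g_n(x))=1$ and Bezout's identity supplies $a_1(x),\dots,a_n(x)\in\mathbb{Q}[x]$ with $a_1(x)g_1(x)+\cdots+a_n(x)g_n(x)=1$.

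Next I would invoke Lemma~\ref{lem_bez2} with $f(x)=1$: its hypotheses $\textnormal{gcd}(f(x),p_i(x))=1$ and $\deg f(x)<\deg\prod_{j=1}^{n}p_j(x)$ hold trivially, the latter because the $p_j(x)$ are non-trivial so the product has positive degree. Since $a_i(x)/f(x)=a_i(x)$, the conclusion (\ref{bez_1}) reads
$$
\res{a_1(x)}{p_1(x)}\,g_1(x)+\cdots+\res{a_n(x)}{p_n(x)}\,g_n(x)=1,
$$
and by part (1) of Lemma~\ref{lem_res} we have $\res{a_i(x)}{p_i(x)}=(\rem{a_i(x)}{p_i(x)})$. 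Dividing the display by $p_1(x)\cdots p_n(x)$ and using $g_i(x)/\prod_j p_j(x)=1/p_i(x)$ yields the partial fraction expansion with $i$-th coefficient $(\rem{a_i(x)}{p_i(x)})$.

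It then remains to identify $(\rem{a_i(x)}{p_i(x)})$ with $k_i(x)$. Reducing $a_1(x)g_1(x)+\cdots+a_n(x)g_n(x)=1$ modulo $p_i(x)$ and using that $p_i(x)\mid g_j(x)$ for every $j\neq i$, I obtain $a_i(x)g_i(x)\equiv 1$ modulo $p_i(x)$; as $\textnormal{gcd}(g_i(x),p_i(x))=1$ this exhibits $a_i(x)$ as a representative of the inverse of $g_i(x)=\prod_{j\neq i}p_j(x)$ modulo $p_i(x)$. By Definition~\ref{eval}, $\res{\prod_{j\neq i}\frac{1}{p_j(x)}}{p_i(x)}=(\rem{\alpha(x)}{p_i(x)})$ for any $\alpha(x)$ with $\alpha(x)\prod_{j\neq i}p_j(x)\equiv 1$ modulo $p_i(x)$; taking $\alpha(x)=a_i(x)$ gives $k_i(x)=(\rem{a_i(x)}{p_i(x)})$, which closes the argument.

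I do not expect a genuine obstacle here: Lemma~\ref{lem_bez2} already carries the analytic weight, so the work is confined to verifying its hypotheses at $f(x)=1$ and to the bookkeeping translation between $\textnormal{rem}$ and $eval$ via Lemma~\ref{lem_res}(1) and Definition~\ref{eval}. The one point deserving explicit care is the well-definedness of each $k_i(x)$, namely that $\prod_{j\neq i}p_j(x)$ is a unit modulo $p_i(x)$ --- exactly the pairwise-coprimality hypothesis. As an independent check one may bypass the Fundamental Lemma altogether: put $P(x)=\sum_{i=1}^{n}k_i(x)g_i(x)$; then $\deg P(x)<\deg\prod_j p_j(x)$ since $\deg k_i(x)<\deg p_i(x)$ and $\deg g_i(x)=\deg\prod_j p_j(x)-\deg p_i(x)$, while for each $i$ we have $P(x)\equiv k_i(x)g_i(x)\equiv 1$ modulo $p_i(x)$ because $k_i(x)\equiv g_i(x)^{-1}$; hence $P(x)\equiv 1$ modulo $\prod_j p_j(x)$ by the Chinese Remainder Theorem, forcing $P(x)=1$, and dividing by $\prod_j p_j(x)$ recovers the claimed decomposition.
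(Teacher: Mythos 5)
Your proposal is correct and follows essentially the same route as the paper: both rest on the Bezout identity for the $g_j(x)$ and on the Fundamental Lemma \ref{lem_bez2}, differing only in bookkeeping (the paper feeds $f(x)=\sum_j a_j(x)g_j(x)$ into the lemma and reduces numerator and denominator via Lemma \ref{lem_res}(3), whereas you take $f(x)=1$ and then identify $a_i(x)$ directly as an inverse of $g_i(x)$ modulo $p_i(x)$). Your closing Chinese Remainder Theorem verification is a nice self-contained alternative, but your main argument coincides with the paper's.
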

\begin{proof}
The polynomials $g_{j}(x)=\prod^{n}_{i=1,i\neq j}p_{i}(x)$ for $j=1,\cdots,n$ are mutually coprime. So, there exist $a_{j}(x)\in \mathbb{Q}[x]$ for $1\leq j\leq n$ such that 
$$
a_{1}(x)g_{1}(x)+\cdots+a_{n}(x)g_{n}(x)=1,
$$ 
which translates to the partial fraction form as 
\begin{equation}\label{pf_basic1}
\frac{1}{p_{1}(x)\cdots p_{n}(x)}=\frac{a_{1}(x)}{p_{1}(x)}+\cdots+\frac{a_{n}(x)}{p_{n}(x)}.
\end{equation}
None of the polynomials $a_{j}(x)$ in (\ref{pf_basic1}) are zero. Otherwise, if $a_{i}(x)$ is zero for some $i$ then the left hand side of (\ref{pf_basic1}) does not belong to $S^{-1}_{\mathfrak{p}_{i}}\mathbb{Q}[x]$ but the right hand side belongs to $S^{-1}_{\mathfrak{p}_{i}}\mathbb{Q}[x]$;  a contradiction. 

By Lemma \ref{lem_bez2} and as each $a_{i}(x)$ is non-zero polynomial we can write the above equation as  
$$
\sum^{n}_{i=1}\res{\frac{a_{i}(x)}{a_{1}(x)g_{1}(x)+\cdots+a_{n}(x)g_{n}(x)}}{p_{i}(x)}g_{i}(x)=1
$$ 
By Lemma \ref{lem_res}(3) we have 
$$
\sum^{n}_{i=1}\res{\frac{a_{i}(x)\mod p_{i}(x)}{a_{1}(x)g_{1}(x)+\cdots+a_{n}(x)g_{n}(x)\mod p_{i}(x)}}{p_{i}(x)}g_{i}(x)=1
$$ 
As $g_{j}(x)\mod p_{i}(x)=0$ if $i\neq j$ and $a_{j}(x)$ non-zero for all $j=1,\cdots,n$
$$
\res{\frac{1}{g_{1}(x)}}{p_{1}(x)}g_{1}(x)+\cdots+\res{\frac{1}{g_{n}(x)}}{p_{n}(x)}g_{n}(x)=1
$$
Hence the result is proved. 
\end{proof}

\begin{exmp}
We determine the partial fraction of $\frac{1}{(x^2+a^2)(x^3+b^3)}$, for arbitrary but fixed $a,b>0$, using our method. Let the partial fraction be given by
$$
\frac{1}{(x^2+a^2)(x^3+b^3)} = \frac{a_{1}(x)}{(x^2+a^2)}+\frac{a_{2}(x)}{(x^3+b^3)}
$$
we now evaluate for $a_{1}(x)$ and $a_{2}(x)$. 
\begin{eqnarray}
\nonumber a_{1}(x)&=&\res{\frac{1}{x^{3}+b^{3}}}{x^{2}+a^{2}} = \res{\frac{1}{x^{3}+b^{3}}\times \frac{x^{3}-b^{3}}{x^{3}-b^{3}}}{x^{2}+a^{2}}\\
\nonumber &=& \res{\frac{x^{3}-b^{3}}{x^{6}-b^{6}}}{x^{2}+a^{2}}= \frac{a^{2}x+b^{3}}{a^{6}+b^{6}}.
\end{eqnarray}
\begin{eqnarray}
\nonumber a_{2}(x)&=&\res{\frac{1}{x^{2}+a^{2}}\times \frac{x^{4}-a^{2}x^{2}+a^{4}}{x^{4}-a^{2}x^{2}+a^{4}}}{x^{3}+b^{3}}= \frac{-a^{2}x^{2}-b^{3}x-a^{4}}{b^{6}+a^{6}}
\end{eqnarray}
Hence, the partial fraction is 
$$
\frac{1}{(x^2+a^2)(x^3+b^3)}=\frac{1}{a^{6}+b^{6}}\left(\frac{a^{2}x+b^{3}}{x^{2}+a^{2}}+\frac{-a^{2}x^{2}-b^{3}x-a^{4}}{x^{3}+b^{3}}\right)
$$
Note that one can factorize $(x^{3}+b^{3})$ and follow the above method to perform further partial fraction decomposition. 
\end{exmp}
%
%\[\begin{tikzcd}
%A_{f} \arrow{r}{\varphi_f} \arrow[swap]{d}{\varrho_x^{f}} & B_{g}\arrow{d}{\varrho_x^{g}}\\
%A_{x} \arrow{r}{\varphi_{y}} & B_{y}
%\end{tikzcd}
%\]

\section{Partial Fraction Identities of Cyclotomic Polynomials}\label{CP}

In this section, we prove two main partial fraction results. Our first aim is to prove a partial fraction identity for a proper fraction of the general form
$$
f(x)\prod_{j=1}^{k}\frac{1}{\Phi_{d_{j}}(x)}=\frac{f(x)}{\Phi_{d_{1}}(x)\cdots\Phi_{d_{k}}(x)}, 
$$ 
for $f(x)\in \mathbb{Q}[x]$ and distinct positive integers $d_{1},\cdots, d_{k}$. Subsequently, state some corollaries and examples of this result. The strategy we employ to prove the first identity is to apply a `derivative' on the factorization of a polynomial to express it in the Bezout's identity form and then apply the fundamental lemma proved in the previous section. 

The derivative operator of our interest is the Cauchy-Euler operator of the form $(xD-m)$. These operators play a very crucial role in generating function theory and combinatorics due to their good algebraic properties. 

We now consider the partial fractions of  
$$
f(x)\prod_{j=1}^{k}\frac{1}{\Phi_{d_{j}}(x)}, \textnormal{  for distinct and positive } d_{1},\cdots, d_{k}.
$$ 
Let $m=\textnormal{lcm}(d_{1},\cdots,d_{k})$ and $g(x)$ be a multiplier such that 
\begin{equation}\label{m_equation}
g(x)\Phi_{d_{1}}(x)\cdots \Phi_{d_{k}}(x)=1-x^{m}.
\end{equation}
i.e., $g(x)=\prod_{t\in A}\Phi_{t}(x)$ where $A$ is the set of all divisors of $m$ excluding $\{d_{1},\cdots,d_{k}\}$. Also, suppose $f(x)\in \mathbb{Q}[x]$ with degree less than $\prod_{j=1}^{k}\Phi_{d_{j}}(x)$ and $\textnormal{gcd}(f(x),\Phi_{d_{j}}(x))=1$ for all $j=1,\cdots,k$.
Applying the operator $(xD-m)$ both sides of (\ref{m_equation}) we get 
$$
xg'(x)\prod_{j=1}^{k}\Phi_{d_{j}}(x)+\sum_{j=1}^{k}xg(x)\Phi'_{d_{j}}(x)\prod_{i\neq j}\Phi_{d_{i}}(x)-mg(x)\prod_{j=1}^{k}\Phi_{d_{i}}(x)= -m.
$$
Rearranging terms,
$$
\sum_{j=1}^{k}xg(x)\Phi'_{d_{j}}(x)\prod_{i\neq j}\Phi_{d_{i}}(x)= -m-(xg'(x)-m)\prod_{j=1}^{k}\Phi_{d_{j}}(x).
$$
Multiplying both sides by $f(x)$ and applying the Fundamental Lemma \ref{lem_bez2} we get 
$$
\sum_{j=1}^{k} \res{\frac{f(x)g(x)x\Phi'_{d_{j}}}{-m-(xg'(x)-m)\prod_{j=1}^{k}\Phi_{d_{j}}(x)}}{\Phi_{d_j}(x)}\prod_{i\neq j}\Phi_{d_{i}}(x)=f(x).
$$
By Lemma \ref{lem_res}(3) we have the final partial fraction decomposition 
$$
\sum_{j=1}^{k}\frac{\rem{(-f(x)g(x)x\Phi'_{d_{j}}(x))}{\Phi_{d_j}}(x)}{\Phi_{d_{j}}(x)}=mf(x)\prod_{j=1}^{k}\frac{1}{\Phi_{d_{j}}(x)}.
$$
The above discussion gives us the following main decomposition result. 
\begin{thm}\label{main1}
Let $d_{1},\cdots,d_{k}$ be distinct positive integers and $m$ be their least common multiple. Suppose $f(x)\in \mathbb{Q}[x]$ is a polynomial with $\textnormal{ deg }f(x)<\sum_{j=1}^{k}\phi(d_{j})$ and $f(x)$ does not have any of its roots as a $m^{th}$ root of unity. Then, we have
\begin{equation}
f(x)\prod_{j=1}^{k}\frac{1}{\Phi_{d_{j}}(x)}=\sum^{k}_{j=1}\frac{\rem{\left(-f(x)g(x)x\Phi'_{d_{j}}(x)\right)}{\Phi_{d_{j}}(x)}}{m\Phi_{d_{j}}(x)}
\end{equation}
where $g(x)=(1-x^{m})/(\prod_{j=1}^{k}\Phi_{d_{j}}(x))$.
\end{thm}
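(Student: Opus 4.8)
The plan is to convert the cyclotomic factorization (\ref{m_equation}) into a Bezout identity among the partial products $g_{j}(x):=\prod_{i\neq j}\Phi_{d_{i}}(x)$ and then extract the partial fraction by reducing modulo the individual $\Phi_{d_{j}}(x)$ (equivalently, by the Fundamental Lemma, Lemma \ref{lem_bez2}, together with Lemma \ref{lem_rem}). The device that manufactures the Bezout identity is the Cauchy-Euler operator $(xD-m)$: since $x^{m}$ is an eigenfunction of $xD$ with eigenvalue $m$, applying $(xD-m)$ to $1-x^{m}$ kills the $x^{m}$ term and leaves a constant, $(xD-m)(1-x^{m})=-mx^{m}-m(1-x^{m})=-m$, while on the product side the Leibniz rule spreads the operator across the factors so that, modulo any prescribed $\Phi_{d_{i}}(x)$, all summands but one disappear.

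Concretely, I would apply $(xD-m)$ to both sides of (\ref{m_equation}) and expand the product side:
\[
xg'(x)\prod_{j=1}^{k}\Phi_{d_{j}}(x)+\sum_{j=1}^{k}xg(x)\Phi'_{d_{j}}(x)\,g_{j}(x)-mg(x)\prod_{j=1}^{k}\Phi_{d_{j}}(x)=-m,
\]
and then rearrange into the Bezout-type identity
\[
\sum_{j=1}^{k}(xg(x)\Phi'_{d_{j}}(x))\,g_{j}(x)=P(x),\qquad P(x):=-m-(xg'(x)-mg(x))\prod_{j=1}^{k}\Phi_{d_{j}}(x).
\]
The only feature of $P(x)$ I will use is that $\prod_{l}\Phi_{d_{l}}(x)\equiv 0\pmod{\Phi_{d_{i}}(x)}$ forces $P(x)\equiv -m\pmod{\Phi_{d_{i}}(x)}$ for every $i$; the exact form of $P(x)$ is irrelevant.

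Next I would multiply through by $f(x)$, obtaining the polynomial identity $\sum_{j}(f(x)xg(x)\Phi'_{d_{j}}(x))\,g_{j}(x)=f(x)P(x)$, and replace each numerator by its remainder modulo $\Phi_{d_{j}}(x)$. By Lemma \ref{lem_rem} this does not disturb the identity, and the right-hand side becomes $\rem{(f(x)P(x))}{\prod_{l}\Phi_{d_{l}}(x)}$, which equals $-m\,f(x)$: indeed $f(x)P(x)=-m\,f(x)$ plus a multiple of $\prod_{l}\Phi_{d_{l}}(x)$, and $\deg(-m\,f)=\deg f<\sum_{j}\phi(d_{j})=\deg\prod_{l}\Phi_{d_{l}}$. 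Hence
\[
\sum_{j=1}^{k}\rem{(-f(x)\,x\,g(x)\,\Phi'_{d_{j}}(x))}{\Phi_{d_{j}}(x)}\;g_{j}(x)=m\,f(x),
\]
and dividing by $m\prod_{l}\Phi_{d_{l}}(x)$ and using $g_{j}(x)/\prod_{l}\Phi_{d_{l}}(x)=1/\Phi_{d_{j}}(x)$ yields exactly the asserted decomposition. The hypothesis that $f$ has no $m$-th root of unity among its roots, i.e.\ $\gcd(f,\Phi_{d_{j}})=1$ for every $j$, ensures that each term is a genuine proper fraction and that the $eval$'s in the intermediate steps are defined on the right localizations; it is not strictly needed for the identity itself.

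The step I expect to be the main obstacle is the one invoking Lemma \ref{lem_rem}: the passage from $\sum_{j}(f\,x\,g\,\Phi'_{d_{j}})g_{j}=f(x)P(x)$ to the identity in which every numerator is replaced by its remainder modulo $\Phi_{d_{j}}(x)$. The substance is that the quotients one throws away combine into a polynomial multiple of $\prod_{l}\Phi_{d_{l}}(x)$, whereas the reduced left-hand side has degree strictly below $\deg\prod_{l}\Phi_{d_{l}}$; matching this against $\rem{(f(x)P(x))}{\prod_{l}\Phi_{d_{l}}(x)}$ forces the discarded multiple to be zero. This is the one place where the degree bound $\deg f<\sum_{j}\phi(d_{j})$ is indispensable --- without it a spurious polynomial part survives. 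Everything else --- the Cauchy-Euler computation and the collapse of $P(x)$ to the constant $-m$ --- is routine.
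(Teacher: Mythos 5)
Your proposal is correct and follows essentially the same route as the paper: applying the Cauchy--Euler operator $(xD-m)$ to $g(x)\prod_{j}\Phi_{d_{j}}(x)=1-x^{m}$ to manufacture a Bezout-type identity, multiplying by $f(x)$, and reducing each summand modulo $\Phi_{d_{j}}(x)$ using the fact that the right-hand side is congruent to $-m$. The only (cosmetic) difference is that you justify the final reduction via Lemma \ref{lem_rem} together with a direct degree comparison, whereas the paper routes this step through the Fundamental Lemma \ref{lem_bez2} and the $eval$ function before invoking Lemma \ref{lem_res}(3); both are sound, and your side remark that the coprimality hypothesis is not needed for the remainder-form identity itself is accurate.
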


Theorem \ref{main1} is a very general case. Specifically, if we consider $\{d_{i}:d_{i}|m\}$ the set of all divisors of $m$ and $f(x)=x^{s}$ we have the following corollary. 
\begin{cor} For a positive integer $m$ and a non-negative integer $s$ with $0\leq s<m$
\begin{equation}\label{divisors}
\frac{x^s}{\Phi_{1}(x)}\prod_{d|m, d\neq 1}\frac{1}{\Phi_{d}(x)}= \frac{1}{m\Phi_{1}(x)}+\sum_{d|m,d\neq 1}\frac{(-x^{s+1}\Phi_{d}'(x)) \textnormal{ rem } \Phi_{d}(x)}{m\Phi_{d}(x)}.
\end{equation}
\end{cor}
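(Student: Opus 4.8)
The plan is to obtain this as a direct specialization of Theorem \ref{main1}. First I would take the index set $\{d_{1},\dots,d_{k}\}$ in Theorem \ref{main1} to be the full set of positive divisors of $m$ (so that $\Phi_{1}(x)=1-x$ is itself one of the factors), and take $f(x)=x^{s}$. Then $\textnormal{lcm}$ of all the divisors of $m$ is $m$ itself, and the multiplier $g(x)$ defined by $g(x)=(1-x^{m})/\prod_{d\mid m}\Phi_{d}(x)$ is simply the constant polynomial $1$, since $1-x^{m}=\prod_{d\mid m}\Phi_{d}(x)$. Consequently $g'(x)=0$, so the factor $x g(x)\Phi_{d}'(x)$ appearing in the numerators of Theorem \ref{main1} collapses to $x\Phi_{d}'(x)$, giving numerators $\rem{(-x^{s}\cdot x\,\Phi_{d}'(x))}{\Phi_{d}(x)}=\rem{(-x^{s+1}\Phi_{d}'(x))}{\Phi_{d}(x)}$.

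Next I would check that the hypotheses of Theorem \ref{main1} hold. The degree condition $\textnormal{deg }f(x)<\sum_{d\mid m}\phi(d)$ becomes $s<\sum_{d\mid m}\phi(d)=m$, which is exactly the standing assumption $0\le s<m$ together with the classical identity $\sum_{d\mid m}\phi(d)=m$. The root condition is immediate: the only root of $f(x)=x^{s}$ is $0$ when $s\ge 1$ (and $f$ has no roots at all when $s=0$), and $0$ is never an $m^{th}$ root of unity. Hence Theorem \ref{main1} applies and yields
$$
x^{s}\prod_{d\mid m}\frac{1}{\Phi_{d}(x)}=\sum_{d\mid m}\frac{\rem{(-x^{s+1}\Phi_{d}'(x))}{\Phi_{d}(x)}}{m\,\Phi_{d}(x)}.
$$

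Finally I would split off the $d=1$ term. Here $\Phi_{1}(x)=1-x$, so $\Phi_{1}'(x)=-1$ and the corresponding numerator is $\rem{(x^{s+1})}{(1-x)}$; since this remainder is a constant and every power of $x$ is congruent to $1$ modulo $1-x$, it equals $1$. Thus the $d=1$ contribution is exactly $\dfrac{1}{m\,\Phi_{1}(x)}$. Moving it to the front and splitting the left-hand product as $\dfrac{x^{s}}{\Phi_{1}(x)}\prod_{d\mid m,\,d\neq 1}\dfrac{1}{\Phi_{d}(x)}$ gives precisely the claimed identity \eqref{divisors}. There is essentially no genuine obstacle here beyond bookkeeping: the three points that need a moment's care are recognizing that $g(x)\equiv 1$ forces the $g'$-term to vanish, invoking $\sum_{d\mid m}\phi(d)=m$ to verify the degree bound, and evaluating the degenerate $d=1$ remainder.
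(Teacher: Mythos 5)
Your proof is correct and follows exactly the route the paper intends: the corollary is obtained by specializing Theorem \ref{main1} to the full set of divisors of $m$ with $f(x)=x^{s}$, which is precisely your argument. You have merely made explicit the details the paper leaves implicit, namely that $g(x)\equiv 1$, that the degree bound follows from $\sum_{d\mid m}\phi(d)=m$, and that the $d=1$ numerator reduces to the constant $1$.
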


\begin{rema}
For $s=0$ case by the result above we can easily deduce a direct \textit{derivative of logarithm} approach to arrive at the partial fraction identity
\begin{small}
$$
\frac{m}{1-x^{m}}=\left.y\frac{d}{dy}\log \prod_{d|m} \Phi_{d}(y)\right\vert_{y=1/x}. 
$$
\end{small}
\end{rema}

\begin{exmp}
We apply the Theorem \ref{main1} to perform partial fractions of 
$$
\frac{1}{\Phi_{1}(x)\Phi_{p}(x)\Phi_{q}(x)} \textnormal{  for distinct primes } p \textnormal{  and }q.
$$
Here $
g(x)=\frac{1-x^{pq}}{\Phi_{1}(x)\Phi_{p}(x)\Phi_{q}(x)}=\Phi_{pq}(x),
$ by Lenster's formula for $\Phi_{pq}(x)$\cite{Moree}:
$$
\Phi_{pq}(x)=\sum_{i=0}^{\rho}\sum_{j=0}^{\sigma}x^{ip+jq}-x\sum_{i=\rho+1}^{q-2-\rho}\sum_{j=\sigma+1}^{p-2-\sigma}x^{ip+jq}
$$
where $\rho$ and $\sigma$ are the unique non-negative integers for which $pq+1=(\rho+1)p+(\sigma+1)q.$ 
\begin{eqnarray}
\nonumber \frac{1}{\Phi_{1}(x)\Phi_{p}(x)\Phi_{q}(x)}&=&\frac{1}{pq\Phi_{1}(x)}+\frac{\rem{(-x\Phi_{pq}(x)\Phi'_{p}(x))}{\Phi_{p}(x)}}{pq\Phi_{p}(x)}\\
\nonumber & & \quad\quad\quad\quad\quad\quad+ \frac{\rem{(-x\Phi_{pq}(x)\Phi'_{q}(x))}{\Phi_{q}(x)}}{pq\Phi_{q}(x)}
\end{eqnarray}
\end{exmp}

\begin{exmp}
Let $s<2^{m}$. Then we have 
\begin{equation}\label{main_eq}
\frac{x^{s}}{\Phi_{1}(x)}\prod_{j=1}^{m}\frac{2}{\Phi_{2^{j}}(x)}=\frac{1}{\Phi_{1}(x)}+\sum_{j=1}^{m}\frac{(-1)^{\lfloor s/2^{j-1}\rfloor}2^{j-1}x^{s\%2^{j-1}}}{\Phi_{2^{j}}(x)}
\end{equation}
where the value $\lfloor \frac{m}{2^{j-1}}\rfloor$ is the largest positive integer less than or equal to $m/2^{j-1}$ and $(m\% 2^{j-1})$ stands for the remainder on dividing $m$ by $2^{j-1}$.

Specifically, for $m=11$ and $n=4$ the identity (\ref{main_eq}) becomes 
\begin{eqnarray}
\nonumber \frac{2^{4}x^{11}}{1-x^{16}}&=& \frac{2^{4}x^{11}}{(1-x)(1+x)(1+x^2)(1+x^4)(1+x^8)}\\ 
\nonumber                            &=& \frac{1}{1-x}-\frac{1}{1+x}-\frac{2x}{1+x^2}+\frac{2^2 x^{3}}{1+x^4}-\frac{2^3 x^3}{1+x^8}.
\end{eqnarray} 
It is interesting to note that the partial fraction expansion for the case $2^{m}$ follows the  binary representation of the integer $m$. In the binary representation of $m=(b_{n}\cdots b_{1})_{2}$ if $b_{j}$ is $0$ then the term corresponding to $(1+x^{2^{j-1}})$ has a positive sign and a negative sign otherwise. Thus, corresponding to the binary representation (with $n=4$) of $m=(11)_{10}=(1011)_{2}$ the signs in the above example are $(- - + -)$ in the reversed order. To explain the sign change, let us represent $m$ in binary representation. Suppose, $m=(b_{n}\cdots b_{1})_{2}$ then in modulo $x^{2^{k-1}}+1$ (i.e., substituting for $x^{2^{k-1}}$ by $-1$) we have 
\begin{eqnarray}
\nonumber x^{m} &=&x^{b_{n}2^{n-1}}x^{b_{n-1}2^{n-2}}\cdots x^{b_{k}2^{k-1}} x^{b_{k-1}2^{k-2}}\cdots x^{b_{1}2^{0}} \\
\nonumber &=& 1\ \ \times\ \  1\times \cdots  \cdots \times (x^{2^{k-1}})^{b_{k}} x^{b_{k-1}2^{k-2}}\cdots x^{b_{1}2^{0}} \\
\nonumber &=& \left\{
     \begin{array}{@{}l@{\thinspace}l}
       -x^{m \%2^{k-1}}  & \text{if   } b_{k}=1 \\
       \;\;\;x^{m \%2^{k-1}}  & \text{if   } b_{k}=0. \\
     \end{array}
   \right.
\end{eqnarray}
\end{exmp}

%\begin{exmp}
%For $m=6$ and $s=0$ in (\ref{divisors}) we have 
%\begin{eqnarray}
%\nonumber \frac{6}{1-x^{6}}&=& \frac{6}{(1-x)(1+x)(1+x+x^{2})(1-x+x^{2})}\\
%\nonumber &=& \frac{1}{1-x}+\frac{1}{1+x}+\frac{2+x}{1+x+x^{2}}+\frac{2-x}{1-x+x^{2}}
%\end{eqnarray}
%\end{exmp}

\subsection{Preparation Lemmas for the $q$-Partial Fractions}\label{di_lemma}
We prove two lemmas for deriving the partial fractions of (\ref{n_k}) 
$$
\frac{1}{\Phi_{1}(x)^{k}\Psi_{n}(x)}=\frac{1}{(1-x)^{k}\Psi_{n}(x)}\quad\quad \textnormal{ for } k\geq 1.
$$ 
First, we consider the case $k=1$. We start with applying the operator $xD$ both sides of the equation 
$$
(1-x)\Psi_{m}(x)=1-x^{m}
$$
to get 
$$
x\Psi'_{m}(x)(1-x)+(-x)\Psi_{m}(x)-m(1-x^{m})=-m.
$$
Applying the Fundamental Lemma \ref{lem_res} we get 
$$
\res{\frac{x\Psi'_{m}(x)}{-mx^{m}}}{\Psi_{m}(x)}(1-x)+\res{\frac{-x}{-mx^{m}}}{(1-x)}\Psi_{m}(x)=1,
$$
Simplifying we get the partial fraction  
\begin{equation}\label{pf_basic}
\frac{1}{(1-x)\Psi_{m}(x)}=\frac{1}{m(1-x)}+\frac{\rem{(-\frac{1}{m}x\Psi'_{m}(x))}{\Psi_{m}(x)}}{\Psi_{m}(x)}.
\end{equation}
Proceeding inductively we can prove the following result.
\begin{lem}\label{pow_k}
For $k\geq 1$ the following partial fraction holds:
\begin{equation}\label{pf_k}
\frac{1}{(1-x)^{k}\Psi_{m}(x)} =\frac{1}{m}\sum_{j=0}^{k-1}\frac{f_{j}^{(m)}(1)}{(1-x)^{k-j}}+\frac{f_{k}^{(m)}(x)}{\Psi_{m}(x)},
\end{equation}
where $f_{j}^{(m)}(x)=\rem{(-\frac{1}{m}x\Psi'_{m}(x))^{j}}{\Psi_{m}(x)}$ and $f_{0}^{(m)}(x)=1$.
\end{lem}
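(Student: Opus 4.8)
The plan is to prove Lemma~\ref{pow_k} by induction on $k$, using the base case $k=1$ which is exactly the partial fraction \eqref{pf_basic} established just above (noting that for $k=1$ the sum $\sum_{j=0}^{0}$ contributes only the $j=0$ term $\frac{f_0^{(m)}(1)}{m(1-x)}=\frac{1}{m(1-x)}$ since $f_0^{(m)}(x)=1$, and the remaining term is $\frac{f_1^{(m)}(x)}{\Psi_m(x)}$ with $f_1^{(m)}(x)=\rem{(-\frac{1}{m}x\Psi_m'(x))}{\Psi_m(x)}$). So the base case is immediate.

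For the inductive step, assume \eqref{pf_k} holds for some $k\geq 1$; I would multiply both sides by $\frac{1}{1-x}$ to get
$$
\frac{1}{(1-x)^{k+1}\Psi_m(x)}=\frac{1}{m}\sum_{j=0}^{k-1}\frac{f_j^{(m)}(1)}{(1-x)^{k+1-j}}+\frac{1}{1-x}\cdot\frac{f_k^{(m)}(x)}{\Psi_m(x)}.
$$
The first sum already has the desired shape (its $j$ ranges over $0,\dots,k-1$ and the exponents $k+1-j$ run over $2,\dots,k+1$), so the work is entirely in decomposing the last term $\frac{f_k^{(m)}(x)}{(1-x)\Psi_m(x)}$. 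Here I would apply the extended cover-up method (Theorem~\ref{cover_up}), or more directly re-run the one-step argument that produced \eqref{pf_basic}: since $\gcd(1-x,\Psi_m(x))=1$, write
$$
\frac{f_k^{(m)}(x)}{(1-x)\Psi_m(x)}=\frac{A}{1-x}+\frac{g(x)}{\Psi_m(x)},\qquad A=\res{\frac{f_k^{(m)}(x)}{\Psi_m(x)}}{1-x}.
$$
Evaluating the $eval$ at the polynomial $1-x$ means substituting $x=1$ (Lemma~\ref{lem_res}(1) together with $\rem{\cdot}{(1-x)}$ being evaluation at $1$), so $A=f_k^{(m)}(1)/\Psi_m(1)$. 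Since $\Psi_m(1)=m$ (as $\Psi_m(x)=1+x+\cdots+x^{m-1}$), we get $A=f_k^{(m)}(1)/m$, which contributes the missing $j=k$ term $\frac{1}{m}\cdot\frac{f_k^{(m)}(1)}{1-x}$ to the sum, extending its range to $0,\dots,k$ as required.

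It then remains to identify the residual numerator over $\Psi_m(x)$ as $f_{k+1}^{(m)}(x)=\rem{(-\frac{1}{m}x\Psi_m'(x))^{k+1}}{\Psi_m(x)}$. From the cover-up formula the numerator over $\Psi_m(x)$ is $\res{\frac{f_k^{(m)}(x)}{1-x}}{\Psi_m(x)}$, and by the multiplicativity property Lemma~\ref{lem_res}(2) this equals $\rem{\big(f_k^{(m)}(x)\cdot\res{\frac{1}{1-x}}{\Psi_m(x)}\big)}{\Psi_m(x)}$. The key computational fact is that $\res{\frac{1}{1-x}}{\Psi_m(x)}=\rem{(-\frac{1}{m}x\Psi_m'(x))}{\Psi_m(x)}=f_1^{(m)}(x)$; this is precisely the identity extracted from applying $xD$ to $(1-x)\Psi_m(x)=1-x^m$ (the same manipulation used for \eqref{pf_basic}, since $\res{\frac{-x}{-mx^m}}{(1-x)}$ and $\res{\frac{x\Psi_m'(x)}{-mx^m}}{\Psi_m(x)}$ appear there; modulo $\Psi_m(x)$ one has $x^m\equiv 1$ so $-mx^m\equiv -m$). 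Hence the residual numerator is $\rem{\big(f_k^{(m)}(x)f_1^{(m)}(x)\big)}{\Psi_m(x)}=\rem{\big((-\frac{1}{m}x\Psi_m'(x))^{k}(-\frac{1}{m}x\Psi_m'(x))\big)}{\Psi_m(x)}=f_{k+1}^{(m)}(x)$, where the middle equality again uses Lemma~\ref{lem_res}(2) to collapse the product of remainders into the remainder of the product. Assembling the two pieces completes the induction.

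The main obstacle I anticipate is the bookkeeping around the $eval$ of $1/(1-x)$ modulo $\Psi_m(x)$: one must justify carefully that it equals $f_1^{(m)}(x)$ and that powers behave multiplicatively under $\rem{\cdot}{\Psi_m(x)}$, so that $f_k^{(m)}(x)f_1^{(m)}(x)\equiv f_{k+1}^{(m)}(x)\pmod{\Psi_m(x)}$. Everything else—the index shift in the finite sum, the base case, the evaluation at $x=1$ giving the factor $1/m$—is routine once Lemma~\ref{lem_res} and the relation $x\Psi_m'(x)(1-x)-x\Psi_m(x)=-m(1-x^m)$ are in hand.
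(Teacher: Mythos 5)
Your proposal is correct and follows essentially the same route as the paper: induction on $k$ with base case \eqref{pf_basic}, multiplying by $\frac{1}{1-x}$, and then splitting the residual term $\frac{f_k^{(m)}(x)}{(1-x)\Psi_m(x)}$ into $\frac{f_k^{(m)}(1)}{m(1-x)}+\frac{f_{k+1}^{(m)}(x)}{\Psi_m(x)}$. The only difference is that you spell out the details of that last split (the evaluation at $x=1$ with $\Psi_m(1)=m$, the identity $\res{\frac{1}{1-x}}{\Psi_m(x)}=f_1^{(m)}(x)$, and the multiplicativity from Lemma~\ref{lem_res}(2)), which the paper compresses into the phrase ``by performing partial fractions on the second term.''
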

\begin{proof}
We prove by induction on $k$. The partial fraction (\ref{pf_basic}) is the base case $k=1$. Suppose the result holds for $k$.  Multiplying both sides of (\ref{pf_k}) by $\frac{1}{1-x}$we get 
$$
\frac{1}{(1-x)^{k+1}\Psi_{m}(x)} =\frac{\sum_{j=0}^{k-1}f_{j}^{(m)}(1)(1-x)^{j}}{m(1-x)^{k+1}}+\frac{f_{k}^{(m)}(x)}{(1-x)\Psi_{m}(x)}
$$
By performing partial fractions on the second term we have 
\begin{eqnarray}
\nonumber \frac{1}{(1-x)^{k+1}\Psi_{m}(x)} &=& \frac{\sum_{j=0}^{k-1}f_{j}^{(m)}(1)(1-x)^{j}}{m(1-x)^{k+1}}+\frac{f_{k}^{(m)}(1)}{m(1-x)}+\frac{f_{k+1}^{(m)}(x)}{\Psi_{m}(x)}\\
\nonumber &=& \frac{\sum_{j=0}^{k}f_{j}^{(m)}(1)(1-x)^{j}}{m(1-x)^{k+1}}+\frac{f_{k+1}^{(m)}(x)}{\Psi_{m}(x)}
\end{eqnarray}
Hence the result is proved. 
\end{proof}
We summarize the above result as:   
\begin{equation}\label{res_1x}
\res{\frac{1}{\Psi_{m}(x)}}{(1-x)^{k}}=\frac{1}{m}\sum_{j=0}^{k-1}f_{j}^{(m)}(1)(1-x)^{j}
\end{equation}
and
\begin{equation}\label{res_psi}
\res{\frac{1}{(1-x)^{k}}}{\Psi_{m}(x)} = f_{k}^{(m)}(x).
\end{equation}
where $f_{j}^{(m)}(x)=\rem{(-\frac{1}{m}x\Psi'_{m}(x))^{j}}{\Psi_{m}(x)}$ and $f_{0}^{(m)}(x)=1$.

We can also prove a preparation result for the general cyclotomic polynomial $\Phi_{a}(x)$ similar to Lemma \ref{pow_k}. 
\begin{lem}\label{cyclo_pow_k}
For $k\geq 1$ and $a>1$ the following partial fraction holds:
\begin{equation}\label{pf_cylco1}
\frac{1}{(1-x)^{k}\Phi_{a}(x)} =\frac{1}{\Phi_{a}(1)}\sum_{j=0}^{k-1}\frac{h^{(a)}_{j}(1)}{(1-x)^{k-j}}+\frac{h^{(a)}_{k}(x)}{\Phi_{a}(x)},
\end{equation}
where $h^{(a)}_{j}(x)=\rem{(-\frac{1}{a}x\tilde{\Phi}_{a}(x)\Phi'_{a}(x))^{j}}{\Phi_{a}(x)}, h^{(a)}_{0}(x)=1$, and $\\ \Phi_{1}(x)\Phi_{a}(x)\tilde{\Phi}_{a}(x)=1-x^{a}$.
\end{lem}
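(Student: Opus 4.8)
The plan is to run the same inductive scheme as in the proof of Lemma~\ref{pow_k}, with the binomial identity $(1-x)\Psi_m(x)=1-x^m$ there replaced by the factorization $\Phi_1(x)\Phi_a(x)\tilde{\Phi}_a(x)=1-x^a$, and with the scalar $m$ replaced by $\Phi_a(1)$. The reason the extra factor $\tilde{\Phi}_a(x)$ has to appear inside $h^{(a)}_j(x)$ is precisely that $\Phi_a(x)$ alone is not the $(1-x)$-complement of a binomial $1-x^a$, whereas $\Phi_a(x)\tilde{\Phi}_a(x)$ is; apart from carrying this extra factor around, every step is the one used for Lemma~\ref{pow_k}.

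For the base case $k=1$ it suffices, by the extended cover-up Theorem~\ref{cover_up} with $p_1(x)=1-x$ and $p_2(x)=\Phi_a(x)$, to compute the residues $\res{\frac{1}{\Phi_a(x)}}{1-x}$ and $\res{\frac{1}{1-x}}{\Phi_a(x)}$. The first is immediate: $\Phi_a(x)\equiv\Phi_a(1)\pmod{1-x}$ and $\Phi_a(1)\neq0$ since $a>1$, so this residue is the constant $\frac{1}{\Phi_a(1)}=\frac{1}{\Phi_a(1)}h^{(a)}_0(1)$. For the second I would apply the Cauchy--Euler operator $xD$ to $(1-x)\Phi_a(x)\tilde{\Phi}_a(x)=1-x^a$ and reduce modulo $\Phi_a(x)$: the two summands still carrying the factor $\Phi_a(x)$ drop out, leaving $x(1-x)\Phi'_a(x)\tilde{\Phi}_a(x)\equiv-ax^a\pmod{\Phi_a(x)}$; since $\Phi_a(x)\mid x^a-1$ the right-hand side is $\equiv-a$, so $-\frac{1}{a}x\tilde{\Phi}_a(x)\Phi'_a(x)$ is the inverse of the unit $1-x$ modulo $\Phi_a(x)$, i.e.\ $\res{\frac{1}{1-x}}{\Phi_a(x)}=\rem{\bigl(-\frac{1}{a}x\tilde{\Phi}_a(x)\Phi'_a(x)\bigr)}{\Phi_a(x)}=h^{(a)}_1(x)$. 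Substituting these two residues into Theorem~\ref{cover_up} yields (\ref{pf_cylco1}) for $k=1$. (Equivalently, one may absorb the $a(1-x^a)$-term into the $\Phi_a(x)$-coefficient and apply the Fundamental Lemma~\ref{lem_bez2} verbatim as for Lemma~\ref{pow_k}.)

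For the inductive step, assuming (\ref{pf_cylco1}) for $k$, I would multiply both sides by $\frac{1}{1-x}$ and then perform the partial fraction of the single remaining mixed term $\dfrac{h^{(a)}_k(x)}{(1-x)\Phi_a(x)}$ exactly as in the base case but now with numerator $h^{(a)}_k(x)$. By Lemma~\ref{lem_res}(2) its $\Phi_a(x)$-component equals $\rem{h^{(a)}_k(x)h^{(a)}_1(x)}{\Phi_a(x)}=\rem{\bigl(-\frac{1}{a}x\tilde{\Phi}_a(x)\Phi'_a(x)\bigr)^{k+1}}{\Phi_a(x)}=h^{(a)}_{k+1}(x)$, while its $(1-x)$-component is the constant $\frac{h^{(a)}_k(1)}{\Phi_a(1)}$, which is precisely the $j=k$ term missing from the sum; re-indexing then produces (\ref{pf_cylco1}) for $k+1$.

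The argument is entirely routine; the only genuinely new input relative to Lemma~\ref{pow_k} is the bookkeeping with $\tilde{\Phi}_a(x)$ — one uses $\Phi_a(x)\mid x^a-1$ when evaluating $\res{\frac{1}{1-x}}{\Phi_a(x)}$ (to undo the stray power of $x$ produced by differentiation), and the relation $\Phi_a(1)\tilde{\Phi}_a(1)=a$, obtained by differentiating $\Phi_1\Phi_a\tilde{\Phi}_a=1-x^a$ at $x=1$, is what makes the leading coefficient come out as $\frac{1}{\Phi_a(1)}$. As a sanity check, for $a=p$ prime one has $\tilde{\Phi}_p(x)=1$, $\Phi_p(x)=\Psi_p(x)$ and $\Phi_p(1)=p$, and (\ref{pf_cylco1}) collapses to (\ref{pf_k}) with $m=p$.
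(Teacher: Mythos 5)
Your proof is correct and follows exactly the route the paper intends: the paper itself only remarks that the lemma ``is again by induction on $k$ and is similar to that of the proof of Lemma \ref{pow_k}'', and your argument is a faithful fleshing-out of that induction, with the key computation $(1-x)\cdot\bigl(-\tfrac{1}{a}x\tilde{\Phi}_{a}(x)\Phi'_{a}(x)\bigr)\equiv 1 \pmod{\Phi_{a}(x)}$ correctly replacing the $\Psi_{m}$-identity used there. Nothing further is needed.
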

The proof of this lemma is again by induction on $k$ and is similar to that of the proof of Lemma \ref{pow_k}. As a consequence of this lemma we have     
\begin{equation}\label{res_2x}
\res{\frac{1}{\Phi_{a}(x)}}{(1-x)^{k}}=\frac{1}{\Phi_{a}(1)}\sum_{j=0}^{k-1}h^{(a)}_{j}(1)(1-x)^{j}.
\end{equation}

\begin{lem}\label{gcd}
Let $\textnormal{gcd}(m,n)=1$ and let $a,b>0$ satisfying $am-bn=1$. Then, 
\begin{equation}\label{n_m}
\res{\frac{1}{\Psi_{m}(x)}}{\Psi_{n}(x)} = \rem{\sum_{j=0}^{a-1}x^{jm\%n}}{\Psi_{n}(x)}
\end{equation}
and
\begin{equation}\label{m_n}
\res{\frac{1}{\Psi_{n}(x)}}{\Psi_{m}(x)} = \rem{-\sum_{j=0}^{b-1}x^{(jn+1)\%m}}{\Psi_{m}(x)}.
\end{equation}
\end{lem}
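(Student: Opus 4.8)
The plan is to exhibit, in each case, an explicit polynomial that is the reciprocal of the relevant $\Psi$ modulo the other one, and then read off the answer from Definition \ref{eval} together with Lemma \ref{cor_phi}. Before anything else I would note that $\gcd(m,n)=1$ forces $\gcd(\Psi_{m}(x),\Psi_{n}(x))=1$ (they share no common cyclotomic factor $\Phi_{d}$ with $d\neq 1$), so both $eval$ expressions are defined, and that $b>0$ is possible because $\gcd(m,n)=1$. The two elementary facts I will use repeatedly are the polynomial identity $\Psi_{N}(x)=1+x+\cdots+x^{N-1}=(1-x^{N})/(1-x)$ and the congruence $x^{n}\equiv 1\pmod{\Psi_{n}(x)}$, which follows from $1-x^{n}=(1-x)\Psi_{n}(x)$ (hence $\Psi_{n}(x)\mid x^{\ell n}-1$ for all $\ell\ge 0$).

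For the first identity I would compute the product of $\sum_{j=0}^{a-1}x^{jm}$ with $\Psi_{m}(x)$. Since $\sum_{j=0}^{a-1}x^{jm}=(1-x^{am})/(1-x^{m})$, multiplying by $\Psi_{m}(x)=(1-x^{m})/(1-x)$ gives the honest polynomial $\Psi_{am}(x)=\sum_{i=0}^{am-1}x^{i}$. Now I invoke $am=bn+1$: the exponents $0,1,\dots,bn$ split into the $b$ consecutive blocks $\{\ell n,\ell n+1,\dots,\ell n+n-1\}$ for $0\le \ell\le b-1$ together with the single leftover exponent $bn$. The $\ell$-th block contributes $x^{\ell n}\Psi_{n}(x)$, a multiple of $\Psi_{n}(x)$, and $x^{bn}\equiv 1\pmod{\Psi_{n}(x)}$; hence $\Psi_{am}(x)\equiv 1\pmod{\Psi_{n}(x)}$. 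So $\sum_{j=0}^{a-1}x^{jm}$ is a reciprocal of $\Psi_{m}(x)$ modulo $\Psi_{n}(x)$, and Definition \ref{eval} gives $\res{1/\Psi_{m}(x)}{\Psi_{n}(x)}=\rem{(\sum_{j=0}^{a-1}x^{jm})}{\Psi_{n}(x)}$; replacing each $x^{jm}$ by $x^{jm\%n}$ via Lemma \ref{cor_phi} yields (\ref{n_m}).

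The second identity comes out the same way with the roles of $m$ and $n$ interchanged. Writing $-\sum_{j=0}^{b-1}x^{jn+1}=-x(1-x^{bn})/(1-x^{n})$ and multiplying by $\Psi_{n}(x)=(1-x^{n})/(1-x)$ gives $-x\,\Psi_{bn}(x)=-(x+x^{2}+\cdots+x^{bn})$, which equals $1-\Psi_{am}(x)$ since $bn=am-1$. Reducing modulo $\Psi_{m}(x)$, the exponents $0,1,\dots,am-1$ now split cleanly into $a$ blocks of length $m$, so $\Psi_{am}(x)=(\sum_{\ell=0}^{a-1}x^{\ell m})\Psi_{m}(x)\equiv 0\pmod{\Psi_{m}(x)}$, whence $-\sum_{j=0}^{b-1}x^{jn+1}$ is a reciprocal of $\Psi_{n}(x)$ modulo $\Psi_{m}(x)$; Definition \ref{eval} and Lemma \ref{cor_phi} then deliver (\ref{m_n}).

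There is no deep obstacle here — the argument is essentially bookkeeping around geometric sums. The two spots where I would be careful are (i) clearing the denominators in the manipulations with $(1-x^{N})/(1-x)$ \emph{before} reducing anything modulo $\Psi_{m}(x)$ or $\Psi_{n}(x)$, so that the identities $(\sum_{j=0}^{a-1}x^{jm})\Psi_{m}(x)=\Psi_{am}(x)$ and $-x\,\Psi_{bn}(x)=1-\Psi_{am}(x)$ are used as genuine polynomial identities; and (ii) tracking the asymmetry between $a$ and $b$ — there is a leftover term $x^{bn}$ in the first reduction but a missing term $x^{am}$ in the second — together with the resulting minus sign in (\ref{m_n}).
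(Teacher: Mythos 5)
Your proof is correct. It rests on the same central idea as the paper's — multiply by the geometric sum $\sum_{j=0}^{a-1}x^{jm}$ so that $am=bn+1$ can be exploited — but the verification is organized differently. The paper stays inside the $eval$ calculus: it rewrites $1/\Psi_{m}(x)$ as $(1-x)/(1-x^{m})$, multiplies numerator and denominator by $x^{(a-1)m}+\cdots+1$, and then uses Lemma \ref{lem_res}(3) together with the reduction $x^{n}\mapsto 1$ to replace the denominator $1-x^{am}$ by $1-x$ (via the slightly informal step $1-x^{am}=1-x^{am}x^{-bn}=1-x$ modulo $1-x^{n}$) before cancelling. You instead exhibit the Bezout cofactor explicitly and check the genuine polynomial identity $\bigl(\sum_{j=0}^{a-1}x^{jm}\bigr)\Psi_{m}(x)=\Psi_{am}(x)$, then reduce $\Psi_{am}(x)$ modulo $\Psi_{n}(x)$ by the block decomposition of the exponents $0,\dots,bn$. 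This buys a fully self-contained verification that appeals only to Definition \ref{eval} and Lemma \ref{cor_phi}, and it avoids the negative-exponent manipulation; the paper's route is shorter on the page but leans on the quotient-ring properties of $eval$. Your treatment of the second identity, including the sign and the identity $-x\,\Psi_{bn}(x)=1-\Psi_{am}(x)$, is likewise correct, and is in fact more explicit than the paper, which only remarks that (\ref{m_n}) ``follows in similar lines.''
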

\begin{proof}
Observe that we can write 
$$
\res{\frac{1}{\Psi_{m}(x)}}{\Psi_{n}(x)} = \res{\frac{1-x}{1-x^{m}}}{\Psi_{n}(x)}.
$$
Upon multiplying and dividing by $(x^{(a-1)m}+\cdots+1)$ we have 
\begin{equation}\label{eq_am}
\res{\frac{1-x}{1-x^{m}}}{\Psi_{n}(x)}=\res{\frac{(1-x)(x^{(a-1)m}+\cdots+1)}{1-x^{am}}}{\Psi_{n}(x)}.
\end{equation}
By Lemma \ref{lem_res}(3), we can replace any one of the equivalent polynomials modulo $\Psi_{n}(x)$ at both numerator and the denominator. Moreover, by the Lemma \ref{cor_phi} when we perform modulo $\Psi_{n}(x)$ we can first work modulo $(1-x^{n})$ and replacing $x^{n}$ by $1$. As $am-bn=1$ we have 
$$
1-x^{am}=1-x^{am}x^{-bn}=1-x^{am-bn}=1-x  
$$
holds modulo $(1-x^{n})$.
Therefore, from (\ref{eq_am}) we can write 
\begin{eqnarray}
\nonumber \res{\frac{1}{\Psi_{m}(x)}}{\Psi_{n}(x)} &=& \res{\frac{(1-x)(x^{(a-1)m}+\cdots+1)}{1-x}}{\Psi_{n}(x)}\\
\nonumber &=& \res{x^{(a-1)m}+\cdots+1}{\Psi_{n}(x)}\\
\nonumber &=& \rem{\sum_{j=0}^{a-1}x^{jm\%n}}{\Psi_{n}(x)}.
\end{eqnarray}
We can derive (\ref{m_n}) in similar lines. 
\end{proof}

%For the case of two factors $\Psi_{m}(x)\Psi_{n}(x)$ the above result can be made more precise result. 
%
%\begin{thm}
%Let $m,n$ be two integers that are coprime and, let $m\geq 2$ and $n\geq3$. Consider the smallest possible positive numbers $a,b,\alpha,\beta$ for which we have $am-bn=1$ and $\alpha n- \beta m=1$. If $a>\beta$ then the partial fraction is given by 
%$$
%\frac{1}{\Psi_{m}(x)\Psi_{n}(x)}=\frac{\sum_{j=0}^{\alpha-1}x^{jn\%m}}{\Psi_{m}(x)}-\frac{\sum_{j=0}^{\beta-1}x^{(jm+1)\%n}}{\Psi_{n}(x)}
%$$
%else if $a<\beta$ we have the partial fraction 
%$$
%\frac{1}{\Psi_{m}(x)\Psi_{n}(x)}=\frac{\sum_{j=0}^{a-1}x^{jm\%n}}{\Psi_{n}(x)}-\frac{ \sum_{j=0}^{b-1}x^{(jn+1)\%m}}{\Psi_{m}(x)}.
%$$
%\end{thm}
%\begin{proof}
%It can be easily shown that $\beta=a$ holds only if $n\leq 2$ (the case we excluded). By the previous lemma one can easily check the following:
%$$
%\res{\frac{1}{\Psi_{m}(x)}}{\Psi_{n}(x)} = 
%\begin{cases} 
% \sum_{j=0}^{a-1}x^{jm\%n} & \textnormal{ if } a<\beta\\
% -\sum_{j=0}^{\beta-1}x^{(jm+1)\%n} & \textnormal{ if } a>\beta 
%\end{cases}
%$$
%and 
%$$
%\res{\frac{1}{\Psi_{n}(x)}}{\Psi_{m}(x)} = 
%\begin{cases} 
% -\sum_{j=0}^{b-1}x^{(jn+1)\%m} & \textnormal{ if } a<\beta \\
% \sum_{j=0}^{\alpha-1}x^{jn\%m} & \textnormal{ if } a>\beta 
%\end{cases}
%$$
%The above cases can be derived by observing from Corollary \ref{cor_psi} that to obtain the remainder by $\Psi_{n}(x)$ we need to replace $x^{n-1}$ by $(-1-\cdots-x^{n-2})$. Moreover, if $a<\beta$ then the term $x^{n-1}$ doesn't appear in $\sum_{j=0}^{a-1}x^{jm\%n}$.
%\end{proof}

\begin{exmp}
Let $m=263$ and $n=127$. We have $113\times 263-234\times 127=1$ and $29\times 127-14\times 263=1$. Thus, $a=113,b=234$ and $\alpha=29,\beta=14$. As $a>\beta$ the value $(n-1)=126$ doesn't occur in the exponent of the first term so the partial fraction by Lemma (\ref{gcd}) and the rule (\ref{rem_psi}) we have  
$$
\frac{1}{\Psi_{263}(x)\Psi_{127}(x)}=\frac{\sum_{j=0}^{28}x^{(127j)\%263}}{\Psi_{263}(x)}-\frac{\sum_{j=0}^{13}x^{(263j+1)\%127}}{\Psi_{127}(x)}.
$$
\end{exmp}

\subsection{$q$-Partial Fractions}\label{subsec_qpf}
Now, we consider a decomposition to the rational function 
$$
\frac{p(x)}{(1-x)^{m}(1-x^{n_{1}})\cdots(1-x^{n_{k}})},
$$ 
where $n_{1},\cdots, n_{k}$ are mutually coprimes, set $s=n_{1}+\cdots+n_{k}$ and $\textnormal{deg }(p(x))<m+s$ and $p(x)$ doesn't share a root with $(1-x^{n_{j}})$ for $1\leq j\leq k$. Then, we seek to find the $q$-partial fraction 
 $$
 \frac{p(x)}{(1-x)^{m+k}\Psi_{n_{1}}(x)\cdots \Psi_{n_{k}}(x)}=\frac{h_{0}(x)}{(1-x)^{m+k}} + \sum^{k}_{j=1}\frac{h_{j}(x)}{1-x^{n_{j}}}.
 $$
Suppose $p(x)=\tilde{p}(1-x)=\sum_{i=0}^{m+s-1}a_{i}(1-x)^{i}$ so that 
 $$
 h_{0}(x)=\left(\tilde{p}(1-x)\prod_{j=1}^{k}eval\left(\frac{1}{\Psi_{n_j}(x)}; (1-x)^{m+k-1}\right)\right) \textnormal{ rem }(1-x)^{m+k-1}
 $$
 can be expressed by using Lemma \ref{cor_xk} as  
 $$
 h_{0}(x)= \frac{1}{n_{1}\cdots n_{k}}\sum_{\substack{0\leq i+\alpha < m+k-1\\ \alpha=i_{1}+\cdots+i_{k}}} a_{i}f^{(1)}_{i_{1}}(1)\cdots f^{(k)}_{i_{k}}(1)(1-x)^{i+\alpha}.
 $$
 and further we have
\begin{eqnarray}
\nonumber h_{j}(x) &=&\res{\frac{1}{(1-x)^{m+k-1}\Psi_{n_{1}}(x)\cdots\widehat{\Psi_{n_{j}}(x)}\cdots\Psi_{n_{k}}(x)}}{\Psi_{n_{j}}(x)}\\
\nonumber &=& \left(\res{\frac{1}{(1-x)^{m+k-1}}}{\Psi_{n_{j}}(x)}\prod_{\substack{i=1\\ i\neq j}}^{k}\res{\frac{1}{\Psi_{n_{i}}(x)}}{\Psi_{n_{j}}(x)}\right)\textnormal{ rem }\Psi_{n_{j}}(x)\\
 \nonumber &=& \rem{\left(\left(-\frac{1}{n_{j}}x\Psi'_{n_{j}}(x)\right)^{m+k-1}\prod_{s=1,s\neq j}^{k}\sum_{i_{s}=0}^{a_{s}^{(j)}-1}x^{(i_{s}n_{s})\%n_{j}}\right)}{\Psi_{n_{j}}(x)}\label{separable_gj}.
\end{eqnarray}

\begin{rema}\label{avg_case}
In the computation above we considered only $m+k-1$ factors of $1/(1-x)$ while we actually have $m+k$ factors. The left out factor can be compensated by finally multiplying both sides of the partial fraction by $1/(1-x)$. A clear advantage in doing this is a reduction of one term for calculation. However, if necessary, we can also compute the formula with all $m+k$ factors where the change will be merely on the upper bound of the above formula from $m+k-1$ to $m+k$.
\end{rema}

\begin{thm}\label{main_qpf}
The following $q$-partial fraction holds
\begin{equation}
 \frac{p(x)}{(1-x)^{m}}\prod_{j=1}^{k}\frac{1}{1-x^{n_{j}}}=\sum^{m+k-2}_{j=0}\frac{c_{j}}{(1-x)^{m+k-j}} + \sum^{k}_{j=1}\frac{h_{j}(x)}{1-x^{n_{j}}},
\end{equation}
where $p(x)=\sum^{m+k-1}_{i=0}a_{i}(1-x)^{i}$,
$$
c_{j}=\frac{1}{n_{1}\cdots n_{k}}\sum_{i+i_{1}+\cdots+i_{k}=j} a_{i}f^{(1)}_{i_{1}}(1)\cdots f^{(k)}_{i_{k}}(1),
$$
and
$$
h_{j}(x) =  \rem{\left(p(x)f_{m+k-1}^{(j)}(x)\sum_{i_{1}=0}^{a_{1}^{(j)}-1}\cdots\sum_{i_{k}=0}^{a_{k}^{(j)}-1}x^{(i_{1}n_{1}+\cdots+i_{k}n_{k})\%n_{j}}\right)}{\Psi_{n_{j}}(x)}
$$
where $f_{k}^{(m)}(x)=(-\frac{1}{m}x\Psi'_{m}(x))^{k}, a_{j}^{(j)}=1$ and $a_{i}^{(j)}n_{i}-b_{i}^{(j)}n_{j}=1, a^{(j)}_{i}, b^{(j)}_{i}>0$ for $i\neq j$. 
\end{thm}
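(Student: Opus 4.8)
The plan is to read the claimed identity as an instance of the Extended Cover-Up Method (Theorem~\ref{cover_up}) applied to a coprime factorization of the denominator, and then to evaluate each $eval$-term in closed form using the preparation Lemmas~\ref{pow_k} and~\ref{gcd} together with the multiplicativity of $eval$ (Lemma~\ref{lem_res}(2)). First I would write $1-x^{n_{j}}=(1-x)\Psi_{n_{j}}(x)$, so that
$$
\frac{p(x)}{(1-x)^{m}}\prod_{j=1}^{k}\frac{1}{1-x^{n_{j}}}=\frac{1}{1-x}\cdot\frac{p(x)}{(1-x)^{m+k-1}\,\Psi_{n_{1}}(x)\cdots\Psi_{n_{k}}(x)}.
$$
The factors $(1-x)^{m+k-1},\Psi_{n_{1}}(x),\dots,\Psi_{n_{k}}(x)$ are pairwise coprime: since $n_{1},\dots,n_{k}$ are mutually coprime, $\Psi_{n_{i}}(x)$ and $\Psi_{n_{j}}(x)$ share no cyclotomic factor for $i\neq j$, and $\Psi_{n_{j}}(1)=n_{j}\neq 0$ keeps $1-x$ out of every $\Psi_{n_{j}}(x)$. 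The degree hypothesis makes the right-hand fraction proper, so the Extended Cover-Up Method applies; in the general-numerator form, obtained by multiplying the identity of Theorem~\ref{cover_up} by $p(x)$ and then reducing each numerator modulo its own factor (legitimate by Lemmas~\ref{lem_rem} and~\ref{lem_res}), it yields
$$
\frac{p(x)}{(1-x)^{m+k-1}\prod_{j}\Psi_{n_{j}}(x)}=\frac{\tilde h_{0}(x)}{(1-x)^{m+k-1}}+\sum_{j=1}^{k}\frac{\tilde h_{j}(x)}{\Psi_{n_{j}}(x)},
$$
where $\tilde h_{0}(x)=\res{\frac{p(x)}{\prod_{j}\Psi_{n_{j}}(x)}}{(1-x)^{m+k-1}}$ and $\tilde h_{j}(x)=\res{\frac{p(x)}{(1-x)^{m+k-1}\prod_{i\neq j}\Psi_{n_{i}}(x)}}{\Psi_{n_{j}}(x)}$.

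Multiplying this through by $1/(1-x)$ turns $\tilde h_{j}(x)/\big((1-x)\Psi_{n_{j}}(x)\big)$ into $\tilde h_{j}(x)/(1-x^{n_{j}})$, and since $\tilde h_{0}(x)$, being a remainder modulo $(1-x)^{m+k-1}$, has degree $<m+k-1$, expanding $\tilde h_{0}(x)=\sum_{l=0}^{m+k-2}c_{l}(1-x)^{l}$ gives $\tilde h_{0}(x)/(1-x)^{m+k}=\sum_{l=0}^{m+k-2}c_{l}/(1-x)^{m+k-l}$; peeling off the single factor $1/(1-x)$ in this way is precisely the maneuver of Remark~\ref{avg_case}, and it is what prevents a $1/(1-x)$ term from surviving. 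To identify the $c_{l}$, expand $p(x)=\sum_{i}a_{i}(1-x)^{i}$ and combine, via Lemma~\ref{lem_res}(2), the factor $\res{p(x)}{(1-x)^{m+k-1}}$ (which truncates $p$ to degree $<m+k-1$ by Lemma~\ref{cor_xk} read in the variable $1-x$) with $\res{\frac{1}{\Psi_{n_{l}}(x)}}{(1-x)^{m+k-1}}=\frac{1}{n_{l}}\sum_{i_{l}=0}^{m+k-2}f^{(n_{l})}_{i_{l}}(1)(1-x)^{i_{l}}$ from~(\ref{res_1x}); truncating the resulting product modulo $(1-x)^{m+k-1}$ and gathering all terms with $i+i_{1}+\cdots+i_{k}=l$ produces exactly $c_{l}=\frac{1}{n_{1}\cdots n_{k}}\sum_{i+i_{1}+\cdots+i_{k}=l}a_{i}f^{(n_{1})}_{i_{1}}(1)\cdots f^{(n_{k})}_{i_{k}}(1)$, the coefficient in the statement (there $f^{(n_{l})}_{i_{l}}$ is abbreviated $f^{(l)}_{i_{l}}$).

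For the periodic pieces, Lemma~\ref{lem_res}(2) rewrites
$$
\tilde h_{j}(x)=\rem{\Big(p(x)\,\res{\frac{1}{(1-x)^{m+k-1}}}{\Psi_{n_{j}}(x)}\prod_{s\neq j}\res{\frac{1}{\Psi_{n_{s}}(x)}}{\Psi_{n_{j}}(x)}\Big)}{\Psi_{n_{j}}(x)};
$$
into this I substitute $\res{\frac{1}{(1-x)^{m+k-1}}}{\Psi_{n_{j}}(x)}=f^{(n_{j})}_{m+k-1}(x)$ from~(\ref{res_psi}) (the outer $\mathrm{rem}$ absorbs the difference between this $f$ and the one written in the statement) and $\res{\frac{1}{\Psi_{n_{s}}(x)}}{\Psi_{n_{j}}(x)}=\rem{\sum_{i_{s}=0}^{a^{(j)}_{s}-1}x^{(i_{s}n_{s})\%n_{j}}}{\Psi_{n_{j}}(x)}$ from Lemma~\ref{gcd}, where $a^{(j)}_{s}n_{s}-b^{(j)}_{s}n_{j}=1$ with $a^{(j)}_{s},b^{(j)}_{s}>0$ (such positive solutions exist since $\gcd(n_{s},n_{j})=1$). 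It then remains to merge the $k-1$ geometric-type sums into one: modulo $\Psi_{n_{j}}(x)$ one has $\Psi_{n_{j}}(x)\mid x^{n_{j}}-1$, hence $x^{n_{j}}\equiv 1$, so $\prod_{s\neq j}x^{(i_{s}n_{s})\%n_{j}}$ is congruent to $x^{(\sum_{s\neq j}i_{s}n_{s})\%n_{j}}$, the integer exponents summing correctly modulo $n_{j}$; introducing a dummy index $i_{j}$ with the convention $a^{(j)}_{j}:=1$ (which forces $i_{j}=0$) then turns the product of sums into $\sum_{i_{1}=0}^{a^{(j)}_{1}-1}\cdots\sum_{i_{k}=0}^{a^{(j)}_{k}-1}x^{(i_{1}n_{1}+\cdots+i_{k}n_{k})\%n_{j}}$, giving $\tilde h_{j}(x)=h_{j}(x)$. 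Finally $\deg h_{j}<\deg\Psi_{n_{j}}=n_{j}-1$, so $h_{j}(x)/(1-x^{n_{j}})$ is proper and the resulting expression is a genuine $q$-partial fraction, with the powers $(1-x)^{m+k-l}$, $0\le l\le m+k-2$, as claimed.

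I expect the main obstacle to lie not in any single estimate but in the bookkeeping: setting up cleanly the general-numerator version of the Extended Cover-Up Method from Lemmas~\ref{lem_rem} and~\ref{lem_res}; keeping the integer remainder ``$\%$'' strictly separate from the polynomial remainder ``$\mathrm{rem}$'' throughout; handling the lone factor $1/(1-x)$ as in Remark~\ref{avg_case} so that no $1/(1-x)$ term survives; and, the most delicate point, justifying the reduction of $\prod_{s\neq j}\sum_{i_{s}}x^{(i_{s}n_{s})\%n_{j}}$ to a single multi-index sum modulo $\Psi_{n_{j}}(x)$, which rests on the congruence $x^{n_{j}}\equiv 1$ modulo $\Psi_{n_{j}}(x)$ and on additivity of residues mod $n_{j}$.
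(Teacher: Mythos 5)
Your proposal is correct and follows essentially the same route as the paper: rewrite each $1-x^{n_j}$ as $(1-x)\Psi_{n_j}(x)$, apply the Extended Cover-Up Method (Theorem~\ref{cover_up} with the numerator handled via Lemmas~\ref{lem_rem} and~\ref{lem_res}) to the coprime factors $(1-x)^{m+k-1},\Psi_{n_1},\dots,\Psi_{n_k}$, evaluate the $eval$-terms with (\ref{res_1x}), (\ref{res_psi}) and Lemma~\ref{gcd}, and reinstate the leftover $1/(1-x)$ as in Remark~\ref{avg_case}. Your explicit justification of merging the $k-1$ geometric sums into one multi-index sum via $x^{n_j}\equiv 1 \bmod \Psi_{n_j}(x)$, and your care in carrying $p(x)$ through the periodic terms, are in fact slightly more detailed than the paper's own derivation.
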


\section{Denumerants, Frobenius Number and Ehrhart Polynomials}\label{qPFD}
Let $n_{1},\cdots,n_{k}$ be a set of $k$ distinct positive integers satisfying 
$$
2\leq n_{1}<n_{2}<\cdots<n_{k}.
$$
Furthermore, we assume them to be mutually coprime, that is, $\textnormal{gcd}(n_{i},n_{j})=1$ for $i\neq j$. We denote the denumerants by $d(n_1,\cdots,n_k;t)$, the number of non-negative integer solutions $\alpha_{1},\cdots,\alpha_{k}$ of the equation
\begin{equation}\label{linear_equation}
\alpha_{1}n_{1}+\cdots+\alpha_{k}n_{k}=t.
\end{equation}
The Frobenius number is the largest number $t$ that cannot be expressed as (\ref{linear_equation}). Computing the Frobenius number is known to be NP-Hard under Turing reduction \cite{Alfonsin}. 

In order to compute the denumerants, we set $p(x)=1$ and $m=0$ in Theorem \ref{main_qpf} to have  
$$
  \prod_{j=1}^{k}\frac{1}{1-x^{n_{j}}}=\sum^{k-2}_{j=0}\frac{c_{j}}{(1-x)^{k-j}} + \sum^{k}_{j=1}\frac{h_{j}(x)}{n_{j}(1-x^{n_{j}})}.
$$
The coefficient  
\begin{equation}\label{coeff_standard}
c_{j}=\frac{1}{n_{1}\cdots n_{k}}\sum_{i_{1}+\cdots+i_{k}=j} f^{(1)}_{i_{1}}(1)\cdots f^{(k)}_{i_{k}}(1), \quad \textnormal{ for }0\leq j\leq k-2,
\end{equation}
and denote $h_{j}(x)=\sum^{n_{j}-1}_{i=0}c_{i}^{(j)}x^{i}$. By using the power series 
$$
\frac{1}{(1-x)^{l}}=\sum^{\infty}_{n=0}\begin{pmatrix}
n+l-1\\
n\\
\end{pmatrix}
x^{n},
$$
for $l=0,\cdots, (k-1)$, we obtain 
\begin{equation}\label{denum}
d(n_{1},\cdots,n_{k};t)=\sum^{k-2}_{j=0}c_{j}
\begin{pmatrix}
t+k-j-1\\
t\\
\end{pmatrix}
+\sum^{k}_{j=1}\frac{1}{n_{j}}c^{(j)}_{t\%n_{j}}.
\end{equation}
For example, a $q$-partial fraction of the generating function of partition by $3$ and $5$ is
$$
\frac{1}{(1-x^{3})(1-x^{5})}=\frac{1}{15(1-x)^{2}}+\frac{1-x}{3(1-x^{3})}+\frac{2 x^3-x^2+x+3}{5 \left(1-x^{5}\right)}.
$$
Thus by (\ref{denum}) 
$$
d(3,5;t) = 
\frac{1}{15}
\begin{pmatrix}
t+1\\
t\\
\end{pmatrix}+\frac{1}{3}c^{(1)}_{t\%3}+\frac{1}{5}c^{(2)}_{t\%5}.
$$
Assuming the multiplications of the polynomials is done using the FFT algorithm, the computational cost of setting up (\ref{denum}) is $O(n_{k}\log n_{k})$. Using the upper bound for the Frobenius number $\left(2n_{k-1}\lfloor\frac{n_{k}}{k}\rfloor-n_{k}\right)$ as given by Erd\H{o}s and Graham \cite{Erdos, Alfonsin}, the time taken for a linear search is $O(n_{k-1}n_{k})$ steps. Overall, to compute the Frobenius number one requires at the most $O(kn_{k}\log n_{k}+n_{k-1}n_{k})$ steps, which is polynomially bound by the numeric values of the input. Hence, we obtain a pseudo-polynomial time algorithm. 

\begin{exmp}
We want to determine the Frobenius number of the set of coprimes $\{9,17,31\}$. So, we consider the $q$-partial fraction of 
$$
F(x)=\frac{1}{(1-x^{9})(1-x^{17})(1-x^{31})}.
$$
Here $k=3$ and $n_{1}=9, n_{2}=17$ and $n_{3}=31$. We first compute $f_{i}^{(j)}(1)$ and obtain
$$
f^{(1)}_{0}(1)f^{(2)}_{0}(1)f^{(3)}_{0}(1)=1,\textnormal{ and } f^{(1)}_{1}(1)+f^{(2)}_{1}(1)+f^{(3)}_{1}(1) = 27.
$$ 
Furthermore, $a_{i}^{(j)}$ for each $j=1,2,3$ satisfy
$$
a_{j}^{(j)}=1 \textnormal{ and }a_{i}^{(j)}n_{i}-b_{i}^{(j)}n_{j}=1,\  a^{(j)}_{i}, b^{(j)}_{i}>0 \textnormal{ for }i\neq j
$$  
can be easily computed to obtain 
$$a_{1}^{(1)}= 1,\quad a_{2}^{(1)}=8,\quad a_{3}^{(1)}=7,\quad  a_{1}^{(2)}= 2,\quad  a_{2}^{(2)}=1, \quad  $$
$$a_{3}^{(2)}=11,\quad a_{1}^{(3)}=7,\quad a_{2}^{(3)}=11, \textnormal{ and } a_{3}^{(3)}=1.$$
Substituting for $f_{i}^{(j)}(1)$ and $a^{(j)}_{i}$ with simplifications yields:
\begin{small}
\begin{eqnarray}
\nonumber g_{0}(x)&=&\left(1+27(1-x)\right),\\
\nonumber g_{1}(x) &=&\left(-2-6x-3x^2-2x^3-3x^4-6x^5-2x^6\right),\\
\nonumber g_{2}(x)&=& \left(13+4x+7x^2+5x^3-2x^4+3x^5+3x^6-2x^7\right.\\
\nonumber & & \quad\quad\quad \left.+5x^8+7x^9+4x^{10}+13x^{11}-x^{13}+10x^{14}-x^{15}\right),\\
\nonumber g_{3}(x)&=&\left(14+13x-3x^{2}-3x^{3}+13x^{4}+14x^{5}+2x^{7}-11x^{8}\right.\\
\nonumber & &\quad\quad +23x^{9}+11x^{10}-16x^{11}+4x^{12}+9x^{13}-x^{14}+5x^{15}\\
\nonumber & & \quad\quad -4x^{16}+3x^{17}+26x^{18}+3x^{19}-4x^{20}+5x^{21}-x^{22}\\
\nonumber & & \quad\quad \left.+9x^{23}+4x^{24}-16x^{25}+11x^{26}+23x^{27}-11x^{28}+2x^{29}\right).
\end{eqnarray}
\end{small}
The partial fraction is 
$$
F(x)=\frac{1}{9\cdot17\cdot 31}\frac{g_{0}(x)}{(1-x)^{3}}+ \frac{1}{9}\frac{g_{1}(x)}{(1-x^{9})}+\frac{1}{17}\frac{g_{2}(x)}{(1-x^{17})}+\frac{1}{31}\frac{g_{3}(x)}{(1-x^{31})}.
$$
The denumerant is given by 
$$
d(9,17,31;t)=\frac{1}{4743}\left(\begin{pmatrix}
t+2\\
t\\
\end{pmatrix}+27\begin{pmatrix}
t+1\\
t\\
\end{pmatrix}\right)+\frac{1}{9}c^{(1)}_{t\%9}+\frac{1}{17}c^{(2)}_{t\%17}+\frac{1}{31}c^{(3)}_{t\%31}
$$
where $c^{(j)}_{t\%n_{j}}$ is the $(t\%n_{j})^{th}$ coefficient of $g_{j}(x)$.

By the upper bound of Erd\H{o}s and Graham we need to look for the Frobenius number only up to $309$. By a simple linear search we get the Frobenius number for the set $\{9,17,31\}$ as $73$.
\end{exmp}

The Ehrhart Polynomial corresponds to the number of non-negative integer solutions of the inequality 
$$
\alpha_{1}n_{1}+\cdots + \alpha_{k}n_{k}\leq t,
$$ 
corresponding to the polytope $\mathcal{P}$ formed by plane passing through the points $\{(\frac{1}{n_{1}},0,\cdots,0),\cdots,(0,\cdots,\frac{1}{n_{k}})\}$. We denote the number of integral points inside $\mathcal{P}$ by $Ehr_{\mathcal{P}}(t)$. The generating function for $Ehr_{\mathcal{P}}(t)$ is 
$$
 \frac{1}{(1-x)} \prod_{j=1}^{k}\frac{1}{1-x^{n_{j}}}=\sum^{k-2}_{j=0}\frac{c_{j}}{(1-x)^{k+1-j}} + \sum^{k}_{j=1}\frac{h_{j}(x)}{n_{j}(1-x^{n_{j}})}.
$$
This corresponds to the case $p(x)=1$ and $m=1$ in Theorem \ref{main_qpf}. Denoting $h_{j}(x)=\sum^{n_{j}-1}_{i=0}\hat{c}_{i}^{(j)}x^{i}$ we have

\begin{eqnarray}
\nonumber Ehr_{\mathcal{P}}(t)&=&\sum^{k-2}_{j=0}c_{j}
\begin{pmatrix}
t+k-j\\
t\\
\end{pmatrix}
+\sum^{k}_{j=1}\frac{1}{n_{j}}\hat{c}^{(j)}_{t\%n_{j}}\\
\nonumber &=& \sum_{j=2}^{k-j}\sum_{s=0}^{k-j-1}c_{j}\textnormal{stirl}(k-j-1,s)(1+t)^{s}+\sum^{k}_{j=1}\frac{1}{n_{j}}\hat{c}^{(j)}_{t\%n_{j}},
\end{eqnarray}
where $\textnormal{stirl(k,s)}$ is the Stirling number of first kind. By this formula one can notice that the Ehrhart polynomial is indeed a quasi-polynomial with the constant term being periodic as a function of $t$.

%and writing the term $\frac{g_{j}(x)}{(1-x)(1-x^{n_j})}$ as  
%  $$
% \frac{g_{j}(x)}{(1-x)(1-x^{n_j})} = \frac{g_{j}(x)-g_{j}(1)}{(1-x)(1-x^{n_j})}+\frac{g_{j}(1)}{(1-x)(1-x^{n_j})}
% $$
% Further, note that the term $g_{j}(x)-g_{j}(1)$ is divisible by $x-1$. Hence, to simplify the above expression we need to consider $q$-partial fractions of the second term. 
% 
% $\frac{g_{j}(x)-g_{j}(1)}{(1-x)(1-x^{n_j})}=\sum_{t=0}^{n_{j}-2}c_{t}^{(j)}(x^{t-1}+\cdots+1)=h_{j}(x).$
% and 
% $$
% \frac{g_{j}(1)}{(1-x)(1-x^{n_j})}=\frac{g_{j}(1)}{n_{j}(1-x)}+\frac{g_{j}(1)(-x\Phi'_{n_{j}}(x))rem\Phi_{n_{j}}(x)}{n_{j}(1-x^{n_{j}})}
% $$
% 
%So that we have
%$$
%\frac{1}{(1-x)(1-x^{n_{1}})\cdots(1-x^{n_{k}})}=\sum_{j=0}^{k-2}\frac{d_{j}}{(1-x)^{k-j+1}}+\sum_{j=1}^{k}\frac{h_{j}(x)}{1-x^{n_{j}}}+\sum_{j=1}^{k}\frac{g_{j}(1)}{n_{j}(1-x)}+\sum^{k}_{j=1}\frac{\tilde{h}_{j}(x)}{n_{j}(1-x^{n_{j}})}
%$$

\section{Reciprocity Theorems of the Generalized Fourier-Dedekind Sums}\label{sec_recip}
Consider the generating function  
\begin{equation}\label{Fx}
F(x)=\frac{p(x)}{(1-x)^{m}(1-x^{n_1})\cdots (1-x^{n_k})}=\sum^{\infty}_{n=0}a(n)x^{n}, 
\end{equation}
where $m\geq 0$, $n_{1},\cdots, n_{k}$ are mutually coprime and $\textnormal{deg }p(x)=d<s+m$ for $s=n_{1}+\cdots+n_{k}$. We consider a broader class of generating functions than Gessel \cite{Gessel} and Carlitz \cite{Carlitz} where they consider $p(x)=x^{r}$ and $p(x)=(1-x)$ respectively. Both the works only consider the case $k=2$.  

\begin{defn}\label{FDS}
For a positive integer $b$ that is coprime to $n_{j}$ for $1\leq j\leq k$, the generalized Fourier-Dedekind sum of $F(x)$ defined in (\ref{Fx}) for $k>0$ is 
\begin{equation}\label{gen_FDS}
S_{t}(n_{1},\cdots,n_{k}; b)=\frac{1}{b}\sum^{b-1}_{j=1}\frac{p(\xi)\xi^{jt}}{(1-\xi^{j})^{m}(1-\xi^{jn_{1}})\cdots(1-\xi^{jn_{k}})}
\end{equation}
and for $k=0$, we define, 
\begin{equation}\label{trivial_gen_FDS}
S_{t}(b)=\frac{1}{b}\sum^{b-1}_{j=1}\frac{p(\xi)\xi^{jt}}{(1-\xi^{j})^{m}}.
\end{equation}
where $\xi=e^{2\pi i/b}$. Here $k$ refers to the dimension of the sum.
\end{defn} 

Clearly, if we set $p(x)=1$ and $m=0$ in the Definition \ref{FDS} we recover, for $k>0$, the Fourier-Dedekind sum \cite{Beck}
\begin{equation}\label{classical_FDS}
s_{t}(n_{1},\cdots,n_{k}; b)=\frac{1}{b}\sum^{b-1}_{i=1}\frac{\xi^{it}}{(1-\xi^{in_{1}})\cdots(1-\xi^{in_{k}})}.
\end{equation}

By reciprocity law we mean identities for certain sums of generalized Fourier-Dedekind sums. In the literature, there are essentially two types of reciprocity laws: Zagier type (for $n=0$ case) and Rademacher type (for some $n\neq 0$ case) \cite{Beck}. The Zagier type reciprocity statement is essentially a comparison of the constant term in the formal power series expansion. On the other hand, the Rademacher reciprocity law relies on expressing (\ref{Fx}) as 
$$
F(1/x)=(-1)^{\alpha}x^{\beta}F(x),
$$
for suitable $\alpha,\beta$ integers. To ensure such relation holds one needs to assume $p(x)$ in (\ref{Fx}) to be palindromic, i.e., $p(1/x)=(-1)^{\delta}x^{-d}p(x)$. Our scheme for developing Rademacher reciprocity results is as follows: 
 \begin{enumerate}
 \item Perform partial fractions of $F(x)$ using the extended cover up method. 
 \item Compute the finite Fourier series for the periodic terms and defining the formula for the generalized Fourier-Dedekind sum. 
 \item Using $F(1/x)=(-1)^{k+m}x^{m+s}F(x)$ and comparing the $n^{th}$ coefficient in the formal power series $F(1/x)=\sum^{\infty}_{n=0}a(-n)x^{-n}$ we have 
 $$
 a(-n)=(-1)^{k+m}a(n-m-s).  
$$ 
 \item The reciprocity result is derived by $a(-n)=0$ $\textnormal{ for } 1\leq n<m+s$.
 \end{enumerate}
Note that the above steps only provide us an analytic result; we do not discuss the combinatorial meaning of the coefficient $a(-n)$. After deriving the reciprocity theorems we also demonstrate in our context the similarity of reciprocity law with the greatest common divisor.

Suppose the formal power series $F(x)=\sum^{\infty}_{n=0}a(n)x^{n}$. By Theorem \ref{main_qpf} and the equation (\ref{FFS}) we have 
\begin{equation}\label{an_term}
a(n) =  \textnormal{poly}(n) +\quad \sum^{k}_{j=1} S_{-n}(n_1,\cdots,\widehat{n_{j}},\cdots,n_{k};\ n_{j}),
\end{equation}
where $\widehat{}$ as before means dropping the corresponding term and 
\begin{equation}\label{poly1}
\textnormal{poly}(n)= \sum_{j=0}^{m+k-1}c_{j}\begin{pmatrix}n+m+k-j \\ n\end{pmatrix}.
\end{equation}
Also, note that the terms corresponding to the average of coefficients is included in the polynomial part of the expression. This could be done as given in Remark \ref{avg_case}.

\begin{thm}\label{zagier_type} 
For all pairwise relatively prime positive integers $n_{1},\cdots, n_{k}$,
$$
 \sum^{k}_{j=1} S_{0}(n_1,\cdots,\widehat{n_{j}},\cdots,n_{k};n_{j}) =p(0)-\textnormal{poly}(0)
$$
holds, where $c_{j}$ and $h_{j}(x)$ are defined as in Theorem \ref{main_qpf}, Remark \ref{avg_case} and $\textnormal{poly}(n)$ as in (\ref{poly1}).
\end{thm}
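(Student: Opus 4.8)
The plan is to extract the constant term of the formal power series of $F(x)$ in two different ways and equate them; everything of substance has already been packaged into the identity (\ref{an_term}).

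\textbf{Step 1 (the coefficient formula).} I would start from the $q$-partial fraction of $F(x)$ given by Theorem \ref{main_qpf}, taken in the form adjusted by Remark \ref{avg_case} so that the zeroth Fourier coefficients $\textnormal{avg}(h_j)$ of the periodic parts are absorbed into the polynomial part. Expanding each polynomial-over-$(1-x)$ term via the binomial series $1/(1-x)^{\ell}=\sum_{n\ge0}\binom{n+\ell-1}{n}x^{n}$ contributes exactly $\textnormal{poly}(n)$ as in (\ref{poly1}) to the coefficient $a(n)$. Expanding each periodic term $h_j(x)/(1-x^{n_j})$ via the finite Fourier series (\ref{FFS})—which is legitimate because $\gcd(n_j,n_i)=1$ makes $\Psi_{n_j}$ coprime to the relevant denominators, and because Lemma \ref{eval_eval} lets us evaluate $h_j$ at the roots of unity $\xi^{\ell}$, $\xi=e^{2\pi i/n_j}$—contributes precisely $S_{-n}(n_1,\dots,\widehat{n_j},\dots,n_k;\,n_j)$ as in Definition \ref{FDS}. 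This is exactly the already-stated identity (\ref{an_term}): $a(n)=\textnormal{poly}(n)+\sum_{j=1}^{k}S_{-n}(n_1,\dots,\widehat{n_j},\dots,n_k;\,n_j)$ for all $n\ge0$.

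\textbf{Step 2 (the constant term directly).} Since $1/\bigl((1-x)^{m}\prod_{j}(1-x^{n_j})\bigr)=1+O(x)$, we get $F(x)=p(x)\bigl(1+O(x)\bigr)=p(0)+O(x)$, hence $a(0)=p(0)$.

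\textbf{Step 3 (conclude).} Putting $n=0$ in the formula of Step 1 and using $a(0)=p(0)$ from Step 2 yields $p(0)=\textnormal{poly}(0)+\sum_{j=1}^{k}S_{0}(n_1,\dots,\widehat{n_j},\dots,n_k;\,n_j)$; rearranging gives the asserted identity. The only point requiring genuine care—rather than a real obstacle—is the bookkeeping in Step 1: one must check that the finite Fourier series of each periodic part reproduces $S_0$ with the numerator $p(\xi^{\ell})$ exactly as in Definition \ref{FDS} (tracking the factor $p(x)$ through the $eval$ operator and Lemma \ref{eval_eval}), and that the $\textnormal{avg}(h_j)$ terms have been moved into $\textnormal{poly}$ consistently so that nothing is counted twice. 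Once (\ref{an_term}) is granted, the theorem is immediate.
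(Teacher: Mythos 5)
Your proposal is correct and follows essentially the same route as the paper: the paper's proof likewise observes that $a(0)=p(0)$ and substitutes $n=0$ into (\ref{an_term}), with that identity already established from Theorem \ref{main_qpf} and the finite Fourier series (\ref{FFS}) just before the theorem. Your Step 1 merely spells out the derivation of (\ref{an_term}) in more detail than the paper does at that point.
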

\begin{proof}
The constant term in the power series expansion of (\ref{Fx}) is $a(0)=p(0)$. Putting $n=0$ in the equation (\ref{an_term}) we have the result. 
\end{proof}

\begin{exmp}[Zagier]
Consider the set $A=\{n_1, \cdots, n_{k}\}$ of positive distinct integers that are mutually coprime and $k$ odd. Let
\begin{equation}\label{zagier_poly}
p(x)=(1+x^{n_{1}})\cdots(1+x^{n_{k}})+(1-x^{n_{1}})\cdots(1-x^{n_{k}}).
\end{equation}
 Up on a simplification we have 
 $$ 
 p(x)=2\sum_{S\subset A}x^{|S|} \textnormal{ where the subsets } S \textnormal{  of  } A \textnormal{ have an even cardinality.} 
 $$  
For $\xi=e^{2i\pi b}$, $p(\xi)=2(1+\xi^{n_{1}})\cdots\widehat{(1+\xi^{n_{j}})}\cdots(1+\xi^{n_{k}})$, and therefore the Zagier's higher dimensional Dedekind sum is
$$
Z(n_{1},\cdots,n_{k}; b)=\frac{2}{b}\sum^{b-1}_{i=1}\frac{(1+\xi^{in_{1}})\cdots\widehat{(1+\xi^{in_{j}})}\cdots(1+\xi^{in_{k}})}{(1-\xi^{in_{1}})\cdots\widehat{(1-\xi^{in_{j}})}\cdots(1-\xi^{in_{k}})}.
$$
Now, we can apply Theorem \ref{zagier_type} to obtain the reciprocity result with $p(x)$ given by (\ref{zagier_poly}) and $m=0$.
\end{exmp}

Now, we prove a Rademacher-like reciprocity theorem. Assume that $p(x)$ is palindromic, i.e., $p(1/x)=(-1)^{\delta}x^{-d}p(x),\  d = \textnormal{deg} p(x)$. To obtain a reciprocity result we consider 
$$
F(1/x)=(-1)^{\delta+k+1}x^{s+m-d}F(x).
$$
Then by comparing the $n^{th}$ term of the formal power series we have 
$$
a(-n)=\left\{\begin{array}{@{}l@{\thinspace}l}
       0 & \text{   if   } 1 \leq n \leq s+m-d-1 \\
      (-1)^{\delta+k+1}a(n-m-k-d)   & \text{   if   } s+m-d\leq n. \\
     \end{array}\right.
$$

\begin{thm}[Reciprocity Theorem]\label{k_dim_recip_thm}
For all pairwise relatively prime positive integers $n_{1},\cdots, n_{k}$, for $1\leq n \leq s+m-d-1$, the equation  
\begin{equation}
\nonumber \sum^{k}_{j=1} S_{n}(n_1,\cdots,\widehat{n_{j}},\cdots,n_{k};\ n_{j})=-\textnormal{poly}(-n)
\end{equation}
holds, where $c_{j}$ and $h_{j}(x)$ are defined as in Theorem \ref{main_qpf}, Remark \ref{avg_case} and $\textnormal{poly}(n)$ given in (\ref{poly1}).
\end{thm}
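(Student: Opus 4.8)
The plan is to read the identity off the coefficient formula (\ref{an_term}) together with the palindromy-driven vanishing recorded in the case analysis displayed just above the statement, so in truth almost all of the work has already been done. The one substantive ingredient is the closed form
\[
a(n)=\textnormal{poly}(n)+\sum_{j=1}^{k}S_{-n}(n_{1},\dots,\widehat{n_{j}},\dots,n_{k};n_{j}),\qquad n\in\mathbb{Z},
\]
which is (\ref{an_term}); it comes from feeding the $q$-partial fraction of Theorem \ref{main_qpf} into $F(x)$, expanding the polynomial parts $c_{j}/(1-x)^{m+k-j}$ by the binomial series and each periodic part $h_{j}(x)/(1-x^{n_{j}})$ by its finite Fourier series (\ref{FFS}), where Lemma \ref{eval_eval} identifies the Fourier data of $h_{j}$ at the $n_{j}$th roots of unity with $p(\xi^{\ell})/\bigl((1-\xi^{\ell})^{m}\prod_{i\neq j}(1-\xi^{\ell n_{i}})\bigr)$, the constant ($\textnormal{avg}$) contributions being folded into $\textnormal{poly}$ as in Remark \ref{avg_case}. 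The first thing I would emphasize is that the right-hand side of (\ref{an_term}) is a genuine $\mathbb{Z}$-valued function of $n$: $\textnormal{poly}$ is polynomial in $n$ and each $S_{-n}(\cdots;n_{j})$ depends on $n$ only modulo $n_{j}$. Hence it extends the true coefficients $a(n)$, $n\geq 0$, to all integers, and that extension is exactly the quantity denoted $a(-n)$ in the display preceding the theorem.

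Next I would bring in the palindromy hypothesis $p(1/x)=(-1)^{\delta}x^{-d}p(x)$, which makes $F(1/x)=(-1)^{\delta+k+1}x^{s+m-d}F(x)$ an identity of rational functions. Since $F$ is a proper rational function, its Laurent expansion at $x=\infty$ has no constant or positive-power terms; at the level of partial fractions one checks that a summand $C/(1-x)^{\ell}$ contributes $-C\binom{n+\ell-1}{\ell-1}$ and a summand $h(x)/(1-x^{b})$ with $\deg h<b$ contributes $-[x^{\,n\%b}]h$ to the $x^{n}$-coefficient at $\infty$ (for $n\leq -1$), so that expansion is $-\sum_{n\geq 1}a(-n)x^{-n}$ with $a(-n)$ read off exactly from (\ref{an_term}). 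On the other hand $F(1/x)=\sum_{n\geq 0}a(n)x^{-n}$ near $\infty$ has no positive powers of $x$; equating this with $(-1)^{\delta+k+1}x^{s+m-d}\bigl(-\sum_{n\geq 1}a(-n)x^{-n}\bigr)$ and reading off the coefficients of $x^{1},\dots,x^{s+m-d-1}$ forces $a(-n)=0$ for $1\leq n\leq s+m-d-1$, while the remaining coefficients reproduce the second line of the displayed case analysis. This is precisely the relation recorded just before the theorem, so from here it may simply be quoted.

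The last step is immediate: substitute $n\mapsto -n$ in (\ref{an_term}). Since $\xi^{j(-(-n))}=\xi^{jn}$ we have $S_{-(-n)}(\cdots;n_{j})=S_{n}(\cdots;n_{j})$, whence
\[
a(-n)=\textnormal{poly}(-n)+\sum_{j=1}^{k}S_{n}(n_{1},\dots,\widehat{n_{j}},\dots,n_{k};n_{j});
\]
for $1\leq n\leq s+m-d-1$ the left-hand side vanishes by the previous step, and rearranging gives $\sum_{j=1}^{k}S_{n}(n_{1},\dots,\widehat{n_{j}},\dots,n_{k};n_{j})=-\textnormal{poly}(-n)$, which is the assertion.

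The proof proper is thus only a few lines; the only delicate point — and the place I would slow down — is the middle step, namely making rigorous the evaluation of the coefficient formula at negative arguments and aligning the expansions of $F$ at $0$ and at $\infty$ so that the functional equation genuinely forces $a(-n)=0$ across the whole range $1\leq n\leq s+m-d-1$ (note this range is typically much longer than the one the polynomial parts alone would yield, so the palindromy hypothesis is doing real work here, not merely cosmetic). All the heavy computation — the explicit shapes of $c_{j}$, $h_{j}(x)$, and $\textnormal{poly}(n)$ — is already packaged inside Theorem \ref{main_qpf}, so nothing new has to be computed.
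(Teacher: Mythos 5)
Your proposal is correct and follows essentially the same route as the paper: substitute $n\mapsto -n$ into the coefficient formula (\ref{an_term}) and invoke the vanishing $a(-n)=0$ for $1\leq n\leq s+m-d-1$ coming from the palindromy functional equation $F(1/x)=(-1)^{\delta+k+1}x^{s+m-d}F(x)$. The only difference is presentational — you justify the negative-index evaluation via the Laurent expansion at infinity, while the paper simply quotes the case analysis and uses the binomial identity (\ref{comb_n}) to rewrite $\textnormal{poly}(-n)$.
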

\begin{proof}
Replacing $n$ by $-n$ in (\ref{an_term}) and observing that   
$$
a(-n)=0,\quad \textnormal{     if     }\quad 1\leq n\leq s+m-d-1.
$$
Further, we using the identity 
\begin{equation}\label{comb_n}
(-1)^{l}\begin{pmatrix}
-n+\lambda\\
l\\
\end{pmatrix}=\begin{pmatrix}
n+l-1-\lambda\\
l\\
\end{pmatrix}, 
\end{equation}
for $n,\lambda\in \mathbb{Z} \textnormal{ and } l\in \mathbb{Z}^{+},$ we obtain the required result. 
\end{proof}
Similar to the Fourier-Dedekind sum, the generalized Fourier-Dedekind sum also enjoys further interesting properties.
\begin{enumerate}
\item $S_{t}(n_{1},\cdots,n_{k};b )$ is a rational number and is symmetric in $n_{1},\cdots, n_{k}$. 
\item $S_{t}(n_{1},\cdots,n_{k};b )$ only depends on $n_{i}\mod b$. 
\item $S_{t}(\lambda n_{1},\cdots,\lambda n_{k};b )=S_{t}(n_{1},\cdots,n_{k};b )$ if $\lambda$ is an integer prime to $b$.
\item $S_{t}(n_{1},\cdots,n_{k};b )=s_{t}(n_{1},\cdots,n_{k};b )*_{t}S_{t}(b)$. 
\end{enumerate}
We leave the proof of above properties to the reader. Property (4) follows from the Convolution Theorem for Finite Fourier Series  \cite[Theorem 7.10]{Beck}. 

In \cite{Beck}, the authors discuss the relation between the reciprocity law and the greatest common divisor. This relation becomes quite evident in our work when one writes the reciprocity theorems directly in terms of the coefficients as giving in (\ref{denum}). For the case $k=2$, i.e., $
\frac{1}{(1-x^{n_{1}})(1-x^{n_{2}})}$ with $n_{1}$ and $n_{2}$ relatively prime, the reciprocity theorem is 
$$
\frac{c^{(n_{1})}_{-n\%n_{1}}}{n_{1}}+\frac{c^{(n_{2})}_{-n\%n_{2}}}{n_{2}}+\frac{1}{n_{1}n_{2}}=0.
$$
Regrouping terms we have the Bezout's identity (a gcd relation)  
\begin{equation}\label{gcd_recip}
(-c^{(n_{1})}_{-n\%n_{1}})\ n_{2}+(-c^{(n_{2})}_{-n\%n_{2}})\ n_{1}=1.
\end{equation}
The Sylvester's reciprocity result for $k=2$ case \cite[Lemma 1.7]{Beck} 
$$
d(n_{1},n_{2};n)+d(n_{1},n_{2};n_{1}n_{2}-n)=1,
$$
for $1\leq n< n_{1}n_{2}$, written in terms of coefficients and regrouping terms we get 
\begin{equation}\label{gcd_sylvester}
\left(\frac{c^{(1)}_{n\%n_{1}}+c^{(1)}_{-n\%n_{1}}}{-2}\right)n_{2}+\left(\frac{c^{(2)}_{n\%n_{2}}+c^{(2)}_{-n\%n_{2}}}{-2}\right)n_{1}=1.
\end{equation}
The equations (\ref{gcd_recip}) and (\ref{gcd_sylvester}) depict the gcd relation between $n_{1}$ and $n_{2}$ through the coefficients. Similarly, for the case $k=3$, i.e., $
\frac{1}{(1-x^{n_{1}})(1-x^{n_{2}})(1-x^{n_{3}})}$ with $n_{1},n_{2}$ and $n_{3}$ being relatively prime. The coefficients are given by 
$$
c_{0}=\frac{1}{n_{1}n_{2}n_{3}},\quad c_{1}=\frac{1}{n_{1}n_{2}n_{3}}\left(f^{(n_{1})}_{1}(1)+f^{(n_{2})}_{1}(1)+f^{(n_{2})}_{1}(1)\right)=\frac{s-3}{2n_{1}n_{2}n_{3}},
$$
where $s=n_{1}+n_{2}+n_{3}$. The reciprocity theorem implies 
$$
c^{(n_{1})}_{-n\%n_{1}}n_{2}n_{3}+c^{(n_{2})}_{-n\%n_{2}}n_{1}n_{3}+c^{(n_{3})}_{-n\%n_{3}}n_{1}n_{2}=\frac{(n-1)(s-n-1)}{2},
$$
which again is a gcd relation between $\{n_{1}n_{2},n_{2}n_{3},n_{1}n_{3}\}$.

\subsection{Zero-Dimensional Generalized Fourier-Dedekind Sum}\label{0_dim}
In this section, we will prove two reciprocity results on the $0$-dimensional generalized Fourier-Dedekind sums. Technically speaking, these results here are `trivial' as we consider only one element $m$. Nevertheless, in our generalization we still have a $p(x)/(1-x)^{k}$ extra factor, which makes it a non-trivial case. The first result we prove follows directly from Theorem \ref{k_dim_recip_thm}. The second result is a `factorization' reciprocity that expresses the $0$-dimensional generalized Fourier-Dedekind sum of $m$ as a $(d(m)-1)$-dimensional sum corresponding to all the non-trivial factors of $m$.

\begin{thm}[$0$-Dimensional Reciprocity]\label{trivial_recip}
Given the generating function  
$$
F(x)=\frac{p(x)}{(1-x)^{k}(1-x^{m})},
$$
where $k\geq 0, m\geq 1, \textnormal{ deg }p(x)=d<m+k$. Then the equation
\begin{equation}\label{0_case} 
S_{0}(m)=p(0)-\frac{1}{m}\sum_{0\leq i+j\leq k}\frac{(-1)^{i}D^{i}p(1)}{i!}f_{j}^{(m)}(1)
\end{equation}
holds for all positive integers $m$. Further, if $p(1/x)=(-1)^{-\delta}x^{-d}p(x)$ holds, then
\begin{equation}\label{n_case} 
S_{n}(m) = -\frac{1}{m}\sum_{0\leq i+j \leq k}(-1)^{k-i-j}\frac{(-1)^{i}D^{i}p(1)}{i!}f^{(m)}_{j}(1)\begin{pmatrix}n-1 \\ k-i-j \end{pmatrix}
\end{equation}
hold for $1\leq n<m+k-d$, where
\begin{equation}\label{trivial_FDS}
S_{n}(m)=\frac{1}{m}\sum_{s=1}^{m-1}\frac{p(\xi^{s})\xi^{ns}}{(1-\xi^{s})^{k}}.
\end{equation}
\end{thm}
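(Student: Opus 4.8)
The strategy is to specialize the general machinery of Theorem~\ref{main_qpf} (equivalently, Lemma~\ref{pow_k}) to the case $k_{\text{dim}}=0$, i.e. to the rational function $F(x)=p(x)/((1-x)^{k}(1-x^{m}))$ with no $\Psi_{n_{j}}$ factors present, and then run the two standard coefficient-comparison arguments (the Zagier-type argument of Theorem~\ref{zagier_type} and the Rademacher-type argument of Theorem~\ref{k_dim_recip_thm}). First I would write $1-x^{m}=(1-x)\Psi_{m}(x)$ so that $F(x)=p(x)/((1-x)^{k+1}\Psi_{m}(x))$, expand $p(x)=\sum_{i}\frac{(-1)^{i}D^{i}p(1)}{i!}(1-x)^{i}$ by Taylor's theorem around $x=1$ (this is exactly the $p(x)=\sum a_{i}(1-x)^{i}$ substitution used before Theorem~\ref{main_qpf}, with $a_{i}=(-1)^{i}D^{i}p(1)/i!$), and apply Lemma~\ref{pow_k} together with the product formula in Lemma~\ref{lem_res}(2). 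The polynomial part collects into $\frac{1}{m}\sum_{0\le i+j\le k}a_{i}f_{j}^{(m)}(1)(1-x)^{i+j}$ divided by an appropriate power of $(1-x)$, while the $\Psi_{m}(x)$-term, by Lemma~\ref{eval_eval}, evaluates at $\xi^{s}=e^{2\pi i s/m}$ to $p(\xi^{s})/(1-\xi^{s})^{k}$ and hence feeds exactly the finite Fourier series \eqref{FFS}, producing the sum $S_{n}(m)$ in \eqref{trivial_FDS} as the periodic contribution to $a(-n)$ (resp. $a(0)$).

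\textbf{The constant-term identity \eqref{0_case}.} Once $F$ is decomposed, \eqref{an_term} specializes to $a(n)=\mathrm{poly}(n)+S_{-n}(m)$, where $\mathrm{poly}(n)=\frac{1}{m}\sum_{0\le i+j\le k}a_{i}f_{j}^{(m)}(1)\binom{n+\,(\text{shift})}{\,\cdot\,}$. Setting $n=0$, the binomial coefficients $\binom{i+j}{\,\cdot\,}$ with the running index force all terms with $i+j>0$ to vanish except through the single surviving evaluation, or more cleanly: $a(0)=p(0)$ directly from the power series of $F$, so $p(0)=\mathrm{poly}(0)+S_{0}(m)$, and reading off $\mathrm{poly}(0)=\frac{1}{m}\sum_{0\le i+j\le k}a_{i}f_{j}^{(m)}(1)=\frac{1}{m}\sum_{0\le i+j\le k}\frac{(-1)^{i}D^{i}p(1)}{i!}f_{j}^{(m)}(1)$ gives \eqref{0_case}. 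This is the direct analogue of the proof of Theorem~\ref{zagier_type}; the only real work is bookkeeping the $(1-x)$-powers so that exactly the terms with $i+j\le k$ appear.

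\textbf{The Rademacher-type identity \eqref{n_case}.} Assuming $p(1/x)=(-1)^{-\delta}x^{-d}p(x)$, a short computation with $F(1/x)$ gives $F(1/x)=(-1)^{\epsilon}x^{\gamma}F(x)$ for suitable $\epsilon,\gamma$ (here $\gamma=m+k-d$, up to sign conventions), hence $a(-n)=0$ for $1\le n<m+k-d$. Substituting $n\mapsto-n$ in $a(n)=\mathrm{poly}(n)+S_{-n}(m)$ and using the binomial reflection $(-1)^{l}\binom{-n+\lambda}{l}=\binom{n+l-1-\lambda}{l}$ of \eqref{comb_n}, exactly as in the proof of Theorem~\ref{k_dim_recip_thm}, converts $\mathrm{poly}(-n)$ into the stated alternating sum $\frac{1}{m}\sum_{0\le i+j\le k}(-1)^{k-i-j}\frac{(-1)^{i}D^{i}p(1)}{i!}f_{j}^{(m)}(1)\binom{n-1}{k-i-j}$, and $0=a(-n)=\mathrm{poly}(-n)+S_{n}(m)$ yields \eqref{n_case}.

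\textbf{Main obstacle.} No step is deep; the delicate part is purely combinatorial bookkeeping — tracking precisely which power of $(1-x)$ each term $a_{i}f_{j}^{(m)}(1)(1-x)^{i+j}$ sits over after applying Lemma~\ref{pow_k}, so that the polynomial part emerges as genuine binomial coefficients with the correct shift, and then verifying that under the reflection \eqref{comb_n} the index becomes exactly $k-i-j$ with sign $(-1)^{k-i-j}$. I would double-check the exponent/sign conventions ($\delta$ vs.\ $-\delta$, the exact value of $\gamma$) against the conventions already fixed before Theorem~\ref{k_dim_recip_thm}, since an off-by-one there is the only place this argument can go wrong.
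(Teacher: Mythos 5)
Your proposal is correct and follows essentially the same route as the paper's proof: rewrite $F$ as $p(x)/((1-x)^{k+1}\Psi_m(x))$, Taylor-expand $p$ about $x=1$, apply Lemma \ref{pow_k} to get the decomposition with polynomial part $\frac{1}{m}\sum_{0\le i+j\le k}b_i f_j^{(m)}(1)(1-x)^{-(k+1-i-j)}$, read off $a(n)=\mathrm{poly}(n)+S_{-n}(m)$ via the finite Fourier series, and then use $a(0)=p(0)$ for \eqref{0_case} and the reflection $F(1/x)=(-1)^{k+\delta+1}x^{m+k-d}F(x)$ together with the identity \eqref{comb_n} for \eqref{n_case}. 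The bookkeeping you flag (the index $k-i-j$ and sign $(-1)^{k-i-j}$ after reflection) does work out exactly as you describe.
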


A direct consequence of the above result is that the general trigonometric sum
$$
P(m)=\sum_{s=1}^{m-1}\frac{p(\xi^{s})}{(1-\xi^{s})^{k}}
$$ 
is a polynomial in $m$ provided $f^{(m)}_{k}(1)$ is a polynomial in $m$ (which indeed is, as shown in the next subsection). The question of being polynomial for the case $p(x)=1$ was posed by Duran (Problem E 3339 Amer. Math. Monthly, \cite{Duran}); Gessel \cite{Gessel}  gave a short proof for the expression to be a polynomial in $m$; in this work, we show by algebraic methods that $f^{(m)}_{k}(1)$ is a polynomial in $m$. Hence, providing a novel method to solve the problem. 

\begin{proof}[Proof of Theorem \ref{trivial_recip}]
Suppose the formal power series $F(x)=\sum^{\infty}_{n=0}a(n)x^{n}$. Considering the relation  
$$
F(1/x)=(-1)^{k+\delta+1}x^{m+k-d}F(x)
$$
and comparing the coefficients we have 
$$
a(-n)=\left\{\begin{array}{@{}l@{\thinspace}l}
       p(0) & \text{   if   } n=0 \\
       0 & \text{   if   } 1 \leq n \leq m+k-d-1 \\
      (-1)^{k+\delta+1}a(n+d-m-k)   & \text{   if   } m+k-d\leq n. \\
     \end{array}\right.
$$
Using the Taylor series expansion, one can write 
$$
p(x)=\sum^{k}_{j=0}b_{j}(1-x)^{j}=\sum_{j=0}^{k}\frac{(-1)^{j}D^{j}p(1)}{j!}(1-x)^{j},
$$ 
where $D^{j}p(x)$ is the $j^{th}$ derivative of $p(x)$. So, by Lemma \ref{lem_rem} and Lemma \ref{pow_k} we have the decomposition 
\begin{equation}\label{pow_k_gen}
\frac{p(x)}{(1-x)^{k+1}\Psi_{m}(x)} =\frac{1}{m}\sum_{0\leq i+j\leq k}\frac{b_{i}f^{(m)}_{j}(1)}{(1-x)^{k+1-i-j}}+\frac{(1-x)g_{k+1}(x)}{(1-x^{m})},
\end{equation}
where $g_{k+1}(x)=(p(x)f_{k+1}(x))\textnormal{ rem }\Psi_{m}(x)$. Then, by the finite Fourier series we have 
\begin{equation}\label{an_trivial}
a(n) =\frac{1}{m}\sum_{0\leq i+j\leq k}b_{i}f_{j}^{(m)}(1)\begin{pmatrix} n+k-i-j \\ n \end{pmatrix} +S_{-n}(m).
\end{equation}
The case $n=0$ can be obtained by substituting $a(0)=p(0)$ into (\ref{an_trivial}) and the identity (\ref{comb_n}) yields (\ref{n_case}) for the case $1\leq n<m+k-d$. 
\end{proof}

Let $m$ be a positive integer and $\{a_{1},\cdots, a_{d(m)-1}\}$ be the set of all divisors of $m$ excluding $1$, where $d(m)$ is the number of divisors of $m$. To obtain a reciprocity relation for $m$ and its divisors we investigate the partial fractions on both sides of the equation using Lemma \ref{pow_k} and Theorem \ref{main1},
\begin{equation}\label{think_twice}
\frac{p(x)}{(1-x)^{k}\Psi_{m}(x)}=\frac{p(x)}{(1-x)^{k}}\prod_{j=1}^{d(m)-1}\frac{1}{\Phi_{a_{j}}(x)}.
\end{equation}
Equating the corresponding periodic parts of the partial fractions in (\ref{think_twice}) we get   
\begin{equation}\label{periodic_part}
\frac{(1-x)f_{k}(x)}{1-x^{m}}=\sum^{d(m)-1}_{j=1}\frac{\Theta_{a_{j}}(x)g_{j}(x)}{1-x^{a_{j}}},
\end{equation}
where 
 \begin{eqnarray}
 \nonumber f_{k}(x)&=&\res{\frac{p(x)}{(1-x)^{k}}}{\Psi_{m}(x)} \textnormal{ and}\\
 \nonumber g_{j}(x)&=&\res{\frac{p(x)}{(1-x)^{k}\Phi_{a_{1}}(x)\cdots \widehat{\Phi_{a_{j}}(x)}\cdots \Phi_{a_{d(m)-1}}(x)}}{\Phi_{a_{j}}(x)}.
\end{eqnarray} 

By matching the $n^{th}$ term in the finite Fourier series on both sides of the equation (\ref{periodic_part}) for $\xi=e^{2\pi i/m}$ we get 
\begin{small}
\begin{equation}\label{think_twice_sums}
\frac{1}{m}\sum_{s=1}^{m-1}\frac{p(\xi^{s})\xi^{-ns}}{(1-\xi^{s})^{k}}=\sum_{j=1}^{m}\frac{1}{a_{j}}\sum_{\xi_{a_{j}} \in \Delta_{a_{j}}}\frac{p(\xi_{a_{j}})\Theta_{a_{j}}(\xi_{a_{j}})\xi_{a_{j}}^{-n}}{(1-\xi_{a_{j}})^{k}\Phi_{a_{1}}(\xi_{a_{j}})\cdots \widehat{\Phi_{a_{j}}(\xi_{a_{j}})}\cdots \Phi_{a_{m}}(\xi_{a_{j}})}
\end{equation}
\end{small}where $\Theta_{a}(x)$ refers to the inverse cyclotomic polynomial. In view of the right hand side we denote another generalized Fourier-Dedekind sum based on the cyclotomic polynomials for $b>0$ distinct from $d_{1},\cdots, d_{r}$:
\begin{equation}\label{cyclo_fds}
S^{\Phi}_{n}(d_{1},\cdots,d_{r}; b):=\frac{1}{b}\sum_{\xi\in \Delta_{b}}\frac{p(\xi)\Theta_{b}(\xi)\xi^{n}}{(1-\xi)^{m}\Phi_{d_{1}}(\xi)\cdots\Phi_{d_{r}}(\xi)}.
\end{equation}

Thus, by (\ref{think_twice_sums}), we have the generalized Fourier-Dedekind sum for a positive number expressed as a sum of generalized Fourier-Dedekind sums corresponding to its divisors. 
\begin{thm} 
For a positive integer $m$ and the divisors (excluding $1$) of m  $\{a_{1},\cdots,a_{d(m)-1}\}$, the equation 
$$
S_{-n}(m)=\sum_{j=1}^{d(m)-1}S^{\Phi}_{-n}(a_{1},\cdots,\widehat{a_{j}},\cdots,a_{d(m)-1};\ a_{j}).
$$
holds for all $n=0,1,2,\cdots$.
\end{thm}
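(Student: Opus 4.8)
The plan is to run the ``extended cover-up'' machinery on both representations of the rational function
$$
\frac{p(x)}{(1-x)^{k}\Psi_{m}(x)}=\frac{p(x)}{(1-x)^{k}}\prod_{j=1}^{d(m)-1}\frac{1}{\Phi_{a_{j}}(x)},
$$
and then match the purely periodic parts. On the left, Lemma \ref{pow_k} (applied with the numerator $p(x)$, exactly as in the proof of Theorem \ref{trivial_recip}) produces a term $\dfrac{(1-x)f_{k}(x)}{1-x^{m}}$ plus a genuine polynomial-in-$(1-x)$ part, where $f_{k}(x)=\res{p(x)/(1-x)^{k}}{\Psi_{m}(x)}$. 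On the right, Theorem \ref{main1} (or, more directly, the extended cover-up Theorem \ref{cover_up} applied to the mutually coprime factors $(1-x)^{k}$ and the $\Phi_{a_{j}}(x)$) gives, for each divisor $a_{j}$, a term $\dfrac{\Theta_{a_{j}}(x)g_{j}(x)}{1-x^{a_{j}}}$ together with its own polynomial part; here $g_{j}(x)=\res{p(x)\big/\bigl((1-x)^{k}\prod_{i\neq j}\Phi_{a_{i}}(x)\bigr)}{\Phi_{a_{j}}(x)}$ and the factor $\Theta_{a_{j}}(x)=\prod_{d\mid a_{j},\,d\neq a_{j}}\Phi_{d}(x)$ appears because $1-x^{a_{j}}=\Phi_{a_{j}}(x)\Theta_{a_{j}}(x)$, so that the denominator $\Phi_{a_{j}}(x)$ must be cleared to $1-x^{a_{j}}$. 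This is precisely equation (\ref{periodic_part}).

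The next step is to observe that in (\ref{periodic_part}) \emph{both} sides have all poles on the unit circle of order one (the left side has poles exactly at the $m$-th roots of unity other than $1$; the right side has poles at the $a_{j}$-th primitive roots of unity, and since the $a_{j}$ range over all divisors of $m$ except $1$, these are exactly the $m$-th roots of unity other than $1$). So the two sides, being proper rational functions with the same (simple) pole set, are equal iff their residues agree at each pole, equivalently iff their finite Fourier coefficients agree. Writing $\xi=e^{2\pi i/m}$ and extracting the $n$-th Fourier coefficient of each side via the expansion (\ref{FFS}) and Lemma \ref{eval_eval} yields (\ref{think_twice_sums}): the left side is $\frac1m\sum_{s=1}^{m-1}\dfrac{p(\xi^{s})\xi^{-ns}}{(1-\xi^{s})^{k}}=S_{-n}(m)$, and the right side is the sum over $j$ of $\dfrac1{a_{j}}\sum_{\zeta\in\Delta_{a_{j}}}\dfrac{p(\zeta)\Theta_{a_{j}}(\zeta)\zeta^{-n}}{(1-\zeta)^{k}\prod_{i\neq j}\Phi_{a_{i}}(\zeta)}$, which by the Definition (\ref{cyclo_fds}) of $S^{\Phi}$ is exactly $S^{\Phi}_{-n}(a_{1},\dots,\widehat{a_{j}},\dots,a_{d(m)-1};a_{j})$. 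Summing over $j$ gives the claimed identity for every $n\ge 0$.

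I expect the main obstacle to be bookkeeping rather than a conceptual difficulty: one must verify carefully that the two partial-fraction decompositions genuinely agree on the polynomial (non-periodic) parts so that the periodic parts can be equated in isolation, and that the decomposition from Theorem \ref{main1} applied to the product $\prod_j 1/\Phi_{a_j}(x)$ (together with the extra $(1-x)^{k}$ block) really does assemble into the stated form with the $\Theta_{a_j}$ numerators --- this uses the factorization $1-x^{a_j}=\Phi_{a_j}(x)\Theta_{a_j}(x)$ and Lemma \ref{lem_res}(3) to rewrite each $1/\Phi_{a_j}(x)$ term with denominator $1-x^{a_j}$. A secondary point needing care is that the Fourier inversion in (\ref{FFS}) is being applied simultaneously on two different ``period'' scales ($b=m$ on the left, $b=a_j$ on the right), so one should note that a function periodic with period $a_j\mid m$ is also periodic with period $m$ and that its $\Delta_{a_j}$-Fourier expansion is compatible with the $\Delta_m$-expansion of the whole right-hand side; this is a routine consequence of Lemma \ref{eval_eval} but worth stating. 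Once these identifications are in place, the theorem follows immediately, with no restriction on $n$, since (\ref{periodic_part}) is an identity of rational functions and hence holds coefficient-by-coefficient for all $n=0,1,2,\dots$.
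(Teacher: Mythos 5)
Your proposal follows essentially the same route as the paper: both form the identity $\frac{p(x)}{(1-x)^{k}\Psi_{m}(x)}=\frac{p(x)}{(1-x)^{k}}\prod_{j}\Phi_{a_{j}}(x)^{-1}$, decompose each side (Lemma \ref{pow_k} on the left, Theorem \ref{main1} on the right), equate the periodic parts to obtain (\ref{periodic_part}), and then read off the $n$-th finite Fourier coefficient of each side to get (\ref{think_twice_sums}) and hence the theorem. Your added observations about the simple-pole structure and the compatibility of the $a_{j}$-periodic and $m$-periodic expansions are correct elaborations of steps the paper leaves implicit, not a different argument.
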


\subsection{Computation of $f_{k}^{(m)}(1)$}\label{computation}
From Section \ref{qPFD} we observed that for the purpose of computing denumerants the values $f^{(m)}_{k}(1), k=0,1,2,\cdots,$ play a crucial role. As an application of the reciprocity theorem Theorem \ref{trivial_recip} we derive a formula for $f^{(m)}_{k}(1)$, where     
$$
f^{(m)}_{k}(x)= \left(-\frac{1}{m}x\Psi'_{m}(x)\right)^{k} \textnormal{ rem } \Psi_{m}(x).
$$
Towards this direction, we denote $g_{k}(x)=\left(-x\Psi'_{m}(x)\right)^{k} \textnormal{ rem } \Psi_{m}(x)$ (that is, $g_{k}(x)$ equals $f^{(m)}_{k}(x)$ without the $1/m^{k}$ factor) and show that $g_{k}(1)$ is a polynomial in $m$ with a factor $m^{k}$. Consequently, the $1/m^{k}$ in $f^{(m)}_{k}(1)$ cancels with the implicit $m^{k}$ factors in its numerator. For the purpose of proving this claim we use the well known fact that $m(m+1)/2$ is a factor of the sum of powers $\sigma_{n}(m)=\sum_{i=1}^{m}i^{n}$.  

\begin{lem}\label{mk_factor}
The functions $g_{k}(x)$ and $\left(xD\right)^{n}g_{k}(x)$ when evaluated at $x=1$ are polynomials in $m$ with zero as a root of multiplicity at least $k$.
\end{lem}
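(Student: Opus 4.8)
The plan is to move the whole computation onto the group of $m$-th roots of unity, where the remainder operation becomes transparent. Write $\zeta=e^{2\pi i/m}$ and let $G_{k}(x)$ be the reduction of $\bigl(-x\Psi_{m}'(x)\bigr)^{k}$ modulo $1-x^{m}$, so that $\deg G_{k}<m$; since $\deg\Psi_{m}(x)=m-1$, passing from $G_{k}$ to $g_{k}$ costs only one top-term reduction, $g_{k}(x)=G_{k}(x)-\bigl([x^{m-1}]G_{k}\bigr)\Psi_{m}(x)$, where $[x^{r}]$ denotes the coefficient of $x^{r}$. Since $(xD)^{j}x^{i}=i^{j}x^{i}$, one has $(xD)^{j}\Psi_{m}(x)\big|_{x=1}=\sum_{i=0}^{m-1}i^{j}$, which is $m$ for $j=0$ and is $\sigma_{j}(m-1):=\sum_{i=1}^{m-1}i^{j}$ for $j\ge 1$. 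So the lemma is reduced to proving that $G_{k}(1)-m\,[x^{m-1}]G_{k}$ (the case $n=0$) and, for $n\ge 1$, $(xD)^{n}G_{k}(x)\big|_{x=1}-\bigl([x^{m-1}]G_{k}\bigr)\sigma_{n}(m-1)$ are polynomials in $m$ divisible by $m^{k}$.

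First I would record the values of $G_{k}$ on the $m$-th roots of unity. Differentiating $(1-x)\Psi_{m}(x)=1-x^{m}$ yields $(1-x)\Psi_{m}'(x)=\Psi_{m}(x)-mx^{m-1}$, and evaluating at $\zeta^{j}$ with $1\le j\le m-1$ (where $\Psi_{m}(\zeta^{j})=0$ and $\zeta^{jm}=1$) gives $-\zeta^{j}\Psi_{m}'(\zeta^{j})=m/(1-\zeta^{j})$; hence $G_{k}(\zeta^{j})=\bigl(m/(1-\zeta^{j})\bigr)^{k}$, while $G_{k}(1)=\bigl(-\Psi_{m}'(1)\bigr)^{k}=\bigl(-\frac{1}{2}m(m-1)\bigr)^{k}$. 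The inverse discrete Fourier transform $[x^{r}]G_{k}=\frac{1}{m}\sum_{j=0}^{m-1}\zeta^{-jr}G_{k}(\zeta^{j})$, together with $\sum_{r=0}^{m-1}r^{n}\zeta^{-jr}=(xD)^{n}\Psi_{m}(x)\big|_{x=\zeta^{-j}}$, then writes each of the quantities above as a $\mathbb{Q}$-linear combination of the $j=0$ term, which visibly carries a factor $m^{k-1}$, and of root-of-unity sums of the shape $\sum_{j=1}^{m-1}(1-\zeta^{j})^{-k}\,(xD)^{n}\Psi_{m}(x)\big|_{x=\zeta^{-j}}$, again carrying $m^{k-1}$ in front.

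Next I would prove that the remaining pieces are polynomials in $m$. Iterating $(xD)$ on the identity $(x-1)\Psi_{m}(x)=x^{m}-1$ and solving the resulting recursion shows that $(xD)^{n}\Psi_{m}(x)\big|_{x=\zeta^{-j}}$ is $m$ times a polynomial in $m$ whose coefficients are fixed rational functions of $\zeta^{j}$ with poles only at $\zeta^{j}\in\{0,1\}$; after clearing these, the root-of-unity sums above become $\mathbb{Q}$-combinations, with coefficients polynomial in $m$, of sums $\sum_{j=1}^{m-1}R(\zeta^{j})$ where $R$ is a fixed rational function with poles only at $0$ and $1$. Each such sum is a polynomial in $m$: by partial fractions it decomposes into exponential sums $\sum_{j=1}^{m-1}\zeta^{aj}$ (each equal to $m-1$ or $-1$) and polar sums $\sum_{j=1}^{m-1}(\zeta^{j}-1)^{-c}$, and the polar sums are polynomials in $m$ because $\sum_{j=1}^{m-1}(x-\zeta^{j})^{-1}=\frac{mx^{m-1}}{x^{m}-1}-\frac{1}{x-1}$ is analytic at $x=1$ with the coefficients of its expansion in powers of $x-1$ polynomial in $m$ (built from the $\binom{m}{i}$). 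Thus $g_{k}(1)$ and $(xD)^{n}g_{k}(x)\big|_{x=1}$ are polynomials in $m$; counting the powers of $m$ --- the $m^{k-1}$ prefactor in each contribution, together with the one further factor $m$ coming from $\sigma_{n}(m-1)$ on the $j=0$ side and from the extra $m$ in $(xD)^{n}\Psi_{m}(x)\big|_{x=\zeta^{-j}}$ on the $j\ne 0$ side --- shows that both are divisible by $m^{k}$, and the case $n=0$ is immediate because $G_{k}(1)$ and $m\,[x^{m-1}]G_{k}$ are each divisible by $m^{k}$.

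The place where the classical divisibility $\frac{1}{2}m(m+1)\mid\sigma_{n}(m)$ is indispensable is exactly this last count: applied with $m-1$ in place of $m$ it gives $m\mid\sigma_{n}(m-1)$, which supplies the one remaining factor of $m$ both in the $j=0$ contribution and in the correction term $\bigl([x^{m-1}]G_{k}\bigr)\sigma_{n}(m-1)$; without it one obtains only divisibility by $m^{k-1}$. The main obstacle is conceptual: the operation `reduce modulo $\Psi_{m}$' is adapted to evaluation at the \emph{non-trivial} $m$-th roots of unity, and it is pinned down by the degree bound $\deg g_{k}<m-1$, which a bookkeeping induction on $k$ cannot exploit; replacing it by reduction modulo $1-x^{m}$ followed by a single top-term correction, and then invoking the discrete Fourier transform, is what makes the degree bound effective and reduces the lemma to manifestly polynomial-in-$m$ character sums. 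Checking that the $\mathbb{Z}[\zeta]$-valued intermediate sums collapse to rational polynomials in $m$ with $m$-adic valuation at least $k$ is the technical heart of the proof.
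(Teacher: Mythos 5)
Your argument is correct in outline, but it takes a genuinely different route from the paper. The paper proves the lemma by a self-contained induction on $k$: it writes $g_{k}(x)=\sum_{t}\alpha_{t}x^{t}$, multiplies by $-x\Psi_{m}'(x)$, applies the explicit reduction rule $x^{m-1}\mapsto -(1+x+\cdots+x^{m-2})$ modulo $\Psi_{m}(x)$, and tracks one new factor of $m$ at each step, the only arithmetic input being Faulhaber's divisibility $\frac{1}{2}m(m+1)\mid\sigma_{n}(m)$ --- no roots of unity appear anywhere. You instead pass to the reduction $G_{k}$ modulo $1-x^{m}$, correct by a single multiple of $\Psi_{m}$, and use the discrete Fourier transform together with $G_{k}(\zeta^{j})=m^{k}/(1-\zeta^{j})^{k}$, the $m$-divisibility of $(xD)^{n}\Psi_{m}|_{x=\zeta^{-j}}$, and the classical fact that the polar sums $\sum_{j=1}^{m-1}(1-\zeta^{j})^{-c}$ are polynomials in $m$. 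All of these steps check out (your sketch of the recursion for $(xD)^{n}\Psi_{m}$ at nontrivial roots of unity and of the generating-function proof of the polar-sum polynomiality can be completed without difficulty, and the factor count $m^{k-1}\cdot m=m^{k}$ is right in each of the three contributions). Two caveats are worth stating. First, your route essentially re-derives Gessel's analytic result on the Duran problem: Subsection 6.2 of the paper uses this very lemma precisely to give a ``novel,'' purely algebraic proof that $\sum_{s=1}^{m-1}p(\xi^{s})/(1-\xi^{s})^{k}$ is a polynomial in $m$, so although your argument is not logically circular (you supply an independent proof of the polar-sum fact), substituting it would largely dissolve the point of the paper's approach; on the other hand it buys you, for free, the closed form $g_{k}(1)=m^{k}\sum_{j}\bigl((1-\zeta^{j})^{-(k-1)}-(1-\zeta^{j})^{-k}\bigr)$ that the paper only obtains later, and it makes the origin of the factor $m^{k}$ completely transparent. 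Second, a small point of rigor: the exponential sums $\sum_{j=1}^{m-1}\zeta^{aj}$ arising from the polynomial part of your partial fractions equal $-1$ only once $m$ exceeds the fixed exponent $a$, so your identities are polynomial identities valid for all sufficiently large $m$; this suffices for the divisibility statement but should be said.
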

\begin{proof}
We prove by induction on $k$. The result holds for $k=1$ case. Indeed, by a direct calculation we have 
$$
g_{1}(x)=x^{m-2}+\cdots+(m-2)x+(m-1)
$$
which implies $g_{1}(1)=m(m-1)/2$ and
$$
\left(xD\right)^{n}g_{1}(x)=(m-2)^{n}\cdot 1 x^{m-2}+(m-3)^{n}\cdot 2 x^{m-3}+\cdots+1^{n}\cdot(m-2)x 
$$
implies
\begin{eqnarray}
\nonumber \left.\left(xD\right)^{n}g_{1}(x)\right\vert_{x=1}&=&(m-2)^{n}\cdot 1+(m-3)^{n}\cdot 2+\cdots+1^{n}\cdot (m-2)\\
\nonumber &=& \sum_{i=1}^{m-1}i^{n}(m-i)-\sigma_{n}(m-1)\\
\nonumber &=& (m-1)\sigma_{n}(m-1)-\sigma_{n+1}(m-1),
\end{eqnarray}
which is divisible by $m$.

For the induction step, suppose $g_{k}(x)=\alpha_{m-2}x^{m-2}+\cdots+\alpha_{1}x+\alpha_{0}$, where the coefficients $\alpha_{0},\cdots,\alpha_{m-2}$ are dependent on $m$ and $k$, and assume $
g_{k}(1)$ and $\left.\left(xD\right)^{n}g_{k}(x)\right\vert_{x=1}=\sum_{t=0}^{m-2}t^{n}\alpha_{t}$ have $m^{k}$ as a factor for all positive integers $n$.

Consider the $(k+1)$ case:
\begin{eqnarray}
\nonumber g_{k+1}(x)&=& \left(g_{k}(x)(-x\Psi'_{m}(x))\right) \textnormal{ rem }\Psi_{m}(x)\\
\nonumber &=& \left(g_{k}(x)(x^{m-2}+\cdots+(m-2)x+(m-1))\right)\textnormal{ rem }\Psi_{m}(x) \\
\nonumber &=& \left(\sum_{i,j=0,0}^{m-2,m-2}(m-1-j)\alpha_{i}x^{i+j}\right) \textnormal{ rem } \Psi_{m}(x)\\
\nonumber &=& \left(\sum_{\stackrel{i,j=0,0}{(i+j)\%m\neq(m-1)}}^{m-2,m-2}\alpha_{i}(m-1-j)x^{(i+j)\% m}\right)\\
\nonumber & & \quad +   \left(\sum_{\stackrel{i,j=0,0}{(i+j)\%m=(m-1)}}^{m-2,m-2}\alpha_{i}(m-1-j)(x^{m-1}-(1+x\cdots+x^{m-1}))\right),
\end{eqnarray}
here we substitute for $x^{m-1}$ by $(x^{m-1}-(1+\cdots+x^{m-1}))$ using the rule (\ref{rem_psi}). Simplifying we obtain
$$
g_{k+1}(x)=\sum_{i,j=0,0}^{m-2,m-2}\alpha_{i}(m-1-j)x^{(i+j)\% m}-(1+x+\cdots+x^{m-1})\sum_{t=1}^{m-2}t\alpha_{t}.
$$
and substituting $x=1$ 
\begin{eqnarray}
\nonumber g_{k+1}(x) \nonumber &=&\sum_{i,j=0,0}^{m-2,m-2}\alpha_{i}(m-1-j)-m\sum_{t=1}^{m-2}t\alpha_{t}\\
\nonumber &=&m\left(\sum_{i=0}^{m-2}\frac{m-1}{2}\alpha_{i}-\sum_{t=1}^{m-2}t\alpha_{t}\right)= m^{k+1}\times (\textnormal{polynomial in } m),
\end{eqnarray}
where the last statement holds by the induction hypothesis. 

Now we show that $\left.\left(xD\right)^{n}g_{k+1}(x)\right\vert_{x=1}$ also has $m^{k+1}$ as a factor. Observe that  
$
\left.\left(xD\right)^{n}g_{k}(x)\right\vert_{x=1}= \sum_{t=0}^{m-2}t^{n}\alpha_{t}
$
and 
\begin{eqnarray}
\nonumber \left.\left(xD\right)^{n}g_{k+1}(x)\right\vert_{x=1}&=&\sum_{t=0}^{m-2}\left(\sum_{i=1}^{t-2}i^{n}(t-1-i)+\sum_{i=k}^{m-1}i^{n}(m-1+t-i)\right)\alpha_{t}\\
\label{gk_1} & & \quad\quad\quad\quad\quad\quad + \sigma_{n}(m-1)\sum_{t=0}^{m-2}t^{n}\alpha_{t}
\end{eqnarray}

Fortunately, we don't need to determine a formula for the summations; instead, we just need to show that $m$ is a factor.  The inner sum in the first term of (\ref{gk_1}) can be simplified to
\begin{eqnarray}
\nonumber (m-1)t^{n}+\cdots+t(m-1)^{n}&=&\sum_{i=t}^{m-1}i^{n}(m-1+t-i)\\
\nonumber &=& (m-1+t)\left\{\sigma_{n}(m-1)-\sigma_{n+1}(t-1)\right\}\\
\nonumber & & \quad\quad\quad\quad -\left\{\sigma_{n+1}(m-1)-\sigma_{n+1}(t-1)\right\}.
\end{eqnarray}
Furthermore, setting $m=0$ we get 
$-(t-1)\sigma_{n+1}(t-1)+\sigma_{n+1}(t-1).$ Also, the second term is 
\begin{eqnarray}
\nonumber (t-2)\cdot 1^{n}+\cdots+1\cdot (t-2)^{n}&=&\sum_{i=1}^{t-1}i^{n}(t-i)-\sigma_{n}(t-1)\\
\nonumber &=&(t-1)\sigma_{n}(t-1)-\sigma_{n+1}(t-1)
\end{eqnarray}
Thus, the factor $(m-1)t^{n}+\cdots+t(m-1)^{n}+(t-2)\cdot 1^{n}+\cdots+1\cdot (t-2)^{n}$ vanishes when $m=0$. Therefore, the equation (\ref{gk_1}) is of the form $\left.\left(xD\right)^{n}g_{k}(x)\right\vert_{x=1}=m\left(\sum_{t=0}^{m-2}q(m,t)\alpha_{t}\right),$ where $q(m,t)$ is a polynomial in $m$ and $t$. Hence, by the induction hypothesis $\left.\left(xD\right)^{n}g_{k+1}(x)\right\vert_{x=1}$ has $m^{k+1}$ as a factor. 
\end{proof}

\begin{thm}
The sum
$$
P(m)=\sum_{s=1}^{m-1}\frac{p(\xi^{s})}{(1-\xi^{s})^{k}}
$$ 
is a polynomial in $m$, for $\xi=e^{2\pi i/m}$ and for any polynomial $p(x)\in \mathbb{Q}[x]$ with $\textnormal{deg } p(x)<k+m-1$.
\end{thm}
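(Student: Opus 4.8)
The plan is to read $P(m)$ off from the $0$-dimensional reciprocity law and then verify that each ingredient of the resulting closed form is a polynomial in $m$. The first step is the elementary identity
$$
P(m)=\sum_{s=1}^{m-1}\frac{p(\xi^{s})}{(1-\xi^{s})^{k}}=m\,S_{0}(m),
$$
where $S_{0}(m)$ is the $0$-dimensional generalized Fourier--Dedekind sum (\ref{trivial_FDS}) attached to $F(x)=p(x)/\big((1-x)^{k}(1-x^{m})\big)$ at $n=0$. Since $\deg p(x)<k+m-1<k+m$, Theorem \ref{trivial_recip} applies; multiplying its identity (\ref{0_case}) by $m$ gives
$$
P(m)=m\,p(0)-\sum_{0\le i+j\le k}\frac{(-1)^{i}D^{i}p(1)}{i!}\,f_{j}^{(m)}(1).
$$
The coefficients $(-1)^{i}D^{i}p(1)/i!$ depend only on $p$; the index set $\{(i,j):0\le i+j\le k\}$ is finite and independent of $m$ (it forces $i\le k$ and $j\le k$); and $m\,p(0)$ is patently a polynomial in $m$. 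Thus the theorem reduces to the single assertion that $f_{j}^{(m)}(1)$ is a polynomial in $m$ for each fixed $0\le j\le k$.

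To establish that assertion I would invoke Lemma \ref{mk_factor}. Writing $f_{j}^{(m)}(x)=\left(-\frac{1}{m}x\Psi_{m}'(x)\right)^{j}\textnormal{ rem }\Psi_{m}(x)=m^{-j}g_{j}(x)$ with $g_{j}(x)=\left(-x\Psi_{m}'(x)\right)^{j}\textnormal{ rem }\Psi_{m}(x)$, Lemma \ref{mk_factor} says precisely that $g_{j}(1)$ is a polynomial in $m$ having $0$ as a root of multiplicity at least $j$, i.e. $m^{j}$ divides $g_{j}(1)$ in $\mathbb{Q}[m]$. Hence $f_{j}^{(m)}(1)=g_{j}(1)/m^{j}$ is again a polynomial in $m$, and plugging this back in exhibits $P(m)$ as a finite $\mathbb{Q}$-linear combination, with $m$-independent coefficients, of polynomials in $m$; so $P(m)$ is a polynomial in $m$. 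As usual this should be understood as an identity of polynomials valid for every $m$ large enough that $\deg p<k+m-1$, which for fixed $p$ and $k$ excludes only finitely many small $m$.

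The argument is short because the heavy lifting has already been done: the partial-fraction and finite-Fourier analysis is packaged into Theorem \ref{trivial_recip}, and the crucial divisibility into Lemma \ref{mk_factor}. The one genuinely delicate point, and the one I would highlight, is the cancellation of the prefactor $m^{-j}$ in $f_{j}^{(m)}(1)$ against the hidden factor $m^{j}$ of $g_{j}(1)$; without the multiplicity statement in Lemma \ref{mk_factor} one would only be able to conclude that $m\,P(m)$---not $P(m)$ itself---is a polynomial in $m$. Everything else is bookkeeping about which indices occur in the sum.
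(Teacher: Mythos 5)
Your proposal is correct and follows essentially the same route as the paper: reduce $P(m)=m\,S_{0}(m)$ via the $0$-dimensional reciprocity identity (\ref{0_case}) to the claim that each $f_{j}^{(m)}(1)$ is a polynomial in $m$, and then obtain that from $f_{j}^{(m)}(1)=m^{-j}g_{j}(1)$ together with the divisibility in Lemma \ref{mk_factor}. You have merely spelled out the bookkeeping (the finite index set, the $m$-independence of the Taylor coefficients of $p$, and the cancellation of $m^{-j}$) that the paper's one-line proof leaves implicit.
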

\begin{proof}
From the equation (\ref{0_case}) it suffices to only show that $f^{(m)}_{k}(1)$ is a polynomial in $m$, which in turn follows by $f^{(m)}_{k}(1)=\frac{1}{m^{k}}g_{k}(1)$ and Lemma \ref{mk_factor}. 
\end{proof}

By taking a finite Fourier series of the equation (\ref{pf_k}) in Lemma \ref{pf_k}  
$$
\frac{1}{(1-x)^{k+1}\Psi_{m}(x)}=\frac{1}{m}\sum^{k}_{j=0}\frac{f^{(m)}_{j}(1)}{(1-x)^{k+1-j}}+\frac{(1-x)f^{(m)}_{k+1}(x)}{(1-x^{m})}.
$$
and considering only the constant term in its formal power series we have 
\begin{equation}\label{sigma_k}
1=\frac{1}{m}\sum_{j=0}^{k}f^{(m)}_{j}(1)+\frac{1}{m}\sum^{m-1}_{j=1}\frac{1}{(1-\xi^{j})^{k}}.
\end{equation}
Hence, we deduce 
$$
f^{(m)}_{k}(1)=\sum_{j=1}^{m-1}\left(\frac{1}{(1-\xi^{j})^{k-1}}-\frac{1}{(1-\xi^{j})^{k}}\right).
$$
Gessel \cite{Gessel} gave an explicit formula for $\vartheta_{k}(m)=(-1)^{k}\sum \frac{1}{(1-\xi^{j})^{k}}$ using the Bernoulli numbers and the Stirling numbers of first kind. Using this notation we can write  
$$
f^{(m)}_{k}(1)=(-1)^{k-1}\vartheta_{k-1}(m)-(-1)^{k}\vartheta_{k}(m).
$$
and by the formula given in \cite{Gessel}
\begin{equation}\label{gessel}
(-1)^{k}\vartheta_{k}(n)=-\frac{n-1}{2}+\frac{1}{(k-1)!}\sum^{k}_{j=2}(-1)^{j}\textnormal{stirl}(k,j)\frac{B_{j}}{j}(n^{j}-1).
\end{equation}

\begin{thm}\label{fjm_polynomial}
\begin{equation}\label{fjm}
f^{(m)}_{k}(1)= \left\{\begin{array}{@{}l@{\thinspace}l}
       \frac{m-1}{2} & \text{   if   } k=1 \\
       \frac{1}{(k-1)!}\sum^{k}_{j=2}\frac{B_{j}}{j}\textnormal{stirl}(k-1,j-1)(m^{j}-1)  & \text{   if   } k\geq 2. \\
     \end{array}\right.
\end{equation}
\end{thm}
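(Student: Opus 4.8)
The plan is to feed Gessel's explicit evaluation (\ref{gessel}) into the identity $f^{(m)}_{k}(1)=(-1)^{k-1}\vartheta_{k-1}(m)-(-1)^{k}\vartheta_{k}(m)$ derived just above the theorem, and then to collapse the resulting difference of two Stirling-number sums into a single sum using the Pascal-type recurrence for the Stirling numbers of the first kind. Thus the whole argument is: substitute, cancel the linear-in-$m$ parts, and reindex.

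First I would dispose of the base case $k=1$ on its own. From $f^{(m)}_{1}(1)=\sum_{j=1}^{m-1}\bigl((1-\xi^{j})^{0}-(1-\xi^{j})^{-1}\bigr)=(m-1)-\tfrac{m-1}{2}=\tfrac{m-1}{2}$ (equivalently, from $g_{1}(x)=x^{m-2}+\cdots+(m-2)x+(m-1)$ in the proof of Lemma \ref{mk_factor}, so that $f^{(m)}_{1}(1)=g_{1}(1)/m=\tfrac{m(m-1)}{2}\cdot\tfrac1m$), we get the first line of (\ref{fjm}). I would also check $k=2$ by hand against the same direct evaluation, since this is the first case in which (\ref{gessel}) is applied with a nonempty sum and it is a convenient sanity check on signs.

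For $k\ge 2$, apply (\ref{gessel}) once with index $k$ and once with index $k-1$. Each application contributes a linear term $-\tfrac{m-1}{2}$; since these enter $f^{(m)}_{k}(1)=(-1)^{k-1}\vartheta_{k-1}(m)-(-1)^{k}\vartheta_{k}(m)$ with opposite signs they cancel, and one is left with
$$
f^{(m)}_{k}(1) = \sum_{j=2}^{k}\left(\frac{\textnormal{stirl}(k-1,j)}{(k-2)!} - \frac{\textnormal{stirl}(k,j)}{(k-1)!}\right)(-1)^{j}\frac{B_{j}}{j}(m^{j}-1),
$$
where the first sum has been extended from $j\le k-1$ to $j\le k$ using $\textnormal{stirl}(k-1,k)=0$. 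The statement (\ref{fjm}) then follows as soon as one verifies, term by term, that the parenthesised coefficient equals $\textnormal{stirl}(k-1,j-1)/(k-1)!$ (with the sign carried by the $(-1)^{j}$ factor matching the one in (\ref{fjm})). This is precisely the first-kind Stirling recurrence $\textnormal{stirl}(k,j)=\textnormal{stirl}(k-1,j-1)\pm(k-1)\,\textnormal{stirl}(k-1,j)$ after dividing through by $(k-1)!$; the top term $j=k$ comes out right because $\textnormal{stirl}(k-1,k-1)=1$, and the lower index of summation stays at $j=2$ because among the odd indices only $B_{1}$ survives and it never enters (it has already been absorbed into the $-\tfrac{m-1}{2}$ that cancelled).

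The step I expect to be the real work is this last reindexing: one must first fix the sign convention for $\textnormal{stirl}$ that is consistent with the form of (\ref{gessel}) and with the $(-1)^{j}$ weights, and then check that the factorial prefactors $1/(k-2)!$ and $1/(k-1)!$ interlock correctly with the multiplier $(k-1)$ in the Stirling recurrence. A secondary point requiring care is the degenerate small-$k$ behaviour (the empty $\vartheta_{1}$-sum when $k=2$, and the isolated $k=1$ case), which is why I would keep those cases outside the general computation and pin them down directly.
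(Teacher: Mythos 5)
Your proposal is correct and follows essentially the same route as the paper: the paper's own proof handles $k=1$ by direct computation and, for $k\geq 2$, substitutes (\ref{gessel}) into $f^{(m)}_{k}(1)=(-1)^{k-1}\vartheta_{k-1}(m)-(-1)^{k}\vartheta_{k}(m)$ and collapses the two sums via the Stirling recurrence $\textnormal{stirl}(k-1,j-1)=\textnormal{stirl}(k,j)-(k-1)\,\textnormal{stirl}(k-1,j)$, exactly as you describe. Your writeup is in fact more careful than the paper's two-line sketch, in particular in flagging the cancellation of the $-\tfrac{m-1}{2}$ terms and the need to fix the Stirling sign convention so that the $(-1)^{j}$ weights disappear from the final formula.
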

\begin{proof} 
We can perform a direct computation for $k=1$ and for $k\geq 2$ we use the identity
$$
\textnormal{stirl}(k-i,j-1)=\textnormal{stirl}(k,j)-(k-1)\textnormal{stirl}(k-1,j). 
$$
and the equation (\ref{gessel}) we obtain the formula. 
\end{proof}

It is interesting to note that (\ref{fjm}) is associated with the `degenerate Bernoulli number' \cite{Gessel}. Moreover, by a direct computation one can observe that $f_{k}^{(2)}(1)=1/2^{k}$ and as $B_{2i+1}=0$ for $i\geq 1$ the structure of $f_{k}^{(m)}(1)$ is   
$$
f^{(m)}_{k}(1)=\frac{(m^{2}-1)g(m^{2})}{3\cdot2^{k}g(4)}
$$
for some polynomial $g(x)$, of degree $\leq \frac{k-2}{2}$, and an even function.

We digress a bit and perform a similar analysis of the main term appearing in Lemma \ref{cyclo_pow_k}, namely $h^{(a)}_{j}(1)$. To obtain a computational formula for $h_{j}^{(a)}(1)$ we perform a Fourier Analysis on
$$
\frac{h^{(a)}_{k}(x)}{\Phi_{a}(x)}=\frac{\Theta_{a}(x)h^{(a)}_{k}(x)}{1-x^{a}}
$$
to obtain the $n^{th}$ term in the formal power series expansion is 
$$
\frac{1}{a}\sum^{a-1}_{j=0}\Theta_{a}(\xi)h^{(a)}_{k}(\xi)\xi^{-jn}=\frac{1}{a}\sum_{\eta\in \Delta_{a}}\frac{\Theta_{a}(\eta)}{(1-\eta)^{k}}\eta^{-n}
$$
with $\xi=e^{2\pi i/a}$ and we used $\Theta_{a}(\eta)=0$ for $\eta\notin \Delta_{a}$ and $h_{k}^{(a)}(\xi)=1/(1-\xi)^{k}$.
\\
Comparing the $n=0$ terms in the power series of both sides of the equation  (\ref{pf_cylco1}) we get 
$$
\Theta_{a}(0)=\frac{1}{\Phi_{a}(1)}\sum^{k-1}_{j=0}h_{j}^{(a)}(1)+\frac{1}{a}\sum_{\eta\in \Delta_{a}}\frac{\Theta_{a}(\eta)}{(1-\eta)^{k}}
$$
So, we can compute the value of $h_{k}^{(a)}(1)$ as a trigonometric sum  
$$
h^{(a)}_{k}(1)=\frac{\Phi_{a}(1)}{a}\left(\sum_{\eta\in \Delta_{a}}\frac{\Theta_{a}(\eta)}{(1-\eta)^{k}}-\sum_{\eta\in \Delta_{a}}\frac{\Theta_{a}(\eta)}{(1-\eta)^{k+1}}\right).
$$
To the best of our knowledge, there is no known explicit formula for the Ramanujan like sum
$$
\tau_{k}(a)=\sum_{\eta\in \Delta_{a}}\frac{\Theta_{a}(\eta)}{(1-\eta)^{k}},
$$
where $\Theta_{a}(x)$ is the inverse cyclotomic polynomial and $\Delta_{a}$ is the set of all $a^{th}$ primitive roots of unity. 

Returning to $f^{(m)}_{k}(1)$, the computation of the $q$-partial fractions could be done easily with the polynomials $f^{(m)}_{k}(1)$. In fact, for a $k$-factor $q$-partial fraction we need $f_{j}^{(m)}(1)$ for $0\leq j\leq (k-2)$. We list a few polynomials (computed in SageMath) above. 
%
%\begin{figure}[h]
%\centering 
%\includegraphics[scale=0.5]{table}
%%\caption{Scheme for Reciprocity Theorem}
%\end{figure}

\begin{table}
\hspace{-1cm}
\begin{tabular}{|p{0.3cm}|p{12cm}|}

\hline

$k$ & \hspace{4cm}$f^{(m)}_{k}(1)$ \\ \hline

$0$ & $1$ \\ [0.1cm]

$1$ & $\frac{m-1}{2}$ \\ [0.1cm]

$2$ & $\frac{m^{2}-1}{12}$ \\[0.1cm]

$3$ & $\frac{m^{2}-1}{24}$ \\[0.1cm]

$4$ & $-\frac{(m^{2}-1)(m^{2}-19)}{720}$ \\[0.1cm]

$5$ & $-\frac{(m^{2}-1)(m^{2}-9)}{480}$ \\[0.1cm]

$6$ & $\frac{(m^{2}-1)(2m^{4}-145m^{2}+863)}{60480}$ \\[0.1cm]

$7$ & $\frac{(m^{2}-1)(m^{2}-25)(2m^{2}-11)}{24192}$ \\[0.1cm]

$8$ & $  -\frac{(3m^6 - 497m^4 + 9247m^2 - 33953)(m^2 - 1)}{3628800}$ \\[0.1cm]

$9$ & $ -\frac{(3m^4 - 50m^2 + 167)(m^2 - 1)(m^2 - 49)}{1036800}$\\[0.1cm]

$10$ & $\frac{(10m^8 - 2993m^6 + 114597m^4 - 1184767m^2 + 3250433)(m^2 - 1)}{479001600}$ \\[0.1cm]

$11$ &  $\frac{(2m^4 - 49m^2 + 173)(m^2 - 1)(m^2 - 9)(m^2 - 81)}{21288960}$\\[0.1cm]

$12$ & $-\frac{(1382m^{10} - 653818m^8 + 42418211m^6 - 845983589m^4 + 6117468907m^2 - 13695779093)(m^2 - 1)}{2615348736000}$\\ [0.1cm]

\hline
\end{tabular}
\end{table}

\section*{Dedication}
\begin{small}
The author dedicates the work to his spiritual master Bhagawan Sri Sathya Sai Baba. 
\end{small}

% -----------------------------------------------------------
\bibliographystyle{amsplain}

\begin{thebibliography}{999}
\bibitem{Alfonsin} Alfons\'{i}n, J. L., The Diophantine Frobenius Problem, Oxford Lecture Series in Mathematics and its Applications 30, Oxford University Press Inc., New York, (2005). 
\bibitem{Beck1} Beck, M., Diaz, R., and Robins, S., The Frobenius problem, rational polytopes, and Fourier–Dedekind sums, J. Number Theory 96 (2002), no. 1, 1–21.
\bibitem{Beck} Beck, M., and Robins, S., Computing the continuous discretely: Integer-point enumeration in polyhedra. Undergraduate Texts in Mathematics. Springer, New York, (2007).
\bibitem{Carlitz} Carlitz, L., Some theorems on generalized Dedekind sums, Pacific J. Math. 3 (1953), 513–522.
\bibitem{Erdos} Erd\H{o}s, P., Graham, R. L., On a linear Diophantine problem of Frobenius, Acta Arithmetica, 21 (1972), 399-408.
\bibitem{Duran}A. J. Duran, A sequence of polynomials related to roots of unity, Problem E 3339, Solution by R. J. Chapman and R. W. K. Odoni, Amer. Math. Monthly 98 (1991), 269-271.
\bibitem{Gessel} Gessel, I., Generating functions and generalized Dedekind sums, Electron. J. Combin. 4 (1997), no. 2, Research Paper 11, 17 pp.
\bibitem{Kannan} Kannan, R., Lattice translates of a polytope and the Frobenius problem, Combinatorica 12(2) (1992), 161-177.
\bibitem{Komatsu}Komatsu, T., On the number of solutions of the Diophantine equation of Frobenius - General case, Mathematical Communications 8(2003), 195-206.
\bibitem{Laudano} Laudano, F., A generalization of the remainder theorem and factor theorem, International Journal of Mathematical Education in Science and Technology, (2018).
\bibitem{Loren} Lorenzini, D., An invitation to arithmetic geometry, Graduate Studies in Mathematics, Vol.9, AMS, (1996).
\bibitem{Moree} Moree, P., Inverse cyclotomic polynomials, Journal of Number Theory, 129, 667-680 (2009).
\bibitem{munagi2} Munagi, A., The Rademacher conjecture and $q$-partial fractions, The Ramanujan Journal (2008), 15: 339-347.
\bibitem{Tuskerman} Tsukerman E., Fourier-Dedekind sums and an extension of Rademacher reciprocity, The Ramanujan Journal, 37, (2015) 421-460.   
\bibitem{UK_LS} Uday Kiran, N., Sampath, L.,  Extended Cover-Up Method and Generalizations of the Fourier-Dedekind Sum to Finite Fields, (under preparation).
\bibitem{Zagier} Zagier, D., Higher dimensional Dedekind sums, Math. Ann. 202 (1973), 149 - 172.
\end{thebibliography}

\end{document}